\newcommand{\EE}{\mathbb{E}}
\newcommand{\GG}{\mathbb{G}}
\newcommand{\NN}{\mathbb{N}}
\newcommand{\ZZ}{\mathbb{Z}}
\newcommand{\PP}{\mathbb{P}}
\newcommand{\cE}{\mathcal{E}}
\newcommand{\cG}{\mathcal{G}}
\newcommand{\cH}{\mathcal{H}}
\newcommand{\cO}{\mathcal{O}}
\newcommand{\cP}{\mathcal{P}}
\newcommand{\cS}{\mathcal{S}}
\newcommand{\cT}{\mathcal{T}}
\newcommand{\cU}{\mathcal{U}}
\newcommand{\fA}{\mathfrak{A}}
\newcommand{\fE}{\mathfrak{E}}
\newcommand{\fe}{\mathfrak{e}}
\newcommand{\fg}{\mathfrak{g}}
\newcommand{\fh}{\mathfrak{h}}
\newcommand{\fK}{\mathfrak{K}}
\newcommand{\fM}{\mathfrak{M}}
\newcommand{\fn}{\mathfrak{n}}
\newcommand{\fsl}{\mathfrak{sl}}
\newcommand{\fu}{\mathfrak{u}}
\newcommand{\dact}{\boldsymbol{.}}
\newcommand{\lra}{\longrightarrow}
\DeclareMathOperator{\aurk}{rk_{au}}
\DeclareMathOperator{\Aut}{Aut}
\DeclareMathOperator{\Char}{char}
\DeclareMathOperator{\cx}{cx}
\DeclareMathOperator{\EIP}{EIP}
\DeclareMathOperator{\EKP}{EKP}
\DeclareMathOperator{\End}{End}
\DeclareMathOperator{\ev}{ev}
\DeclareMathOperator{\GL}{GL}
\DeclareMathOperator{\Gr}{Gr}
\DeclareMathOperator{\Hom}{Hom}
\DeclareMathOperator{\im}{im}
\DeclareMathOperator{\id}{id}
\DeclareMathOperator{\Jt}{Jt}
\DeclareMathOperator{\Lie}{Lie}
\DeclareMathOperator{\Mat}{Mat}
\DeclareMathOperator{\modd}{mod}
\DeclareMathOperator{\msim}{\mathsf{im}}
\DeclareMathOperator{\msIm}{\mathsf{Im}}
\DeclareMathOperator{\PGL}{PGL}
\DeclareMathOperator{\Pic}{Pic}
\DeclareMathOperator{\mspl}{\mathsf{pl}}
\DeclareMathOperator{\Pt}{Pt}
\DeclareMathOperator{\Pp}{P}
\DeclareMathOperator{\pr}{pr}
\DeclareMathOperator{\prk}{rk_{triv}}
\DeclareMathOperator{\Rad}{Rad}
\DeclareMathOperator{\rk}{rk}
\DeclareMathOperator{\Spec}{Spec}
\DeclareMathOperator{\Soc}{Soc}
\DeclareMathOperator{\tr}{tr}
\DeclareMathOperator{\Proj}{Proj}
\numberwithin{equation}{section}
\newtheorem{Theorem}{Theorem}[section]
\newtheorem{Lemma}[Theorem]{Lemma}
\newtheorem{Corollary}[Theorem]{Corollary}
\newtheorem{Proposition}[Theorem]{Proposition}
\theoremstyle{Theorem}
\newtheorem{Thm}{Theorem}[subsection]
\newtheorem{Lem}[Thm]{Lemma}
\newtheorem{Prop}[Thm]{Proposition}
\newtheorem{Cor}[Thm]{Corollary}
\newtheorem*{thm*}{Theorem A}
\newtheorem*{thm**}{Theorem B}
\theoremstyle{remark}
\newtheorem*{Remark}{Remark}
\newtheorem*{Remarks}{Remarks}
\newtheorem*{Definition}{Definition}
\newtheorem*{Example}{Example}
\newtheorem*{Examples}{Examples}
\numberwithin{equation}{section}
\begin{document}

\title{Representations of Finite Group Schemes and Morphisms of Projective Varieties}

\author{Rolf Farnsteiner}

\address[R. Farnsteiner]{Mathematisches Seminar, Christian-Albrechts-Universit\"at zu Kiel, Ludewig-Meyn-Str. 4, 24098 Kiel, Germany}
\email{rolf@math.uni-kiel.de}
\thanks{Supported by the D.F.G. priority program SPP1388 `Darstellungstheorie'.}
\date{\today}

\makeatletter
\makeatother

\subjclass[2010]{Primary 14L15,16G10,17B50}

\begin{abstract} Given a finite group scheme $\cG$ over an algebraically closed field $k$ of characteristic $\Char(k)=p>0$, we introduce new invariants for a $\cG$-module $M$ by associating 
certain morphisms $\deg^j_M : U_M \lra \Gr_d(M) \ \ (1\!\le\!j\!\le\! p\!-\!1)$ to $M$ that take values in Grassmannians of $M$. These maps are studied for two classes of finite algebraic groups, 
infinitesimal group schemes and elementary abelian group schemes. The maps associated to the so-called modules of constant $j$-rank have a well-defined degree ranging between $0$ and 
$j\rk^j(M)$, where $\rk^j(M)$ is the generic $j$-rank  of $M$. The extreme values are attained when the module $M$ has the equal images property or the equal kernels property. We establish 
a formula linking the $j$-degrees of $M$ and its dual $M^\ast$. For a self-dual module $M$ of constant Jordan type this provides information concerning the indecomposable constituents of the 
pull-back $\alpha^\ast(M)$ of $M$ along a $p$-point $\alpha : k[X]/(X^p) \lra k\cG$.  \end{abstract}

\maketitle

\section*{Introduction}
Let $\cG$ be a finite group scheme over an algebraically closed field $k$ of characteristic $p>0$. Much of the recent work on the representations of $\cG$ has
focused on the investigation of invariants that are defined in terms of representation-theoretic support spaces, whose elements are equivalence classes of certain algebra homomorphisms
$\alpha : k[T]/(T^p) \lra k\cG$, the so-called $p$-points. For each $\cG$-module $M$, one can consider the linear operators $m\mapsto \alpha(T\!+\!(T^p))m$ along with their images, kernels and ranks. 
By specifying values of these data, one arrives at interesting full subcategories of the category $\modd \cG$ of finite-dimensional $\cG$-modules. In this article, we show how morphisms with values in 
Grassmannians can be employed to obtain new invariants for the objects of these categories. 

One salient feature of the modular representation theory of finite groups is given by reduction to elementary abelian groups, with Quillen's Dimension Theorem being one notable instance. In
our situation, basic algebro-geometric observations imply that our invariants are determined by their values on elementary abelian group schemes of rank $2$. While these groups usually still have wild 
representation type, their modules enjoy properties that do not possess analogs in higher ranks. 

Following a few preliminary observations concerning morphisms between projective varieties, we turn in Section \ref{S:MIG} to the study of maps defined by modules. To a $\cG$-module
$M$, one associates its generic $j$-rank $\rk^j(M)$ ($1\!\le\!j\!\le\! p\!-\!1$), which is the maximal rank associated to the $j$-th powers of the aforementioned operators. If $\cG$ is an infinitesimal group scheme, a $\cG$-module $M$ thus determines open subsets $U_{M,j} \subseteq \Proj(V(\cG))$ of the projectivized variety of infinitesimal 
one-parameter subgroups. Moreover, the images of these operators give rise to morphisms  $\msim^j_M : U_{M,j} \lra \Gr_{\rk^j(M)}(M)$ taking values in Grassmannians with base space $M$. 
For the so-called modules of constant $j$-rank, which are characterized by the condition $U_{M,j}=\Proj(V(\cG))$, one particular case of interest arises when this variety coincides with a projective 
space $\PP^n$. In the context of 
restricted Lie algebras (or infinitesimal groups of height $1$), this happens for instance when the nullcone $V(\fg) \subseteq \fg$ is a linear subspace. In that case, the resulting morphisms $\msim^j_M: 
\PP(V(\fg)) \lra \Gr_{\rk^j(M)}(M)$ are constant or have finite generic fibres. For $j=1$, the map $\msim_M^j$ is constant or injective, so that not all morphisms arise via this construction. Modules yielding 
constant morphisms enjoy the so-called equal images property. They were first investigated for the group $\ZZ/(p)\!\times\!\ZZ/(p)$ by Carlson-Friedlander-Suslin in \cite{CFS}.

Upon composing $\msim^j_M$ with the appropriate Pl\"ucker embedding, we arrive at morphisms between projective spaces. The common degree of the homogeneous polynomials defining these 
maps is an interesting invariant, the $j$-degree $\deg^j(M)$ of $M$, which we study in Section \ref{S:DM}. The degrees and ranks are linked via the following formula:

\begin{thm*} Let $\fg$ be a restricted Lie algebra whose nullcone $V(\fg)$ is a subspace of $\fg$. If $M$ is a $\fg$-module of constant $j$-rank, then
\[ \deg^j(M)\!+\!\deg^j(M^\ast)=j\rk^j(M).\]
\end{thm*}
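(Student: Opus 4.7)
The plan is to reinterpret $\msim^j_M$ and $\msim^j_{M^\ast}$ as classifying morphisms for certain rank-$r$ sub-bundles on $\PP(V(\fg))$ and then to read off the two $j$-degrees from the determinants of these sub-bundles. The hypothesis that $V(\fg)\subseteq\fg$ is a linear subspace is crucial here: it makes $\PP(V(\fg))\cong\PP^N$ a projective space with $\Pic=\ZZ\cdot\cO(1)$, so that ``degree'' is an integer in the first place.

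To begin, the matrix entries of $x^j|_M$ are homogeneous of degree $j$ in $x\in V(\fg)$, so $x\mapsto x^j|_M$ assembles into a morphism of vector bundles
\[
\Phi\colon M\otimes\cO_{\PP(V(\fg))}(-j)\ \lra\ M\otimes\cO_{\PP(V(\fg))}
\]
whose fibre at $[x]$ is, up to scalar, the operator $x^j|_M$. The constant $j$-rank assumption guarantees that $\cI:=\im(\Phi)\subseteq M\otimes\cO$ is a locally free sub-bundle of rank $r:=\rk^j(M)$, and the untwisted kernel $\cK\subseteq M\otimes\cO$ (so $\kernel(\Phi)=\cK\otimes\cO(-j)$) is locally free of rank $n-r$, with $n=\dim_k M$. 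By the universal property of the Grassmannian, $\msim^j_M$ classifies $\cI$, and composition with the Pl\"ucker embedding of $\Gr_r(M)$ gives $\det(\cI)\cong\cO(-\deg^j(M))$.

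The same analysis applied to $M^\ast$ requires one extra input: the action of $x^j$ on $M^\ast$ is, up to a sign, the transpose of $x^j|_M$, and hence $\im(x^j|_{M^\ast})=\kernel(x^j|_M)^\perp$. Thus $\msim^j_{M^\ast}$ classifies the sub-bundle $\cK^\perp\subseteq M^\ast\otimes\cO$, which gives $\det(\cK^\perp)\cong\cO(-\deg^j(M^\ast))$. The two determinants are now linked by the short exact sequence
\[
0\ \lra\ \cK\otimes\cO(-j)\ \lra\ M\otimes\cO(-j)\ \lra\ \cI\ \lra\ 0
\]
together with its ``perp-dual'' $0\to\cK^\perp\to M^\ast\otimes\cO\to\cK^\vee\to 0$. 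Since $M\otimes\cO$ is a trivial bundle, taking determinants in the former yields $\det(\cK)\otimes\det(\cI)\cong\cO(-jr)$, while the latter gives $\det(\cK^\perp)\cong\det(\cK)$. Substituting the two Pl\"ucker identities and equating degrees in $\Pic(\PP^N)=\ZZ$ produces $\deg^j(M)+\deg^j(M^\ast)=j\rk^j(M)$.

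The delicate point I anticipate is the third paragraph: correctly tracking the $\cO(-j)$ twist in the first short exact sequence and identifying the image sub-bundle on the dual side as $\cK^\perp$ rather than some other perpendicular. Once those identifications are pinned down, the rest is a formal determinant computation in $\Pic(\PP^N)=\ZZ$, and the theorem follows.
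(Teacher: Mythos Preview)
Your argument is correct and complete. It is, however, a genuinely different route from the paper's own proof of Theorem~\ref{RDF2}. The paper works entirely with explicit homogeneous polynomials: it considers the ``full minor map''
\[
\Phi:\PP(V(\fg))\lra \PP^{\binom{n}{d}^2-1},\qquad [x]\mapsto [\det(A(x^j_M)_{(I,J)})]_{(I,J)\in\cS(d)^2},
\]
which is visibly given by polynomials of degree $jd$, and then uses the duality Lemma~\ref{RDF1} together with repeated GCD/divisibility arguments (Lemma~\ref{MPS2}) to show that a reduced defining system for $\Phi$ factors as $(g_If_J)_{(I,J)}$, where $(g_I)$ and $(f_J)$ are reduced defining systems for $\mspl_M\circ\msim^j_M$ and $\mspl_{M^\ast}\circ\msim^j_{M^\ast}$. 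Comparing degrees then gives $jd=\deg(g_I)+\deg(f_J)$.

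Your proof replaces this explicit polynomial bookkeeping by a Picard-group computation: you package $x\mapsto x^j|_M$ as a bundle map $M\otimes\cO(-j)\to M\otimes\cO$, identify $\deg^j(M)$ and $\deg^j(M^\ast)$ with $-\deg\det(\cI)$ and $-\deg\det(\cK^\perp)$ via the characterization $\varphi^\ast\cO(1)=\cO(\deg\varphi)$ (the paper's Remark~(3) following the definition of degree), and then read off the relation from two short exact sequences. This is exactly the ``geometric aspect, related to alternative methods involving vector bundles'' that the paper explicitly says it only alludes to. What you gain is conceptual transparency and the complete avoidance of the minor combinatorics and GCD chase; what the paper's approach buys is that it stays within the elementary framework of Section~\ref{S:MHP} and needs no bundle-theoretic prerequisites. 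Your identifications $\im(x^j|_{M^\ast})=(\ker x^j|_M)^\perp$ and $\det(\cK^\perp)\cong\det(\cK)$ are the bundle-theoretic counterparts of the paper's Lemma~\ref{RDF1} and the factorization $\gamma_{(I,J)}\doteq g_If_J$, respectively.
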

Consequently, the categories of equal images modules and their duals, the equal kernels modules, which were defined in \cite{CFS}, comprise exactly those modules, whose $j$-degrees are $0$ and $j\rk^j(M)$, respectively. 
Moreover, degrees may be used to distinguish modules having the same constant Jordan type. The functions $M \mapsto \deg^j(M)$ are subadditive on exact sequences of modules of constant 
$j$-rank, and additive on sequences that are locally split. In the special case of elementary abelian group schemes of rank $2$, the $1$-degree of a module $M$ of constant $1$-rank coincides with the 
codimension of its ``generic kernel'' $\fK(M)\subseteq M$.

Although analogs of the above maps don't seem to be available for arbitrary finite group schemes, the factorization property of $p$-points often allows the extension of results concerning restricted
Lie algebras to the general context. A case in point is provided in Sections \ref{S:CRLD} and \ref{S:FG}, where we exploit results by Tango \cite{Ta1,Ta2} on morphisms $\PP^n \lra \Gr_d(V)$ to 
obtain information on modules of constant rank. As we show, a constant rank module over a finite group $G$ has generic rank zero whenever its dimension is bounded by the $p$-rank $\rk_p(G)$ 
of $G$. Thus, non-trivial modules of constant rank for a $p$-elementary abelian group of rank $r$ have dimension $\ge r\!+\!1$.  In the same vein, the following result, which yields information on the 
Jordan types of self-dual modules, rests on the observation that the degree of a module is computable from its restriction to an elementary abelian subgroup scheme of rank $2$.

\begin{thm**} Let $\cG$ be a finite group scheme containing an elementary abelian subgroup scheme of rank $\ge 2$. Suppose that $M$ is a self-dual $\cG$-module.
\begin{enumerate}
\item If $M$ has constant $j$-rank, then $\rk^j(M)\equiv 0 \modd(2)$, whenever $j \equiv 1\modd(2)$.
\item If $M$ has constant Jordan type $\Jt(M)=\bigoplus_{i=1}^pa_i[i]$, then $a_i\equiv 0 \ \modd(2)$ whenever $i\equiv 0 \ \modd(2)$. \end{enumerate} \end{thm**}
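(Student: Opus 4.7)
The plan is to reduce both statements to an application of Theorem A on an elementary abelian subgroup scheme $\cE \subseteq \cG$ of rank $2$, and then, for part (2), to translate the resulting parity information on $\rk^j(M)$ into parity information on the Jordan multiplicities $a_i$.

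First I would pick an elementary abelian subgroup scheme $\cE$ of rank exactly $2$, contained in the rank $\ge 2$ one granted by hypothesis. Restriction respects duality, so $M|_\cE$ remains self-dual; moreover, since every $p$-point of $\cE$ is a $p$-point of $\cG$, the module $M|_\cE$ inherits the constant $j$-rank (respectively constant Jordan type) of $M$ together with the same numerical invariants. When $\cE$ is infinitesimal, it corresponds to the two-dimensional abelian restricted Lie algebra $\fg\cong k^2$ with trivial $p$-operation, whose nullcone $V(\fg)=\fg$ is visibly a linear subspace, so Theorem A applies directly; for a general $\cG$ the observation from the introduction that the $j$-degree of a $\cG$-module is computable from its restriction to a rank-$2$ elementary abelian subgroup scheme allows Theorem A to be invoked via $\cE$.

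For part (1), self-duality gives $\deg^j(M|_\cE)=\deg^j((M|_\cE)^\ast)$, and Theorem A then yields
\[ 2\deg^j(M|_\cE) \ =\ \deg^j(M|_\cE)+\deg^j((M|_\cE)^\ast) \ =\ j\rk^j(M). \]
Thus $j\rk^j(M)$ is even, forcing $\rk^j(M)\equiv 0 \modd(2)$ whenever $j$ is odd. For part (2), the formula $\rk^j(M)=\sum_{i>j}(i-j)a_i$, valid for a module of constant Jordan type, combined with (1) gives, for every even $i$,
\[ \rk^{i-1}(M) \ =\ \sum_{k\ge i}(k-i+1)a_k \ \equiv\ \sum_{\substack{k\ge i\\ k\ \text{even}}} a_k \ \modd(2), \]
since $(k-i+1)$ is odd precisely when $k$ and $i$ have the same parity. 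A downward induction on even $i$, starting from the largest even index $i_0\le p$ (where the sum on the right reduces to $a_{i_0}$), then forces $a_i\equiv 0 \modd(2)$ for every even $i$.

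The principal obstacle I foresee is the transfer of Theorem A, stated for restricted Lie algebras, to an arbitrary finite group scheme $\cG$: the relevant $\cE$ may be discrete (e.g.\ $(\ZZ/p)^2$) rather than infinitesimal. Both incarnations, however, share the underlying algebra $k[X,Y]/(X^p,Y^p)$ and the same parameter space of $p$-points, so once the introduction's assertion about rank-$2$ restriction is formalized (presumably in Section \ref{S:FG}), the argument proceeds verbatim.
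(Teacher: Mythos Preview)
Your proposal is correct and follows the paper's approach for part (1) essentially verbatim: restrict to a rank-$2$ elementary abelian $\cE$, apply the rank--degree formula (Theorem~A / Theorem~\ref{EA5}) to the self-dual module $M|_\cE$, and read off $2\deg^j(M|_\cE)=j\rk^j(M)$.

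For part (2) your downward induction works, but the paper's route is shorter: from $\rk^j(M)=\sum_{i>j}(i-j)a_i$ one obtains the second-difference formula
\[ a_j \;=\; \rk^{j-1}(M)-2\rk^j(M)+\rk^{j+1}(M), \]
and for even $j$ both $\rk^{j-1}(M)$ and $\rk^{j+1}(M)$ are even by (1), so $a_j$ is even immediately, with no induction needed.

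Your ``principal obstacle'' is genuine and worth a word of caution. The isomorphism $k\cE\cong U_0(\fe_2)$ is only of associative algebras; the antipodes differ when $\cE$ is not infinitesimal of height $1$ (e.g.\ $\cE=E_2$ or $\GG_{a(2)}$), so the $\cE$-dual $M^\ast$ is \emph{not} the same module as the $U_0(\fe_2)$-dual $M^\sharp$. They are related by a twist: $M^\ast\cong (M^\sharp)^{(\lambda)}$ with $\lambda=\eta_\cE\circ\eta_{\fe_2}\in\Aut(k\cE)$. The paper bridges this gap via Corollary~\ref{EA4}, which shows $\deg^j(N^{(\lambda)})=\deg^j(N)$ for any algebra automorphism $\lambda$; this is what makes Theorem~A transfer to Theorem~\ref{EA5}. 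So your instinct that ``the argument proceeds verbatim'' once one identifies the underlying algebras is almost right, but the extra step of automorphism-invariance of the degree is needed and is not entirely trivial.
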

The number $a_i$ above is the multiplicity of the $i$-dimensional indecomposable $k[X]/(X^p)$-module $[i]$ as a direct summand of the module $\alpha^\ast(M)$, obtained from $M$ via
pull-back along the $p$-point $\alpha : k[X]/(X^p)\lra k\cG$.
 
This paper mainly follows an algebraic approach which seems to be suitable for our purposes. Geometric aspects, related to alternative methods involving vector bundles, are only alluded to 
occasionally. I am grateful to Eric Friedlander and Julia Pevtsova for sharing their geometric insights with me. 

\bigskip

\section{Morphisms and homogeneous polynomials}\label{S:MHP}
In this section we collect a few basic properties of certain morphisms that are relevant for our intended applications. Our main tool is the notion of a degree of a morphism
$\varphi : X \lra Y$ between certain quasi-projective varieties. 

\bigskip

\subsection{Morphisms between projective varieties} 
Throughout this section, $k$ denotes an algebraically closed field. Recall that a polynomial $f \in k[X_0,\ldots, X_n]$ is referred to as being {\it homogeneous of degree $d$} if $f$ is a linear combination 
of monomials of degree $d$. We let $k[X_0,\ldots,X_n]_d$ be the subspace of homogeneous polynomials of degree $d$ and put $\deg(f)=d$ for every $f \in k[X_0,\ldots,X_n]_d\!\smallsetminus\!\{0\}$.

Given $f,g \in k[X_0,\ldots,X_m]$, we write $g|f$ to indicate that $g$ divides $f$.

\bigskip

\begin{Lem} \label{MPS1} Let $f\ne 0$ be homogeneous of degree $d$. If $g |f$, then $g$ is homogeneous of degree $\le d$. \end{Lem}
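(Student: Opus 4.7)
The plan is to exploit the decomposition of an arbitrary polynomial into its homogeneous components and then compare the extremal-degree components on both sides of the relation $f=gh$.

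First I would write $g=\sum_{i=a}^{b} g_i$ and $h=\sum_{j=c}^{e} h_j$ as their homogeneous decompositions, where each $g_i\in k[X_0,\ldots,X_n]_i$ and $h_j\in k[X_0,\ldots,X_n]_j$, choosing $a\le b$ and $c\le e$ so that $g_a,g_b,h_c,h_e$ are all nonzero. Multiplying out, one has
\[ f = gh = \sum_{i,j} g_i h_j,\]
where each summand $g_ih_j$ lies in the homogeneous component of degree $i+j$. The homogeneous component of $gh$ of smallest degree that can possibly be nonzero is $g_a h_c$, and the largest is $g_b h_e$.

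The key step is to invoke the fact that $k[X_0,\ldots,X_n]$ is an integral domain, which forces $g_a h_c\neq 0$ and $g_b h_e\neq 0$. Consequently the smallest and largest degrees of nonzero homogeneous components of $f$ are $a+c$ and $b+e$ respectively. Since $f\neq 0$ is homogeneous of degree $d$, its only nonzero homogeneous component has degree $d$, so $a+c=d=b+e$. Combined with $a\le b$ and $c\le e$, this yields $a=b$ and $c=e$. Hence $g=g_a$ is homogeneous, and its degree $a$ satisfies $a\le a+c=d$.

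There is really no substantive obstacle here; the proof is mechanical once the homogeneous decomposition is introduced. The one point to be careful about is making explicit the use of the integral-domain property of the polynomial ring, since that is what prevents cancellation in the extremal-degree products $g_a h_c$ and $g_b h_e$ and thereby pins down the degree of $g$.
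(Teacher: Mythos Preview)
Your proof is correct and follows essentially the same approach as the paper: decompose $g$ and $h$ into homogeneous components, use that the polynomial ring is an integral domain to conclude the extremal-degree products are nonzero, and deduce $a+c=d=b+e$, forcing $g$ to be homogeneous of degree at most $d$. You even make the integral-domain step explicit, which the paper leaves implicit.
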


\begin{proof} We write $f=gh$ as well as $g=\sum_{i=\ell_1}^{\ell_2} g_i$, $h =\sum_{i=m_1}^{m_2} h_i$, where $g_i,h_i$ are homogeneous of degree $i$ and $g_{\ell_1},g_{\ell_2},h_{m_1},h_{m_2}$
are not zero. Then we have
\[ f = g_{\ell_2}h_{m_2}\!+\! \sum_{i<\ell_2+m_2}v_i,\]
where $v_i \in k[X_0,\ldots,X_n]_i$. Since $g_{\ell_2}h_{m_2} \ne 0$, this readily yields $d=\ell_2\!+\!m_2$. By the same token,
\[  f = g_{\ell_1}h_{m_1}\!+\! \sum_{i>\ell_1+m_1}w_i \ \ ;  \ \ \deg(w_i) = i,\]
so that $d=\ell_1\!+\!m_1$. Hence $\ell_2 = d\!-\!m_2 \le d\!-\!m_1 = \ell_1$, implying that $g=g_{\ell_1}$ is homogeneous of degree $\le d$. \end{proof}

\bigskip
\noindent
For an $n$-dimensional $k$-vector space $V\ne (0)$, we let $\PP(V)$ be the projective variety of one-dimensional subspaces of $V$. In particular, $\PP^n = \PP(k^{n+1})$ denotes the
$n$-dimensional projective space, whose elements are of the form $(x_0\!:\!x_1\!: \cdots :\!x_n)$.

We are interested in morphisms $\varphi: X \lra Y$ between quasi-projective varieties that are given by homogeneous polynomials in the sense of the following:

\bigskip

\begin{Definition} Let $X \subseteq \PP^n$ and $Y \subseteq \PP^m$ be quasi-projective varieties, $\varphi : X \lra Y$ be a morphism, $f_0,\ldots, f_m \in k[X_0,\ldots,X_n]$ be homogeneous 
polynomials of degree $d$. We say that \textit{$\varphi$ is defined by $(f_0,\ldots, f_m)$} and that $(f_0,\ldots, f_m)$ is a {\it defining system for} $\varphi$, provided
\[ \varphi(x) = (f_0(x):\cdots :f_m(x)) \ \ \ \ \text{for every} \ x \in X.\] 
The morphism $\varphi$ is said to be \textit{homogeneous}, if it is defined by some tuple of homogeneous polynomials of the same degree. \end{Definition}
\noindent
In this definition we tacitly assume that $X$ does not intersect the zero locus $Z(f_0,\ldots,f_m)$ of the polynomials $f_0,\ldots,f_m$.  

We begin with a few elementary observations concerning rational maps $\varphi : \PP^n \dashrightarrow \PP^m$. If $f \in k[X_0,\ldots,X_n]$ is homogeneous, then $D(f) := \{x \in \PP^n \ ; \ f(x) \ne 0\}$ is 
a well-defined open subset of $\PP^n$. 

\bigskip

\begin{Lem} \label{MPS2} Let $U \subseteq \PP^n$ be a non-empty open subset, $\varphi : U \lra \PP^m$ be a morphism. 
\begin{enumerate}
\item Suppose there are polynomials $f_0,\ldots,f_m \in k[X_0,\ldots,X_n]_d$ and $g_0,\ldots,g_m \in k[X_0,\ldots,X_n]_{d'}$ such that
\begin{enumerate}
\item $\gcd(f_0,\ldots,f_m) = 1 = \gcd(g_0,\ldots,g_m)$, and
\item $\varphi$ is defined by $(f_0,\ldots,f_m)$ as well as $(g_0,\ldots,g_m)$. \end{enumerate} 
Then there exists $\lambda \in k^\times$ such that $g_i = \lambda f_i$ for $i \in \{0,\ldots,m\}$.
\item If $\varphi$ is defined by $(g_0,\ldots,g_m)$, then there exist homogeneous polynomials $h$ and $f_0,\ldots, f_m \in k[X_0,\ldots,X_n]$ such that
\begin{enumerate}
\item $U \subseteq D(h)$, $g_i = hf_i$ for $0\le i\le m$, and
\item $\gcd(f_0,\ldots,f_m)=1$, and
\item $\varphi$ is defined by $(f_0,\ldots,f_m)$. \end{enumerate} \end{enumerate} \end{Lem}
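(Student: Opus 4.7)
The plan is to handle part (2) first, since it is essentially a normalization statement, and then reduce part (1) to a short rational-function argument in the UFD $k[X_0,\ldots,X_n]$. For (2), I would set $h := \gcd(g_0,\ldots,g_m)$, which is homogeneous by Lemma \ref{MPS1} since each $g_i$ is. Writing $g_i = h f_i$ yields homogeneous $f_i$ with $\gcd(f_0,\ldots,f_m) = 1$. For any $x \in U$ the value $\varphi(x) \in \PP^m$ forces some $g_i(x)$ to be nonzero, hence $h(x) \ne 0$, so $U \subseteq D(h)$. Dividing by $h(x) \in k^\times$ in the homogeneous coordinates then gives $\varphi(x) = (f_0(x):\cdots:f_m(x))$, so $(f_0,\ldots,f_m)$ is a defining system for $\varphi$.

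For (1), the starting observation is that on $U$ both tuples represent the same point of $\PP^m$, hence the polynomials $f_i g_j - f_j g_i$ vanish on $U$ for all $i,j$. Since $U$ is a non-empty open subset of the irreducible variety $\PP^n$ it is Zariski dense, so these relations hold as polynomial identities in $k[X_0,\ldots,X_n]$. Choose an index $i_0$ with $f_{i_0} \ne 0$; without loss of generality take $i_0 = 0$, and introduce $\lambda := g_0/f_0$ in the fraction field $K$ of $k[X_0,\ldots,X_n]$. The identities $f_0 g_j = f_j g_0$ then translate into $g_j = \lambda f_j$ in $K$ for every $j$.

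The main obstacle is to verify that $\lambda$ is actually a nonzero constant, and the cleanest way is via unique factorization. I would write $\lambda = p/q$ in lowest terms in $K$, so that $p,q \in k[X_0,\ldots,X_n]$ satisfy $\gcd(p,q) = 1$. The equalities $q g_j = p f_j$, combined with $\gcd(p,q)=1$, yield $q \mid f_j$ and $p \mid g_j$ for every $j \in \{0,\ldots,m\}$. The two coprimality hypotheses $\gcd(f_0,\ldots,f_m) = 1 = \gcd(g_0,\ldots,g_m)$ then collapse both $p$ and $q$ to nonzero scalars, so $\lambda \in k^\times$ and $g_j = \lambda f_j$ for all $j$, as required. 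The only subtle point throughout is the distinction between equalities of rational functions in $K$ and equalities of polynomials in $k[X_0,\ldots,X_n]$, which is controlled by the irreducibility of $\PP^n$ together with the UFD structure.
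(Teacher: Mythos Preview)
Your proposal is correct and follows essentially the same approach as the paper. Part (2) is identical: both take $h=\gcd(g_0,\ldots,g_m)$, invoke Lemma \ref{MPS1} for homogeneity, and use non-emptiness of $U$ to get $U\subseteq D(h)$. For part (1), both arguments start from the polynomial identities $f_ig_j=f_jg_i$ obtained by density of $U$ in $\PP^n$ and then exploit unique factorization; the paper compares $p$-adic multiplicities $m_p(f_i)$ and $m_p(g_i)$ prime by prime, whereas you package the same UFD reasoning by writing $\lambda=g_0/f_0=p/q$ in lowest terms and reading off $q\mid f_j$, $p\mid g_j$ for all $j$. These are equivalent formulations of the same idea.
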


\begin{proof} (1) By assumption (b), we have $f_i(u)g_j(u)=f_j(u)g_i(u)$ for all $u \in U$ and $i,j \in \{0,\ldots,m\}$. As the open set $U$ lies dense in the irreducible variety $\PP^n$, this readily yields
\[ (\ast) \ \ \ \ \ \ \ f_ig_j = f_jg_i \ \ \ \ \ \ \ \ \ \ \forall \ i,j \in \{0,\ldots,m\}.\]
Let $i \in \{0,\ldots,m\}$. If $f_i=0$, then ($\ast$) yields $f_jg_i=0$ for all $j$. Since there is $j \in \{0,\ldots,m\}$ such that $f_j\ne 0$, it follows that $g_i=0$.

Suppose that $f_i\ne 0$, so that $g_i\ne 0$. For a prime polynomial $p \in k[X_0,\ldots,X_n]$, we denote by $m_p(f_i)$ the multiplicity of $p$ in $f_i$. Suppose that $m_p(f_i)>m_p(g_i)$. Then ($\ast$) 
implies
\[ m_p(f_j)=m_p(f_jg_i)\!-\!m_p(g_i) = m_p(f_ig_j)\!-\!m_p(g_i) = m_p(g_j)\!+\!m_p(f_i)\!-\!m_p(g_i)> m_p(g_j)\]
for every $j \in \{0,\ldots,m\}$ with $f_j\ne 0$, so that $p$ is a common divisor of the $f_j$, a contradiction. Thus, $m_p(f_i)\le m_p(g_i)$, whence $m_p(f_i)=m_p(g_i)$ by symmetry. As a result, the 
polynomial $g_i$ is a scalar multiple of $f_i$. The assertion now follows from ($\ast$).

(2) Let $h$ be a greatest common divisor of $g_0,\ldots,g_m$, so that $g_i=hf_i$ for some $f_i \in k[X_0,\ldots,X_n]$.  By virtue of Lemma \ref{MPS1}, the polynomials $h$ and $f_i$ are homogeneous. 
Hence the $f_i$ are homogeneous of the same degree and have greatest common divisor $1$. 

Given $u=[x] \in U$, there is $i \in \{0,\ldots,m\}$ such that 
\[ 0 \ne g_i(x) = h(x)f_i(x),\] 
so that $U\subseteq D(h)$. Since
\[ \varphi(u) = (g_0(u):\cdots :g_m(u)) = (f_0(u):\cdots :f_m(u)) \ \ \ \ \text{for all} \ u \in U, \]
the map $\varphi$ is defined by $(f_0,\ldots,f_m)$. \end{proof}

\bigskip

\begin{Definition} Let $X \subseteq \PP^n$ and $Y \subseteq \PP^m$ be quasi-projective varieties, $\varphi : X \lra Y$ be a homogeneous morphism. An $(m\!+\!1)$-tuple $(f_0,\ldots, f_m) \in k
[X_0,\ldots,X_n]_d^{m+1}$ is called a {\it reduced defining system for $\varphi$}, provided
\begin{enumerate}
\item $\varphi$ is defined by $(f_0,\ldots,f_m)$, and
\item $\gcd(f_0,\ldots,f_m)=1$. \end{enumerate}\end{Definition}

\bigskip

\begin{Lem} \label{MPS3} Let $U \subseteq \PP^n$ be a non-empty open subset, $\varphi : U \lra \PP^m$ be a morphism. Then $\varphi$ is homogeneous. \end{Lem}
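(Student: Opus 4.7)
The strategy is to build a homogeneous defining system for $\varphi$ locally around each point of $U$, and then to glue these local systems into a single global one by means of Lemma~\ref{MPS2}.

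For the local step, fix $u_0 \in U$ and choose $j \in \{0,\ldots,m\}$ with $\varphi(u_0) \in D(Y_j) \subseteq \PP^m$, where $Y_0,\ldots,Y_m$ are the projective coordinates of $\PP^m$. On the open neighborhood $W := \varphi^{-1}(D(Y_j))$ of $u_0$ the morphism $\varphi$ is described by $m$ regular functions $g_i$ on $W$ with $g_j \equiv 1$ via $\varphi(u) = (g_0(u):\cdots:g_m(u))$. Each $g_i$ is, in a neighborhood of $u_0$, a quotient of two homogeneous polynomials of equal degree by the standard description of regular functions on open subsets of $\PP^n$. After shrinking $W$ to an open neighborhood $V$ of $u_0$ and introducing a common denominator, we may write $g_i = h_i/h$ with $h, h_0,\ldots,h_m$ homogeneous of the same degree and $h$ nowhere zero on $V$; the tuple $(h_0,\ldots,h_m)$ then furnishes a homogeneous defining system for $\varphi|_V$.

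Consequently $U$ is covered by open sets $\{V_\alpha\}_{\alpha\in I}$ on each of which $\varphi|_{V_\alpha}$ admits a homogeneous defining system, which by Lemma~\ref{MPS2}(2) we may assume is reduced, say $(f^\alpha_0,\ldots,f^\alpha_m)$. Since $U$ is open in the irreducible variety $\PP^n$, the intersection $V_\alpha \cap V_\beta$ is a non-empty open subset of $\PP^n$ for every $\alpha,\beta$, and on it both tuples $(f^\alpha_i)$ and $(f^\beta_i)$ are reduced defining systems for the same morphism. By Lemma~\ref{MPS2}(1) there exist scalars $\lambda_{\alpha\beta} \in k^\times$ with $f^\beta_i = \lambda_{\alpha\beta} f^\alpha_i$ for all $i$. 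Fixing an index $\alpha_0$ and rescaling each $(f^\alpha_0,\ldots,f^\alpha_m)$ accordingly, we obtain a single tuple $(f_0,\ldots,f_m)$ of homogeneous polynomials of common degree which agrees with the reduced defining system on every $V_\alpha$.

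For any $u \in U$ we then have $u \in V_\alpha$ for some $\alpha$, so $(f_0(u),\ldots,f_m(u))$ is not all zero and $\varphi(u) = (f_0(u):\cdots:f_m(u))$. Thus $(f_0,\ldots,f_m)$ is a homogeneous defining system for $\varphi$ on all of $U$, proving that $\varphi$ is homogeneous. The main subtlety is in the gluing step: a locally constructed defining system cannot in general be extended directly to $U$, since its polynomials may acquire additional common zeros outside the original patch, and it is only the uniqueness statement of Lemma~\ref{MPS2}(1) which allows the local reduced systems to be identified and thereby assembled into a single global tuple.
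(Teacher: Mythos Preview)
Your proof is correct and follows essentially the same route as the paper's: build local homogeneous defining systems, reduce them via Lemma~\ref{MPS2}(2), and glue on overlaps using the uniqueness statement of Lemma~\ref{MPS2}(1). The only cosmetic difference is that the paper invokes a reference for the existence of the local cover by homogeneous defining systems and works with a finite cover (using that $U$ is noetherian), whereas you spell out the local construction by hand via common denominators; the gluing argument is identical.
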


\begin{proof} (1) Since $U \subseteq \PP^n$ is a noetherian topological space, an application of \cite[(1.65)]{GW} provides non-empty open subsets $U_1,\ldots, U_r$ of $U$, and homogeneous 
polynomials $f_{ij} \in k[X_0,\ldots,X_n]$, where $0\le i \le m$ and $1\le j \le r$, such that
\begin{enumerate}
\item[(a)] $U = \bigcup_{j=1}^rU_j$,
\item[(b)] $U_j \subseteq \bigcup_{i=0}^m D(f_{ij})$,
\item[(c)] there exist $d_1, \ldots, d_r  \in \NN_0$ such that $f_{ij} \in k[X_0,\ldots,X_n]_{d_j}$ for $0\le i \le m$ and $1\le j\le r$,
\item[(d)] $\varphi(u) = (f_{0j}(u):\cdots:f_{mj}(u))$ for all $u \in U_j$ and $j \in \{1,\ldots, r\}$. \end{enumerate}
Since $U_j \subseteq \PP^n$ is open, Lemma \ref{MPS2} shows that we may assume $\gcd(f_{0j},\ldots, f_{mj})=1$ for every $j \in \{1,\ldots, r\}$. Given $j \in \{1,\ldots,r\}$, we 
see that $U_j\cap U_1$ is a non-empty open subset of $\PP^n$. By applying Lemma \ref{MPS2}(1) to the morphism $\varphi|_{U_j\cap U_1}$ we find elements $\lambda_j \in k^\times$ such that 
$f_{ij} = \lambda_jf_{i1}$ for $0\!\le\! i\! \le\! m$. Setting $f_i := f_{i1}$ for $0\!\le\! i\!\le\! m$, we thus obtain for $u \in U_j$
\[ \varphi(u) = (f_{0j}(u):\cdots: f_{mj}(u)) = (f_0(u):\cdots:f_m(u)).\]
Thanks to property (a), this identity holds for all $u \in U$. Consequently, the map $\varphi$ is a homogeneous morphism. \end{proof}

\bigskip
\noindent
In view of the foregoing result, the following definition is meaningful:

\bigskip

\begin{Definition} Let $\varphi : \PP^n \dashrightarrow \PP^m$ be a rational map, $(f_0,\ldots,f_m)$ be a reduced defining system for $\varphi$. Then 
\[ \deg(\varphi) = \deg(f_i) \ \ ; \ \ f_i \ne 0\]
is called the {\it degree} $\deg(\varphi)$ of $\varphi$. \end{Definition}

\bigskip

\begin{Remarks} (1) \ The above definition should not be confused with the projective degree of a morphism, see \cite[(19.4)]{Har}. If $\nu_d :\PP^n \lra \PP^N$ denotes the $d$-th Veronese embedding, 
then $\deg(\nu_d) = d$, while the degree of the image $\im \nu_d$ is $d^n$ (cf.\ \cite[(18.13)]{Har}), so that the map $\nu_d : \PP^n \lra \im \nu_d$ has projective degree $d^n$. 

(2) \ Let $A=(a_{ij}) \in \GL_{m+1}(k)$. If $(f_0,\ldots, f_m) \in k[X_0,\ldots,X_m]_d^{m+1}$, then $g_j := \sum_{i=0}^ma_{ij}f_i$ belongs to $k[X_0,\ldots,X_n]_d$, and $\gcd(g_0,\ldots, g_m)=
\gcd(f_0,\ldots,f_m)$. Accordingly, a linear change of coordinates does not affect the degree of a rational morphism $\PP^n \dashrightarrow \PP^m$. 

(3) \ A morphism $\varphi : \PP^n \lra \PP^m$ defines a homomorphism $\varphi^\ast : \Pic(\PP^m) \lra \Pic(\PP^n)$ between the Picard groups of line bundles. 
Since these groups are isomorphic to $\ZZ$, the degree $d$ of $\varphi$ is given by $\varphi^\ast(\cO_{\PP^m}(1))=\cO_{\PP^n}(d)$. \end{Remarks}

\bigskip

\begin{Cor} \label{MPS4} If $\varphi : \PP^n \lra \PP^m$ is a morphism, then any defining system is a reduced defining system. \end{Cor}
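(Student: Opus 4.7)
The plan is to reduce to Lemma \ref{MPS2}(2). Let $(g_0,\ldots,g_m)$ be any defining system for $\varphi$. Since $\varphi$ is defined on all of $\PP^n$, I would apply Lemma \ref{MPS2}(2) with $U = \PP^n$: this produces homogeneous polynomials $h$ and $f_0,\ldots,f_m$ of a common degree such that $g_i = h f_i$ for every $i$, $\gcd(f_0,\ldots,f_m) = 1$, and $\PP^n \subseteq D(h)$.

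The heart of the argument is then the observation that $h$ must be a non-zero scalar. Indeed, $\PP^n \subseteq D(h)$ says that $h$ vanishes nowhere on $\PP^n$, while any non-constant homogeneous polynomial in $k[X_0,\ldots,X_n]$ (for $n \ge 1$) has a zero in $\PP^n$ — for instance, by the projective Nullstellensatz, or elementarily because the affine zero locus of such $h$ in $\A^{n+1}$ has dimension $n \ge 1$ and hence meets $\A^{n+1}\smallsetminus\{0\}$. Therefore $\deg(h)=0$, so $h \in k^\times$, and then $\gcd(g_0,\ldots,g_m) = h\cdot\gcd(f_0,\ldots,f_m)$ is a unit. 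Consequently $(g_0,\ldots,g_m)$ itself satisfies condition (2) of the definition of a reduced defining system.

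I do not expect a genuine obstacle: Lemma \ref{MPS2}(2) packages all of the real work (factoring out the gcd, extracting a reduced defining tuple, and locating the zero set of the common factor), and the added ingredient is only the standard fact that a homogeneous polynomial without a projective zero is a constant. The one sanity check is that $U=\PP^n$ is a non-empty open subset of $\PP^n$, which is automatic and allows Lemma \ref{MPS2}(2) to be invoked.
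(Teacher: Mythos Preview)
Your proof is correct and follows exactly the same route as the paper: apply Lemma \ref{MPS2}(2) with $U=\PP^n$ to factor $g_i=hf_i$ with $\PP^n\subseteq D(h)$, and then observe that $h$ must be constant. The paper simply asserts ``since such a polynomial is constant'' where you spell out the justification via the Nullstellensatz.
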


\begin{proof} Let $(g_0,\ldots,g_m)$ be a defining system, $(f_0,\ldots,f_m)$ be a reduced defining system for $\varphi$. Owing to Lemma \ref{MPS2}(2), there exists a homogeneous
polynomial $h$ such that $\PP^n = D(h)$ and $g_i = hf_i$ for $0\le i\le m$. Since such a polynomial is constant, our assertion follows. \end{proof} 

\bigskip

\begin{Cor} \label{MPS5} Let $\varphi : \PP^n \lra \PP^m$ and $\psi : \PP^m \lra \PP^s$ be morphisms.
\begin{enumerate}
\item We have $\deg(\psi\!\circ\!\varphi) = \deg(\psi)\deg(\varphi)$.
\item If $\psi\!\circ\! \varphi$ is constant, then $\psi$ is constant or $\varphi$ is constant. \end{enumerate} \end{Cor}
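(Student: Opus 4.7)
My plan is to pull back reduced defining systems through the composition and apply Corollary~\ref{MPS4} to bypass any worry about common factors.

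For part (1), I choose a reduced defining system $(f_0,\ldots,f_m)\in k[X_0,\ldots,X_n]_d^{m+1}$ for $\varphi$ with $d=\deg(\varphi)$, and a reduced defining system $(h_0,\ldots,h_s)\in k[Y_0,\ldots,Y_m]_e^{s+1}$ for $\psi$ with $e=\deg(\psi)$. Consider the tuple $(H_0,\ldots,H_s)$ defined by $H_j:=h_j(f_0,\ldots,f_m)\in k[X_0,\ldots,X_n]$. Because each $h_j$ is homogeneous of degree $e$ and each $f_i$ is homogeneous of degree $d$, every nonzero $H_j$ is homogeneous of degree $ed$. For any $x\in\PP^n$, the fact that $\psi\!\circ\!\varphi$ is a well-defined morphism ensures that some $h_j(\varphi(x))\ne 0$, so the $H_j$ are not all identically zero and the tuple $(H_0,\ldots,H_s)$ is a genuine defining system for $\psi\!\circ\!\varphi:\PP^n\lra\PP^s$. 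Here is the step where Corollary~\ref{MPS4} earns its keep: since the source is all of $\PP^n$, the defining system $(H_0,\ldots,H_s)$ is automatically reduced, so no spurious common divisor can drop its degree. Consequently $\deg(\psi\!\circ\!\varphi)=ed=\deg(\psi)\deg(\varphi)$.

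For part (2), I first note that a morphism $\chi:\PP^n\lra\PP^m$ is constant if and only if $\deg(\chi)=0$: constancy means that after a linear change of coordinates (which does not affect the degree by Remark~(2)) a reduced defining system can be taken to be $(1,0,\ldots,0)$, which has degree $0$; conversely, if the reduced defining system consists of constants, then $\chi$ takes only one value. Applying this together with part (1) to $\psi\!\circ\!\varphi$ being constant yields $0=\deg(\psi\!\circ\!\varphi)=\deg(\psi)\deg(\varphi)$, forcing one of the factors to vanish, hence one of $\psi,\varphi$ to be constant.

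The only mild technical point I anticipate is justifying that $(H_0,\ldots,H_s)$ really is a defining system in the sense of the Definition, i.e., that the $H_j$ do not vanish simultaneously on $\PP^n$; this is immediate from the pointwise evaluation $H_j(x)=h_j(\varphi(x))$ together with the fact that $(h_0,\ldots,h_s)$ is a defining system for $\psi$ on all of $\PP^m$. Once this is in place, invoking Corollary~\ref{MPS4} to conclude that the composed system is reduced is the real driver of both statements.
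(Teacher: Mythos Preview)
Your proof is correct and follows essentially the same approach as the paper's: construct a defining system for $\psi\circ\varphi$ by substituting a defining system for $\varphi$ into one for $\psi$, then invoke Corollary~\ref{MPS4} to conclude the composed system is automatically reduced; part (2) then follows from the equivalence of constancy with degree~$0$ together with the multiplicativity just established. The paper's version is terser, but the logical structure is identical.
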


\begin{proof} (1) If $f_0,\ldots, f_m \in k[X_0,\ldots,X_n]$ are homogeneous polynomials of degree $d$ and $\nu \in \NN_0^{m+1}$, then $f_0^{\nu_0}\cdots f_m^{\nu_m}$ is homogeneous of degree 
$d\cdot (\sum_{i=0}^m\nu_i)$. Consequently, $\psi\!\circ\! \varphi$ affords a defining system of homogeneous polynomials of degree $\deg(\psi)\deg(\varphi)$ and Corollary \ref{MPS4} yields the
result.

(2) Since $\psi\!\circ\!\varphi$ is constant, there is $c=(c_0,\ldots,c_s) \in k^{s+1}\!\smallsetminus\!\{0\}$ such that $[\psi\circ \varphi](x) = (c_0:\cdots :c_s)$ for all $x 
\in \PP^n$. This readily implies $0=\deg(\psi\!\circ\!\varphi) = \deg(\psi)\deg(\varphi)$. As a result, one of the factors has degree $0$ and the corresponding map is constant. \end{proof}

\bigskip

\begin{Remarks} (1) In view of the observations above, a morphism $\varphi : \PP^n \lra \PP^m$ corresponds to an element $[A] \in \PP(\Mat_{(m+1)\times(n+1)}(k))$ that is defined by a matrix
$A$ of rank $n\!+\!1$.

(2) According to Corollary \ref{MPS5}, automorphisms of $\PP^n$ have degree $1$. Hence the canonical action of $\GL_{n+1}(k)$ on $\PP^n$ induces an isomorphism  $\PGL_{n+1}(k)\cong 
\Aut(\PP^n)$. \end{Remarks}

\bigskip

\begin{Cor} \label{MPS6} Let $X \subseteq \PP^n$ be a quasi-projective variety such that there exists a non-constant morphism $\omega_X : \PP^1 \lra \PP^n$ with $\omega_X(\PP^1)\subseteq X$. 
If $\varphi : X \lra \PP^m$ is a homogeneous morphism, then 
\[ \deg(\varphi) := \frac{\deg(\varphi\!\circ\!\omega_X)}{\deg(\omega_X)} \in \NN_0\] 
is independent of the choice of $\omega_X$. \end{Cor}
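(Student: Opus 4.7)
The plan is to fix one defining system for $\varphi$ and compute $\deg(\varphi\circ\omega_X)$ directly in terms of it, so that the ratio in the statement visibly collapses to a quantity that no longer mentions $\omega_X$.

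First, using the homogeneity of $\varphi$, I pick a defining system $(f_0,\ldots,f_m) \in k[X_0,\ldots,X_n]_d^{m+1}$ for $\varphi$ of some common degree $d$. Given any non-constant morphism $\omega_X : \PP^1 \lra \PP^n$ with image contained in $X$, I also pick a reduced defining system $(h_0,\ldots,h_n) \in k[T_0,T_1]_e^{n+1}$ for $\omega_X$, so that $e = \deg(\omega_X)$. The composition $\varphi\circ\omega_X$ is then described on $\PP^1$ by the $(m+1)$-tuple
\[ \bigl( f_0(h_0,\ldots,h_n),\ \ldots,\ f_m(h_0,\ldots,h_n) \bigr), \]
each of whose entries is a homogeneous polynomial in $T_0,T_1$ of degree $de$.

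The crucial step is to observe that this tuple is a genuine defining system for the morphism $\varphi\circ\omega_X : \PP^1 \lra \PP^m$. The required non-vanishing on $\PP^1$ is inherited from the fact that $\omega_X(t) \in X$ for every $t \in \PP^1$ and $(f_0,\ldots,f_m)$ defines $\varphi$ on $X$, so the values $(f_i(\omega_X(t)))_{i=0}^m$ never vanish simultaneously. Corollary \ref{MPS4}, applied to the morphism $\varphi\circ\omega_X : \PP^1 \lra \PP^m$, then forces this defining system to be reduced, so $\deg(\varphi\circ\omega_X) = de$. Consequently
\[ \frac{\deg(\varphi\circ\omega_X)}{\deg(\omega_X)} \ = \ \frac{de}{e} \ = \ d, \]
which is independent of $\omega_X$. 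As a by-product, any two defining systems for the homogeneous morphism $\varphi$ must have the same degree.

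The one subtle point I expect to dwell on is the hypothesis for applying Corollary \ref{MPS4}: that $(f_i(h_0,\ldots,h_n))_{i=0}^m$ really is a defining system on all of $\PP^1$, not merely a rational description. It is precisely the automatic reducedness coming from MPS4 that pins down the degree of $\varphi\circ\omega_X$ as exactly $de$ rather than a proper divisor, and without this the cancellation of $e$ would not go through.
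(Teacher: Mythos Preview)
Your proof is correct and follows essentially the same route as the paper's own argument: pick a degree-$d$ defining system for $\varphi$, a degree-$e$ (reduced) defining system for $\omega_X$, compose to obtain a defining system of degree $de$ for $\varphi\circ\omega_X$, and invoke Corollary~\ref{MPS4} to see that this composed system is automatically reduced, whence $\deg(\varphi\circ\omega_X)=de$ and the ratio equals $d$. Your explicit emphasis on why the composed tuple is a genuine defining system on all of $\PP^1$ (and hence why Corollary~\ref{MPS4} applies) is exactly the point the paper is using implicitly.
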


\begin{proof}  Lemma \ref{MPS3} provides homogeneous polynomials $\omega_0,\ldots, \omega_n \in k[X,Y]$ of degree $\ell>0$ such that
\[ \omega_X(x) = (\omega_0(x):\cdots : \omega_n(x)) \ \ \ \ \ \ \forall \ x \in \PP^1.\]
As $\varphi$ is homogeneous, we can find homogeneous polynomials $f_0,\ldots, f_m$ of degree $d$, say, such that
\[ \varphi(x) = (f_0(x):\cdots : f_m(x)) \ \ \ \ \ \ \forall \ x \in X.\]
Note that the polynomials $g_i := f_i(\omega_0,\ldots,\omega_n) \in k[X,Y]$ are homogeneous of degree $d\ell$ such that
\[ (\varphi\circ \omega_X)(x) = (g_0(x):\cdots : g_m(x)) \ \ \ \ \forall \ x \in \PP^1.\]
According to Corollary \ref{MPS4}, this implies that $\deg(\varphi\circ \omega_X) = d\ell$, so that $\deg(\varphi)=d \in \NN_0$ does not depend on $\omega_X$.  \end{proof} 

\bigskip

\subsection{Tango's Theorem} It is a well-known fact that the dimension of the image of a morphism $\varphi : \PP^m \lra \PP^n$ belongs to $\{0,m\}$, cf.\ \cite[I.7,Prop.6]{Mu}. 
In a series of articles, including \cite{Ta1,Ta2}, H.\ Tango investigated morphisms $\PP^m \lra \Gr(n,d)$ with values in the Grassmann variety $\Gr(n,d)$ of $d$-dimensional linear 
subspaces of $\PP^n$. Letting $\Gr_d(V)$ be the Grassmann variety of $d$-dimensional subspaces of a finite-dimensional $k$-vector space $V$, we summarize some of his results 
as follows:

\bigskip

\begin{Thm}[H. Tango] \label{Ta1} Let $V$ be an $n$-dimensional vector space, $\varphi : \PP^m \lra \Gr_d(V)$ be a morphism.
\begin{enumerate}
\item If $n\le m$, then $\varphi$ is constant.
\item Suppose that $n=m\!+\!1$.
\begin{enumerate}
\item If $n$ is odd and $2\! \le\! d\! \le \!n\!-\!2$, then $\varphi$ is constant.
\item If $d$ is odd while $3\! \le\! d \!\le\! n\!-\!2$ and $(n,d)\ne (6,3)$, then $\varphi$ is constant. \end{enumerate} \end{enumerate} \end{Thm}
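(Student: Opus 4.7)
The plan is to translate the question into vector bundle theory on $\PP^m$ and extract Chern class obstructions. First I would set up the standard correspondence: a morphism $\varphi : \PP^m \lra \Gr_d(V)$ is equivalent to a short exact sequence
\[
0 \lra S \lra V \otimes_k \cO_{\PP^m} \lra Q \lra 0
\]
with $S = \varphi^\ast \cS$ the pullback of the tautological rank-$d$ sub-bundle on $\Gr_d(V)$. Since $\Pic(\Gr_d(V)) \cong \ZZ$ is generated by the ample line bundle $\det \cS^\ast$, whose complete linear system realizes the Pl\"ucker embedding, $\varphi$ is constant if and only if $\det S^\ast$ is trivial, equivalently $c_1(S) = 0$.

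Next I would write $c(S) = 1 + s_1 h + \cdots + s_d h^d$ and $c(Q) = 1 + q_1 h + \cdots + q_{n-d} h^{n-d}$ in the Chow ring $\ZZ[h]/(h^{m+1})$ of $\PP^m$, where $h$ denotes the hyperplane class. The Whitney sum formula gives $c(S)\,c(Q) \equiv 1 \pmod{h^{m+1}}$; since the left-hand product, viewed in $\ZZ[h]$, has degree at most $d+(n-d)=n$, this congruence lifts to the polynomial identity
\[
c(S)\,c(Q) = 1 + \delta h^n \quad \text{in } \ZZ[h], \qquad \delta = s_d\, q_{n-d}.
\]

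For part (1), since $n \le m < m+1$, the identity reads $c(S)\,c(Q) = 1$ in $\ZZ[h]$; as $\ZZ[h]$ is an integral domain whose only units have degree $0$, both factors (having constant term $1$) must equal $1$, so $c_1(S) = 0$ and $\varphi$ is constant. Part (2), with $n = m+1$, only yields $c(S)\,c(Q) = 1 + \delta h^n$, and the same reasoning applies only when $\delta = 0$. To handle $\delta \ne 0$ I would bring in the constraint that both $Q$ and $S^\ast$ are globally generated (being quotients of trivial bundles, via the displayed sequence and its dual), which forces the sign conditions
\[
q_j \ge 0 \quad (1\le j\le n-d), \qquad (-1)^i s_i \ge 0 \quad (1\le i\le d).
\]

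The hard part, which I expect to be the main obstacle, is now a combinatorial case analysis. Combined with the divisibility $c(S)\mid (1+\delta h^n)$ in $\ZZ[h]$, the sign conditions above restrict the admissible $c(S)$ to a short list, determined by the rational factorization of $1+\delta h^n$ into cyclotomic pieces. One must show that under hypothesis (2a) or (2b), no degree-$d$ factor can realize the required alternating-sign pattern, with the sole arithmetic exception $(n,d)=(6,3)$ (corresponding to a genuine non-constant morphism $\PP^5 \lra \Gr_3(k^6)$). This detailed verification is the content of Tango's papers \cite{Ta1,Ta2}, which I would invoke rather than reprove.
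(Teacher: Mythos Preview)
Your proposal is correct and in fact goes well beyond what the paper does: the paper's proof consists solely of the observation $\Gr_d(V)\cong\Gr(n-1,d-1)$ together with direct citations of \cite[(3.2)]{Ta1} and \cite[Thm.]{Ta2}, without any indication of the underlying mechanism. Your sketch accurately reconstructs Tango's strategy---the tautological sequence, the criterion $c_1(S)=0$ for constancy, the Whitney identity $c(S)c(Q)=1+\delta h^n$ in $\ZZ[h]$, and the positivity constraints from global generation---before deferring the final case analysis to the same references. So the two approaches coincide at the level of attribution, but your version supplies the conceptual scaffolding the paper omits; the trade-off is that the paper's one-line citation is cleaner for a result being quoted rather than proved.
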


\begin{proof} By definition, we have $\Gr_d(V) \cong \Gr(n\!-\!1,d\!-\!1)$. Now (1) and (2) are direct consequences of \cite[(3.2)]{Ta1} and \cite[Thm.]{Ta2}, respectively. \end{proof} 

\bigskip

\subsection{Homogeneous morphisms of conical varieties}
Let $\fM$ be a finite-dimensional $k^\times$-module with weight space decomposition $\fM=\bigoplus_{i\ge 0}\fM_i$. A $k^\times$-stable Zariski closed
subset $V\subseteq \fM$ such that $V\not\subseteq \fM_0$ is referred to as a {\it conical variety}. The commutative group $k^\times$ acts on the coordinate ring $k[\fM] = S(\fM^\ast)$ of polynomial
functions on $\fM$ via
\[ (\alpha\dact f)(m) := f(\alpha.m) \ \ \ \ \ \ \ \ \ \forall \ \alpha \in k^\times, \, m \in \fM, \, f \in k[\fM]\]
such that each component $k[\fM]^{(d)} := S^d(\fM^\ast)$ is a $k^\times$-submodule. Since $V$ is conical, its coordinate ring $k[V]$ inherits this action and there results a grading
\[ k[V] = \bigoplus_{i\ge 0} k[V]_i,\]
where $k[V]_i = \{ f \in k[V] \ ; \ f(\alpha.v)=\alpha^if(v) \ \ \ \forall \ \alpha \in k^\times, \, v \in V\}$. The elements of $k[V]_i$ will be referred to as homogeneous polynomial functions of degree $i$. 

In the above situation, we consider the projective varieties $\Proj(V)\subseteq \Proj(\fM)$.  The underlying sets are the $k^\times$-orbits of $V\smallsetminus \fM_0\subseteq \fM\smallsetminus\fM_0$.
The regular functions are locally given by fractions of homogeneous polynomial functions of the same degree. If the action of $k^\times$ on $\fM$ is just the restriction of the scalar multiplication,
so that $\fM=\fM_1$, we retrieve the standard projective varieties $\PP(V)\subseteq \PP(\fM)$. We write $V_0 := \{v \in V \ ; \ \alpha.v = v \ \ \ \forall \ \alpha \in k^\times\} = V\cap \fM_0$.

A morphism $\varphi : V \lra W$ between two conical varieties is called {\it homogeneous}, provided there exists $d\!\ge\!0$ such that $\varphi(\alpha.v)=\alpha^d.\varphi(v)$ for all $\alpha \in k^\times$
and $v \in V$. This requirement is equivalent to the comorphism $\varphi^\ast : k[W] \lra k[V]$ satisfying $\varphi^\ast(k[W]_n) \subseteq k[V]_{nd}$ for all $n\ge 0$.

\bigskip

\begin{Lem} \label{HM1} Let $\varphi : V \lra W$ be a homogeneous morphism of conical affine varieties. Then the following statements hold:
\begin{enumerate}
\item $\cO_\varphi := \{[v] \in \Proj(V) \ ; \ \varphi(v)\not \in  W_0\}$ is an open subset of $\Proj(V)$.
\item The map $\bar{\varphi} : \cO_\varphi \lra \Proj(W) \ \ ; \ \ [v] \mapsto [\varphi(v)]$ is a morphism of varieties.
\end{enumerate} \end{Lem}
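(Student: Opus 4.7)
The strategy is to check both claims directly from the $k^\times$-equivariance encoded in homogeneity. A preliminary observation is that $\cO_\varphi$ is a well-defined subset of $\Proj(V)$: if $v' = \alpha.v$ for some $\alpha \in k^\times$, then homogeneity gives $\varphi(v') = \alpha^d.\varphi(v)$, and since $W_0$ is the $k^\times$-fixed locus of $W$ and hence $k^\times$-stable, the condition $\varphi(v) \notin W_0$ depends only on the orbit $[v]$.

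For (1), I would endow $\Proj(V)$ with the quotient topology induced by the canonical projection $\pi : V\smallsetminus\fM_0 \lra \Proj(V)$. Since $W_0$ is Zariski-closed in $W$, being the intersection of $W$ with the weight-zero subspace of the ambient $k^\times$-module, the set
\[ \pi^{-1}(\cO_\varphi) \ = \ \varphi^{-1}(W\smallsetminus W_0) \cap (V\smallsetminus \fM_0) \]
is open in $V$, so $\cO_\varphi$ is open in $\Proj(V)$.

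For (2), the plan is to cover $\Proj(W)$ by the standard affine opens $D_+(h) = \{[w] \in \Proj(W) : h(w)\ne 0\}$, where $h$ ranges over homogeneous elements of $k[W]$ of positive degree; the sections on $D_+(h)$ are then fractions $g/h^r$ with $g \in k[W]_{r\deg(h)}$. Given $[v_0] \in \cO_\varphi$, the condition $\varphi(v_0)\notin W_0$, combined with the fact that $W_0$ coincides with the common zero locus of $\bigoplus_{i>0} k[W]_i$, supplies such an $h$ of some positive degree $n$ with $h(\varphi(v_0))\ne 0$. When $d>0$, the identity $(h\circ\varphi)(\alpha.v) = h(\alpha^d.\varphi(v)) = \alpha^{nd}(h\circ\varphi)(v)$ places $h\circ\varphi$ in $k[V]_{nd}$ with $nd>0$. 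Hence $D_+(h\circ\varphi)\subseteq \cO_\varphi$ is an open neighborhood of $[v_0]$ mapping into $D_+(h)$, and the pullback $(g\circ\varphi)/(h\circ\varphi)^r$ is a ratio of elements of $k[V]_{rnd}$, hence a regular function on $D_+(h\circ\varphi)$. This exhibits $\bar{\varphi}$ locally as a morphism, and the local pieces glue because the set-theoretic formula $\bar{\varphi}([v])=[\varphi(v)]$ is unambiguous on overlaps.

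The principal obstacle is the degenerate case $d=0$, in which $\varphi$ is $k^\times$-invariant and the pullbacks $h\circ\varphi$ and $g\circ\varphi$ lie in $k[V]_0$ rather than in a positive-degree component, so the recipe above no longer directly produces a homogeneous denominator of positive degree. I would handle this by descent: the composite of $\varphi|_{\pi^{-1}(\cO_\varphi)}$ with the canonical projection $W\smallsetminus W_0\lra \Proj(W)$ is a $k^\times$-invariant morphism into $\Proj(W)$, and the universal property of the geometric quotient $\pi$ then produces the required morphism $\bar{\varphi}$ on $\cO_\varphi$, whose set-theoretic values are forced to be $[\varphi(v)]$.
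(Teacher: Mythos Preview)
Your argument is correct and follows the same line as the paper's proof. The one place you diverge is the case $d=0$: the paper does not treat it separately, because a regular function on $\Proj(W)$ is locally $f/g$ with $f,g \in k[W]$ homogeneous of the \emph{same} (not necessarily positive) degree, so $f\circ\varphi$ and $g\circ\varphi$ both land in $k[V]_{d\cdot\deg f}$ and their ratio is regular on $\Proj(V)$ regardless of whether that common degree is zero---your detour through geometric quotients is therefore unnecessary, though not incorrect.
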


\begin{proof} (1) Suppose that $\varphi : V \lra W$ has degree $d$. If $W\subseteq \fM$, we choose a basis $\{v_1,\ldots,v_m\}$ of $\fM$ consisting of homogeneous vectors such that 
$\{v_1,\ldots,v_n\}$
is a basis of  $\bigoplus_{i>0}\fM_i$. For each $j \in \{1,\ldots,n\}$, the coordinate function ${\rm pr}_j: \fM \lra k$ defines a homogeneous element of $k[W]$ of degree $\deg(v_j)$. Consequently, $
\varphi_j := {\rm pr}_j\circ \varphi \in k[V]$ is homogeneous for $1\le j \le n$ and $\cO_\varphi = \Proj(V)\!\smallsetminus\!Z(\varphi_1,\ldots,\varphi_n)$, the complement of the zero locus of the 
$\varphi_j$, is an open subset of $\Proj(V)$.

(2) By assumption, we have $\varphi(\alpha.v)=\alpha^d.\varphi(v)$ for all $v \in V$, so that the map $\bar{\varphi}$ is well-defined and continuous. Let $U\subseteq \Proj(W)$ be an open set,
$\rho : U \lra k$ be a regular function. Then $U':=\bar{\varphi}^{-1}(U)$ is open in $\cO_\varphi$. Given $x \in U'$, there exist an open subset $U_1 \subseteq U$ containing $\bar{\varphi}(x)$ and
$f,g \in k[W]$ homogeneous of the same degree with $U_1\subseteq D(g)$ and such that $\rho(y)= \frac{f(y)}{g(y)}$ for all $y\in U_1$. Since $\varphi$ is homogeneous, the functions $f\circ \varphi,
g\circ\varphi \in k[V]$ are homogeneous of the same degree, and for $[u] \in \bar{\varphi}^{-1}(U_1)$, we have
\[ \rho(\bar{\varphi}([u])) = \frac{f(\varphi(u))}{g(\varphi(u))},\]
so that $\rho\circ\bar{\varphi}: U' \lra k$ is regular at $x$. As a result, the map $\bar{\varphi} : \cO_\varphi \lra \Proj(W)$ is a morphism. \end{proof}

\bigskip

\section{Modules for Infinitesimal Group Schemes and Maps to Grassmannians} \label{S:MIG}
Let $\cG$ be a infinitesimal group scheme. In this section we associate to every $\cG$-module $M$ several morphisms $\msim^j_M$ which take values in Grassmannians and are defined on open
subsets of the projectivized rank variety $\Proj(V(\cG))$ of infinitesimal one-parameter subgroups of $\cG$.  

\bigskip

\subsection{Preliminaries} As before, $k$ denotes an algebraically closed field. Let $V$ be an $n$-dimensional $k$-vector space with basis $\{v_1,\ldots,v_n\}$. Given $d \in \{1,\ldots,n\}$, we denote by $\cS(d)$ the set of $d$-element subsets of $\{1,\ldots,n\}$. For $J \in \cS(d)$
we put $V_J := \bigoplus_{j\in J}kv_j$ and define
\[ v_J := v_{j_1}\wedge v_{j_2}\wedge \cdots \wedge v_{j_d} \in \bigwedge^d(V),\]
where $j_1<j_2<\cdots<j_d$ belong to $J$. For an endomorphism $f : V \lra V$, we denote by $\bigwedge^d(f)$ the unique endomorphism of $\bigwedge^d(V)$
such that $\bigwedge^d(f)(w_1\wedge w_2\wedge \cdots \wedge w_d) = f(w_1)\wedge f(w_2)\wedge \cdots \wedge f(w_d)$ for all $w_1,\ldots, w_d \in V$.
There results a homogeneous morphism $\End_k(V) \lra \End_k(\bigwedge^d(V)) \ ; \ f \mapsto \bigwedge^d(f)$ of degree $d$.

\bigskip

\begin{Lem} \label{Pr1} Given $d \in \{1,\ldots,n\}$ and $J \in \cS(d)$, the following statements hold:
\begin{enumerate}
\item The set $\cO_J := \{[f] \in \PP(\End_k(V)) \ ; \ \bigwedge^d(f)(v_J) \ne 0\}$ is an open subset of $\PP(\End_k(V))$.
\item The map $\overline{\rm ev}_J : \cO_J \lra \PP(\bigwedge^d(V)) \ ; \ [f] \mapsto [\bigwedge^d(f)(v_J)]$ is a morphism of varieties.
\end{enumerate} \end{Lem}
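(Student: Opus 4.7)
The plan is to realize Lemma \ref{Pr1} as a direct application of Lemma \ref{HM1}, by exhibiting the map $f\mapsto \bigwedge^d(f)(v_J)$ as a homogeneous morphism between two conical affine varieties in the sense of the previous subsection.

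First I would view both $V':=\End_k(V)$ and $W':=\bigwedge^d(V)$ as conical varieties via the canonical scaling action of $k^\times$, so that $V'=V'_1$ and $W'=W'_1$ (the weight-$0$ part is reduced to the origin). Then I define
\[ \ev_J : \End_k(V) \lra \bigwedge^d(V) \ ; \ f \longmapsto \bigwedge^d(f)(v_J).\]
Since the assignment $f \mapsto \bigwedge^d(f)$ is, in coordinates with respect to the basis $\{v_I\}_{I\in\cS(d)}$, given by the $d{\times}d$-minors of the matrix of $f$, the map $\ev_J$ is polynomial, and in particular a morphism of affine varieties.

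Next I verify homogeneity of degree $d$. This is immediate from the identity $\bigwedge^d(\alpha f)=\alpha^d\bigwedge^d(f)$ valid for all $\alpha\in k^\times$ and $f\in \End_k(V)$, which gives
\[ \ev_J(\alpha.f) = \alpha^d\dact \ev_J(f) \qquad \forall\ \alpha\in k^\times,\ f\in \End_k(V).\]
Hence $\ev_J$ is a homogeneous morphism of conical varieties in the sense of Section \ref{S:MHP}. Since $W'_0=(0)$, the associated open set of Lemma \ref{HM1}(1) is
\[ \cO_{\ev_J} = \{[f]\in \PP(\End_k(V)) \ ; \ \ev_J(f)\ne 0\} = \cO_J,\]
which establishes (1). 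For (2), Lemma \ref{HM1}(2) provides the morphism $\overline{\ev_J} : \cO_J \lra \Proj(W')=\PP(\bigwedge^d(V))$, $[f]\mapsto [\bigwedge^d(f)(v_J)]$, which is exactly $\overline{\ev}_J$.

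There is no real obstacle here: the only point that needs any verification is that $\ev_J$ is a morphism of affine varieties, and this is a routine observation once one expresses $\bigwedge^d(f)(v_J)$ as the vector whose coordinates in $\{v_I\}_{I\in\cS(d)}$ are the $d{\times}d$-minors of $f$ indexed by rows in $I$ and columns in $J$. Everything else is a direct invocation of Lemma \ref{HM1}.
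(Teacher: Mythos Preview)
Your proof is correct and follows essentially the same approach as the paper: both realize $\ev_J$ as a homogeneous morphism of degree $d$ between conical varieties and then invoke Lemma~\ref{HM1}. The only cosmetic difference is that the paper phrases the homogeneity check by factoring $\ev_J$ as the composite of $f\mapsto\bigwedge^d(f)$ (degree $d$) with evaluation at $v_J$ (degree $1$), whereas you verify homogeneity directly via $\bigwedge^d(\alpha f)=\alpha^d\bigwedge^d(f)$.
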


\begin{proof} Being the composite of homogeneous morphisms of degrees $d$ and $1$, the map
\[ {\rm ev}_J : \End_k(V) \lra \bigwedge^d(V) \ \ ; \ \ f \mapsto \bigwedge^d(f)(v_J)\]
is a homogeneous morphism of conical varieties of degree $d$. Since $\cO_J=\cO_{{\rm ev}_J}$, our assertions follow from Lemma \ref{HM1}. \end{proof}

\bigskip
\noindent
Note that $\bigcup_{J\in \cS(d)} \cO_J = \{ [f] \in \PP(\End_k(V)) \ ; \ \rk(f)\ge d\}$. Consequently, the subset
\[ \PP(\End_k(V))_d := \{[f] \in \PP(\End_k(V)) \ ; \ \rk(f)=d\}\]
of $\PP(\End_k(V))$ is locally closed. Thus, $\PP(\End_k(V))_d$ is a quasi-projective variety, which, being an orbit under the canonical action of $\GL(V)\!\times\!\GL(V)$, is irreducible. Setting $\cU_J := \cO_J\cap\PP(\End_k(V))_d$, we obtain an open covering $\PP(\End_k(V))_d=\bigcup_{J \in \cS(d)} \cU_J$. 

We let
\[ \mspl_V : \Gr_d(V) \lra \PP(\bigwedge^d(V)) \ \ ; \ \ W \mapsto \bigwedge^d(W)\]
be the {\it Pl\"ucker embedding}, which identifies $\Gr_d(V)$ with the closed subset $\im \mspl_V \subseteq \PP(\bigwedge^d(V))$.

According to Lemma \ref{Pr1}, the map
\[ \overline{\rm ev}_J : \cU_J \lra \PP(\bigwedge^d(V)) \ \ ; \ \ [f] \mapsto [\bigwedge^d(f)(v_J)]\]
is a morphism of varieties for every $J \in \cS(d)$. These maps can be glued to a morphism $\PP(\End_k(V))_d \lra \Gr_d(V)$.

\bigskip

\begin{Prop} \label{Pr2} Let $d \in \{1,\ldots,n\}$. Then the following statements hold:
\begin{enumerate}
\item The map
\[\overline{\rm ev} : \PP(\End_k(V))_d \lra \PP(\bigwedge^d(V)) \ \  ; \ \  [f] \mapsto \bigwedge^d(f(V))\]
is a morphism of varieties.
\item The map
\[\overline{\im} : \PP(\End_k(V))_d \lra \Gr_d(V) \ \ ; \ \ [f] \mapsto f(V)\]
is a morphism of varieties. \end{enumerate} \end{Prop}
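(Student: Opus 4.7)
The plan is to use the local morphisms $\overline{\rm ev}_J : \cU_J \lra \PP(\bigwedge^d(V))$ provided by Lemma \ref{Pr1} and glue them along the open cover $\PP(\End_k(V))_d = \bigcup_{J \in \cS(d)} \cU_J$. The crucial observation is that on the rank-$d$ locus these local maps admit a coordinate-free interpretation that matches the prescription of $\overline{\rm ev}$.

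For part (1), I would first argue that if $[f] \in \cU_J$, then by definition $\bigwedge^d(f)(v_J) = f(v_{j_1})\wedge \cdots \wedge f(v_{j_d}) \neq 0$, so the vectors $f(v_{j_1}),\ldots,f(v_{j_d})$ are linearly independent in $f(V)$. Since $\rk(f)=d$, they form a basis of $f(V)$, and hence the nonzero decomposable vector $\bigwedge^d(f)(v_J)$ spans the one-dimensional line $\bigwedge^d(f(V)) \subseteq \bigwedge^d(V)$. This yields the identity
\[ \overline{\rm ev}_J([f]) = \bigl[\bigwedge\nolimits^d(f)(v_J)\bigr] = \bigwedge\nolimits^d(f(V)) \qquad \forall \, [f]\in \cU_J. \]
In particular, the right-hand side depends only on $[f]$ and not on $J$, so for any $J,J' \in \cS(d)$ the morphisms $\overline{\rm ev}_J$ and $\overline{\rm ev}_{J'}$ agree on the overlap $\cU_J \cap \cU_{J'}$. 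Standard gluing of morphisms of varieties then produces a well-defined morphism $\overline{\rm ev} : \PP(\End_k(V))_d \lra \PP(\bigwedge^d(V))$ whose values are precisely $\bigwedge^d(f(V))$.

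For part (2), I would invoke the Pl\"ucker embedding $\mspl_V : \Gr_d(V) \hookrightarrow \PP(\bigwedge^d(V))$, which identifies $\Gr_d(V)$ with the closed subvariety of decomposable lines. Since $\overline{\rm ev}([f]) = \bigwedge^d(f(V))$ is decomposable for every $[f]$, the image of $\overline{\rm ev}$ is contained in $\im \mspl_V$. Corestricting along the closed immersion and composing with $\mspl_V^{-1}$ gives the desired morphism $\overline{\im} : \PP(\End_k(V))_d \lra \Gr_d(V)$ with $[f]\mapsto f(V)$.

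The main (and essentially only nontrivial) step is the rank argument showing that on $\cU_J$ the local formula computes the Pl\"ucker coordinate of the whole image $f(V)$; this is where the assumption $\rk(f)=d$ is used to promote linear independence of the $f(v_{j_i})$ to a spanning property. Once this is established, both statements reduce to routine gluing and to the closed-immersion property of $\mspl_V$.
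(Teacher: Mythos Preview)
Your proposal is correct and follows essentially the same approach as the paper: both use the rank-$d$ condition to show that on each $\cU_J$ the local value $[\bigwedge^d(f)(v_J)]$ equals the coordinate-free quantity $\bigwedge^d(f(V))$, glue the $\overline{\rm ev}_J$ accordingly, and then deduce that $\overline{\im}$ is a morphism via the identity $\mspl_V\circ\overline{\im}=\overline{\rm ev}$ and the closed-immersion property of the Pl\"ucker embedding.
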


\begin{proof} (1) Suppose that $f \in \cU_J\cap\cU_{J'}$ for two subsets $J,J' \in \cS(d)$. Then $\bigwedge^d(f)(v_J),\bigwedge^d(f)(v_{J'})$ are nonzero elements of $\bigwedge^d(V)$ belonging to
the subspace $\bigwedge^d(f(V)) \subseteq \bigwedge^d(V)$. Since $\rk(f)= d$, the space $\bigwedge^d(f(V))$ is one-dimensional, so that
\[ \overline{\rm ev}_J([f]) = [\bigwedge^d(f)(v_J)] = [\bigwedge^d(f)(v_{J'})] = \overline{\rm ev}_{J'}([f]).\]
Owing to Lemma \ref{Pr1}, the unique map $\overline{\rm ev} : \PP(\End_k(V))_d \lra \PP(\bigwedge^d(V))$, given by
\[ \overline{\rm ev}|_{\cU_J} = \overline{\rm ev}_J\]
for all $J \in \cS(d)$ is a morphism of varieties.

(2) Let $J \in \cS(d)$ be a $d$-element subset. Given $[f] \in \cU_J$, we have $\bigwedge^d(f)(v_J)\ne 0$. Consequently, $\dim_kf(V_J) = d = \dim_k f(V)$, so that
\[ (\mspl_V \circ\overline{\im}) ([f]) = \bigwedge^d(f(V)) = \bigwedge^d(f(V_J)) = [\bigwedge^d(f)(v_J)] = \overline{\rm ev}([f]).\]
As a result, $\overline{\rm ev} = \mspl_V\circ\overline{\im}$. Since $\overline{\rm ev}$ is a morphism, so is $\overline{\im}$. \end{proof}

\bigskip
\noindent 
Let $f \in \End_k(V)$ and denote by $A(f)=(a_{ij}) \in {\rm Mat}_n(k)$ the ($n\!\times\!n$)-matrix representing $f$ with respect to the basis $\{v_1,\ldots,v_n\}$.  We consider
the associated map $g:=\bigwedge^d(f) \in \End_k(\bigwedge^d(V))$. Given $(K',K) \in \cS(d)^2$, we let $A(f)_{(K',K)}$ be the $(K',K)$-minor of $A$. Then we have
\[ g(v_K) = \sum_{K'\in\cS(d)}\det(A(f)_{(K',K)})v_{K'} \ \ \ \ \ \ \ \ \forall \ K \in \cS(d).\]
Thus, the coordinates of $\overline{\ev}([f])$ for $[f] \in \cU_J$ are $[\det(A(f)_{(I,J)})]_{I\in\cS(d)}$. As a result, the map 
\[\overline{\ev} : \PP(\End_k(V))_d \lra \PP(\bigwedge^d(V)) \ \ ; \ \ [f] \mapsto \bigwedge^d(f(V))\]
is locally given by the homogeneous polynomials $(\det((X_{ij})_{(I,J)}))_{I\in \cS(d)}$, where $\det((X_{ij})_{(I,J)}) \in k[X_{ij} \ ; \ 1 \le i,j \le n]_d$. 

\bigskip
 
\begin{Example} {\it Let $V$ be a vector space of dimension $n\ge 2$. Then the morphism 
\[ \overline{\ev} : \PP(\End_k(V))_1 \lra \PP(V)\]
is not homogeneous.} 

\medskip
\noindent
Choosing a basis of $V$, we consider $(n\!\times\!n)$-matrices and observe that 
\[ \overline{\ev}: \PP(\Mat_n(k))_1 \lra \PP^{n-1}\] 
associates to each element $[A] \in  \PP(\Mat_n(k))_1$ its one-dimensional column space. 

For $j \in \{1,\ldots,n\}$, there is a morphism 
\[ \psi_j : \PP^{n-1} \lra \PP(\Mat_n(k))_1\]
sending $[x] \in \PP^{n-1}$ to the class of the matrix, whose $\ell$-th colum is of the form $\delta_{\ell,j}x^{\rm tr}$. We have $\overline{\ev}\circ \psi_j = \id_{\PP^{n-1}}$.

Suppose that there are homogeneous polynomials $f_1,\ldots,f_n \in k[X_{ij} \ ; \ 1\!\le i,j\!\le\!n]$ of degree $d$ that define the morphism $\overline{\ev}$. Then $\overline{\ev}\circ \psi_j $ is also defined
by homogeneous polynomials of degree $d$, and an application of Corollary \ref{MPS4} conjunction with the above identity implies $d= \deg(\overline{\ev}\circ \psi_j)=\deg(\id_{\PP^{n-1}})=1$. We 
thus write
\[ f_\ell = \sum_{i,j=1}^n \alpha_{ij\ell}X_{ij}\]
for $\ell \in \{1,\ldots,n\}$. Since, for every $j \in \{1,\ldots,n\}$, we have 
\[ (x_1\!:\!x_2\!:\cdots :\!x_n) = [\sum_{i=1}^n\alpha_{ij\ell}x_i]_{1\le \ell \le n}\]
for all $(x_1\!:\!x_2\!:\cdots :\!x_n) \in \PP^{n-1}$, it follows from Lemma \ref{MPS2} that there exist $\lambda_j \in k^\times$ with $ \lambda_jX_\ell =\sum_{i=1}^n\alpha_{ij\ell}X_i$. Consequently,
$\alpha_{ij\ell} = \delta_{\ell,i}\lambda_j$, so that
\[ f_\ell = \sum_{j=1}^n \lambda_jX_{\ell j} \ \ \ \ \ \ \ \ \ 1\le \ell \le n.\]
Now let $x:=(x_1,x_2,\ldots,x_n) \in k^n\!\smallsetminus\!\{0\}$ be such that $\sum_{j=1}^n\lambda_j x_j =0$. If $A_x$ denotes the $(n\!\times\!n)$-matrix all whose row vectors are $x$, then 
$[A_x] \in \PP(\Mat_n(k))_1$, while 
\[ (f_1(A_x),\ldots, f_n(A_x)) = 0,\]
a contradiction. \end{Example} 

\bigskip

\begin{Remark} With considerably more effort one can show that the morphism
\[ \overline{\ev} : \PP(\End_k(V))_d \lra \PP(V)\]
is not homogeneous for an arbitrary $d \in \{1,\ldots, n\!-\!1\}$. Since the morphisms of interest will arise via ``composites'' of $\overline{\ev}$ with restrictions of representations $\varrho : A \lra 
\End_k(V)$, this appears to be the reason why our methods work best for those finite group schemes that are analogs of elementary abelian groups. \end{Remark}

\bigskip
\noindent
For future reference, we record a duality between the vector spaces $\bigwedge^d(V)$ and $\bigwedge^d(V^\ast)$. Let $\{\delta_1,\ldots,\delta_n\}$ be the basis of $V^\ast$ that is dual to 
$\{v_1,\ldots,v_n\}$ and denote by $\delta_{I,J}$ the Kronecker symbol of $\cS(d)^2$.

\bigskip

\begin{Lem} \label{Pr3} The following statements hold:
\begin{enumerate}
\item For $I,J \in \cS(d)$, we have
\[ \det((\delta_i(v_j))_{(I,J)}) = \delta_{I,J}. \]
\item The unique bilinear form $(\, ,\,) : \bigwedge^d(V^\ast)\!\times\!\bigwedge^d(V) \lra k$, given by
\[(f_1\wedge f_2\wedge\cdots\wedge f_d,w_1\wedge w_2\wedge\cdots\wedge w_d) \mapsto \det((f_i(w_j)))\]
is non-degenerate. \end{enumerate}\end{Lem}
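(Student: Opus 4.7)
The plan is to prove (1) by a direct computation from the definition of the minor, and then deduce (2) by using (1) to exhibit dual bases under the pairing.

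For (1), write $I = \{i_1 < \cdots < i_d\}$ and $J = \{j_1 < \cdots < j_d\}$. The minor $(\delta_i(v_j))_{(I,J)}$ is the $d \times d$ matrix whose $(k,\ell)$-entry is $\delta_{i_k}(v_{j_\ell}) = \delta_{i_k, j_\ell}$ (ordinary Kronecker delta). If $I = J$, this is the identity matrix, whose determinant is $1$. If $I \ne J$, then either $I \smallsetminus J$ or $J \smallsetminus I$ is non-empty; say $i_k \in I \smallsetminus J$. Then for every $\ell$ we have $i_k \ne j_\ell$, so the entire $k$-th row of the minor is zero and the determinant vanishes. This yields $\det((\delta_i(v_j))_{(I,J)}) = \delta_{I,J}$.

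For (2), I first need to justify that the bilinear form is well-defined by the stated formula. The map $(V^\ast)^d \times V^d \lra k$ sending $((f_1,\ldots,f_d),(w_1,\ldots,w_d)) \mapsto \det((f_i(w_j)))$ is multilinear in both tuples of arguments, and the standard identity $\det(\sigma\cdot A) = \mathrm{sgn}(\sigma)\det(A)$ for row and column permutations shows it is alternating in the $f_i$'s and in the $w_j$'s separately. By the universal property of the exterior power, it descends uniquely to a bilinear form on $\bigwedge^d(V^\ast) \times \bigwedge^d(V)$.

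For $I = \{i_1 < \cdots < i_d\} \in \cS(d)$, set $\delta_I := \delta_{i_1}\wedge\cdots\wedge\delta_{i_d}$. Then $\{\delta_I\}_{I \in \cS(d)}$ is a basis of $\bigwedge^d(V^\ast)$ while $\{v_J\}_{J \in \cS(d)}$ is a basis of $\bigwedge^d(V)$, and part (1) gives $(\delta_I, v_J) = \det((\delta_{i_k}(v_{j_\ell}))) = \delta_{I,J}$. Thus $\{\delta_I\}$ and $\{v_J\}$ are dual bases with respect to the pairing, which immediately implies that the induced maps $\bigwedge^d(V^\ast) \lra \bigwedge^d(V)^\ast$ and $\bigwedge^d(V) \lra \bigwedge^d(V^\ast)^\ast$ are isomorphisms. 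Hence the form is non-degenerate.

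There is no real obstacle here: the content of the lemma is essentially the observation that the Plücker coordinates of the standard basis elements $v_J$ in $\bigwedge^d(V)$ are dual to those of the $\delta_I$ in $\bigwedge^d(V^\ast)$. The only subtle point is to spell out that the formula for the pairing depends on the choice of representatives only up to the sign conventions absorbed by the determinant, which is precisely what the alternating multilinearity check provides.
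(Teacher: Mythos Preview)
Your proof is correct. The paper actually records this lemma without proof, treating it as a standard linear algebra fact; your argument supplies precisely the expected verification --- computing the minor directly for part (1), and then using (1) to exhibit dual bases $\{\delta_I\}$ and $\{v_J\}$ under the pairing for part (2).
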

  
\bigskip

\subsection{The morphisms $\msim^j_M$} 
From now on $k$ is assumed to be an algebraically closed field of characteristic $p>0$. If $\cG$ is a finite group scheme over $k$ with coordinate ring $k[\cG]$, then the dual Hopf algebra $k\cG := k
[\cG]^\ast$ is the {\it algebra of measures on $\cG$}. By general theory, finite-dimensional representations of $\cG$ naturally correspond to finite-dimensional $k\cG$-modules. We shall henceforth 
write $\modd \cG$ for the category of finite-dimensional $\cG$-modules and use both interpretations interchangeably. We refer to \cite{Wa} for these matters.

If $M$ is a finite-dimensional $\cG$-module and $x \in k\cG$, we let
\[ x_M : M \lra M \ \ ; \ \ v \mapsto x\dact v\]
be the linear transformation effected by $x$.

Given $r \in \NN$, we denote by $\GG_{a(r)}$ the $r$-th Frobenius kernel of the additive group $\GG_a = \Spec_k(k[T])$. We consider an infinitesimal group scheme $\cG$ of height $r$ along with the 
set $V(\cG)$ of its one-parameter subgroups. By definition, the elements of $V(\cG)$ are homomorphisms $\varphi : \GG_{a(r)} \lra \cG$ of group schemes. General theory then shows that
\[ V(\cG)=\Hom_{\rm Hopf}(k\GG_{a(r)},k\cG) \subseteq \Hom_k(k\GG_{a(r)},k\cG):=\fM\]
is an affine variety. In fact, $V(\cG)$ is the variety of $k$-rational points of the scheme of infinitesimal one-parameter subgroups introduced in \cite{SFB1}.

Recall that the diagonalizable group $k^\times$ acts on $\GG_{a(r)}$ via automorphisms. The action on $\GG_{a(r)}$ corresponds to the operation of $k^\times$ on the coordinate ring $k[\GG_{a(r)}]=k
[T]/(T^{p^r})$ that is given by $\alpha\dact t^i := \alpha^it^i$, where $t:= T\!+\!(T^{p^r})$. Consequently, $k^\times$ operates on $k\GG_{a(r)}=k[\GG_{a(r)}]^\ast$ via automorphisms of Hopf algebras and 
with set of weights $\{-(p^r\!-\!1),\ldots,0\}$. We endow $\fM$ with the structure of a $k^\times$-module via
\[ (\alpha\dact f)(u) := f(\alpha^{-1}.u) \ \ \ \ \ \ \ \ \forall \ \alpha \in k^\times, \, f \in \fM,\, u \in k\GG_{a(r)},\]
so that $\fM = \bigoplus_{i=0}^{p^r-1}\fM_i$, with weight spaces
\[\fM_i = \{f \in \fM \ ; \ f((k\GG_{a(r)})_j)=(0) \ \text{for} \ j\ne -i\} \cong \Hom_k((k\GG_{a(r)})_{-i},k\cG).\]
Note that $V(\cG)$ is $k^\times$-stable and that $V(\cG)_0 = \{\varepsilon\}$, where $\varepsilon : k\GG_{a(r)} \lra k \subseteq k\cG$ is the co-unit. Let $u_{r-1} \in k\GG_{a(r)}$ be the linear
map given by $u_{r-1}(t^j)=\delta_{j,p^{r-1}}$ for $0\le j \le p^r\!-\!1$, so that $u_{r-1} \in k\GG_{a(r)}$ is homogeneous of degree $-p^{r-1}$.

Let $j \in \{1,\ldots,p\!-\!1\}$. Given a $\cG$-module $M$, we let
\[ \rk^j(M) := \max\{ \rk(\varphi(u_{r-1})^j_M) \ ; \ \varphi \in V(\cG)\}\]
be the {\it generic $j$-rank} of $M$. The number $\rk(M):=\rk^1(M)$ is called the {\it generic rank} of $M$. 

For a vector space $V$, we let $\Gr_0(V)$ be the variety consisting of one point. 

\bigskip

\begin{Thm} \label{Mo1} Let $M$ be a $\cG$-module such that $\rk^j(M)=d_j$. Then the following statements hold:
\begin{enumerate}
\item $U_{M,j} := \{ [\varphi] \in \Proj(V(\cG)) \ ; \ \rk(\varphi(u_{r-1})_M^j) = d_j\}$ is an open subset of $\Proj(V(\cG))$.
\item The map
\[ \msim^j_M : U_{M,j} \lra \Gr_{d_j}(M) \ \ ; \ \  [\varphi] \mapsto \im \varphi(u_{r-1})^j_M\] 
is a morphism of varieties. \end{enumerate} \end{Thm}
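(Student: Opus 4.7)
The plan is to realize $\msim^j_M$ as the composite of a homogeneous morphism of conical varieties $V(\cG)\lra\End_k(M)$, descended to projective space via Lemma \ref{HM1}, with the morphism $\overline{\im}$ of Proposition \ref{Pr2}(2).

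First I would introduce
\[ \Phi_j : V(\cG) \lra \End_k(M), \qquad \varphi \mapsto \varphi(u_{r-1})_M^j, \]
obtained as the composite of the linear evaluation $\fM \lra k\cG,\ f \mapsto f(u_{r-1})$, the linear representation $k\cG \lra \End_k(M),\ x\mapsto x_M$, and the $j$-th power map $\End_k(M) \lra \End_k(M),\ A\mapsto A^j$. Equipping $\End_k(M)$ with its scalar $k^\times$-action (so $\End_k(M)_0=(0)$), I would verify that $\Phi_j$ is a homogeneous morphism of conical varieties of degree $jp^{r-1}$. The key computation is the weight identity $\alpha^{-1}\dact u_{r-1} = \alpha^{p^{r-1}}u_{r-1}$ (since $u_{r-1}$ has weight $-p^{r-1}$), which shows that evaluation at $u_{r-1}$ is homogeneous of degree $p^{r-1}$; the representation is homogeneous of degree $1$; and the $j$-th power map is homogeneous of degree $j$.

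For assertion (1), the locus $\{A \in \End_k(M) : \rk(A) \ge d_j\}$ is open in $\End_k(M)$, being the complement of the vanishing locus of all $(d_j\!\times\!d_j)$-minors. Its preimage under $\Phi_j$ is thus an open, $k^\times$-stable subset of $V(\cG)$; by maximality of $d_j=\rk^j(M)$ this preimage coincides with the locus where $\rk(\Phi_j(\varphi))$ equals $d_j$ exactly. Passing to $\Proj(V(\cG))$ then identifies $U_{M,j}$ as the open subset claimed.

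For assertion (2), the case $d_j=0$ is trivial since $\Gr_0(M)$ is a point and $\msim^j_M$ is the unique constant morphism. Assuming $d_j\ge 1$, Lemma \ref{HM1} applied to $\Phi_j$ produces an open subset $\cO_{\Phi_j}\subseteq \Proj(V(\cG))$ together with a morphism $\bar\Phi_j : \cO_{\Phi_j} \lra \PP(\End_k(M)),\ [\varphi]\mapsto [\varphi(u_{r-1})_M^j]$. By construction $U_{M,j}\subseteq\cO_{\Phi_j}$ and $\bar\Phi_j(U_{M,j})$ lies in the locally closed stratum $\PP(\End_k(M))_{d_j}$, so the composite $\overline{\im}\circ\bar\Phi_j|_{U_{M,j}}$ with the morphism $\overline{\im} : \PP(\End_k(M))_{d_j} \lra \Gr_{d_j}(M)$ of Proposition \ref{Pr2}(2) yields the desired morphism $\msim^j_M$. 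The principal obstacle is the weight/degree bookkeeping identifying $\Phi_j$ as a homogeneous morphism of conical varieties; once this is in place the result is a formal combination of Lemma \ref{HM1} and Proposition \ref{Pr2}.
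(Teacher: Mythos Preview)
Your proposal is correct and follows essentially the same route as the paper: the paper defines the same map $\omega^j(\varphi)=\varrho(\varphi(u_{r-1})^j)$, verifies it is homogeneous of degree $jp^{r-1}$ via the weight of $u_{r-1}$, uses the openness of the rank-$\ge d_j$ locus for (1), and for (2) invokes Lemma~\ref{HM1} to descend to $\bar{\omega}^j$ and then composes with $\overline{\im}$ from Proposition~\ref{Pr2}. Your explicit observation that $U_{M,j}\subseteq\cO_{\Phi_j}$ when $d_j\ge 1$ is a small clarification the paper leaves implicit.
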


\begin{proof} (1) Recall that $k\cG$ has the structure of a conical variety via the restriction of the scalar multiplication. We consider the evaluation map
\[ u_{r-1}^\ast : V(\cG) \lra k\cG \ \ ; \ \ \varphi \mapsto \varphi(u_{r-1}).\]
Observing
\[ u_{r-1}^\ast(\alpha\dact\varphi) = \varphi(\alpha^{-1}.u_{r-1}) = \alpha^{p^{r-1}}\varphi(u_{r-1})=\alpha^{p^{r-1}}u^\ast_{r-1}(\varphi)\]
for all $\alpha \in k^\times$, we conclude that $u_{r-1}^\ast$ is a homogeneous morphism of degree $p^{r-1}$.

Let $\varrho : k\cG \lra \End_k(M)$ be the representation afforded by $M$. By the above, the map
\[ \omega^j : V(\cG) \lra \End_k(M) \ \ ; \ \ \varphi \mapsto \varrho(\varphi(u_{r-1})^j)\]
is a homogeneous morphism of conical affine varieties of degree $jp^{r-1}$. As noted in Section 2.1,
\[ O_{d_j} := \{[f] \in \PP(\End_k(M)) \ ; \ \rk(f) \ge d_j\}\]
is an open subset of $\PP(\End_k(M))$. Since $\omega^j$ is homogeneous, the set
\[ U_{M,j} = \{ [\varphi] \in \Proj(V(\cG)) \ ; \ [\omega^j(\varphi)] \in O_{d_j}\}\]
is open in $\Proj(V(\cG))$. 

(2)  If $d_j=0$, then $\Gr_{d_j}(M)$ is a point. Hence we may assume that $d_j>0$. Lemma \ref{HM1}(2) now implies that
\[ \overline{\omega}^j: U_{M,j} \lra \PP(\End_k(M)) \ \ ; \ \ [\varphi] \mapsto [\varrho(\varphi(u_{r-1})^j)],\]
is a morphism, whose image is contained in $\PP(\End_k(V))_{d_j}$. In view of
\[ \msim^j_M = \overline{\im} \circ \overline{\omega}^j,\]
our assertion follows from Proposition \ref{Pr2}. \end{proof}

\bigskip
\noindent
By combining $\msim^j_M$ with the Pl\"ucker embedding $\mspl_M : \Gr_{d_j}(M) \lra \PP(\bigwedge^{d_j}(M))$, we obtain a morphism
\[ \mspl_M\circ \msim_M^j : U_{M,j} \lra \PP(\bigwedge^{d_j}(M))\]
such that $\mspl_M\circ \msim_M^j = \overline{\rm ev}\circ \omega^j$. The Example of Section 2.1 indicates that this morphism may not be homogeneous.

Given a $\cG$-module $M$ and a one-parameter subgroup $\varphi \in V(\cG)$, we denote by $\varphi^\ast(M)$ the $k\GG_{a(r)}$-module with
underlying $k$-space $M$ and action given by
\[ a\dact m := \varphi(a)m \ \ \ \ \ \ \ \forall \ a \in k\GG_{a(r)}, \, m \in M.\]
In \cite{SFB2} the authors define a scheme, whose variety of $k$-rational points is the {\it rank variety} 
\[ V(\cG)_M := \{ \varphi \in V(\cG) \ ; \ \varphi^\ast(M)|_{k[u_{r-1}]} \ \text{is not projective}\}\]
of $M$. Note that $V(\cG)_M$ is a conical subset of $V(\cG)$. If $V(\cG)_M \subsetneq V(\cG)$, then $\rk^j(M) = (p\!-\!j)\frac{\dim_kM}{p}$ and $U_{M,j} = 
\Proj(V(\cG))\!\smallsetminus\!\Proj(V(\cG)_M)$.  

Following \cite{FPe10}, we say that a $\cG$-module $M$ has {\it constant $j$-rank}, provided $U_{M,j} = \Proj(V(\cG))$. Modules of constant $1$-rank are said to be of {\it constant
rank}. We record the following direct consequence of Theorem \ref{Mo1}.

\bigskip

\begin{Cor} \label{Mo2} Let $M$ be a $\cG$-module of constant $j$-rank $\rk^j(M)=d_j$. Then the map
\[ \msim^j_M : \Proj(V(\cG)) \lra \Gr_{d_j}(M) \ \ ; \ \  [\varphi] \mapsto \im \varphi(u_{r-1})^j_M\]
is a morphism of varieties. \hfill $\square$ \end{Cor}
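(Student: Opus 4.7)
The plan is to simply unpack the definition of constant $j$-rank and feed it directly into Theorem \ref{Mo1}. The definition recalled in the paragraph immediately preceding the corollary says that $M$ has constant $j$-rank precisely when $U_{M,j} = \Proj(V(\cG))$. Under the hypothesis $\rk^j(M) = d_j$, this identifies the open subset introduced in Theorem \ref{Mo1}(1) with the entire base variety $\Proj(V(\cG))$.

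With that identification in hand, Theorem \ref{Mo1}(2) immediately supplies the desired morphism: it guarantees that $\msim^j_M : U_{M,j} \lra \Gr_{d_j}(M)$, $[\varphi] \mapsto \im \varphi(u_{r-1})^j_M$, is a morphism of varieties. Substituting $U_{M,j} = \Proj(V(\cG))$ gives exactly the statement of the corollary. There is really no obstacle here, since the corollary is a tautological restatement of Theorem \ref{Mo1} under the constant $j$-rank hypothesis. All of the nontrivial geometric content—namely the factorization $\msim^j_M = \overline{\im}\circ\overline{\omega}^j$, the fact that $\overline{\omega}^j$ is a morphism (via the homogeneity of $u_{r-1}^\ast$ and Lemma \ref{HM1}), and the morphism property of $\overline{\im}$ (from Proposition \ref{Pr2})—was already absorbed in the proof of Theorem \ref{Mo1}, and need not be revisited.
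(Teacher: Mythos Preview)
Your proposal is correct and matches the paper's own treatment: the paper records this as a ``direct consequence of Theorem \ref{Mo1}'' and marks it with an end-of-proof symbol without further argument. Your unpacking of the constant $j$-rank hypothesis as $U_{M,j}=\Proj(V(\cG))$ followed by an appeal to Theorem \ref{Mo1}(2) is exactly what is intended.
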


\bigskip

\begin{Remark} One can equally well consider maps given by cokernels and kernels. \end{Remark}

\bigskip

\section{Morphisms for restricted Lie Algebras} \label{S:MRLA} In this section we consider representations of  restricted Lie algebras. By general theory, these algebras correspond to 
infinitesimal groups of height $1$, a class whose varieties of infinitesimal one-parameter subgroups are particularly tractable.

Throughout this section, $(\fg,[p])$ denotes a finite-dimensional restricted Lie algebra. The finite-dimensional quotient
\[ U_0(\fg) := U(\fg)/(\{x^p\!-\!x^{[p]} \ ; \ x \in \fg\})\]
of the universal enveloping algebra $U(\fg)$ of $\fg$ is referred to as the {\it restricted enveloping algebra} of $\fg$. The representations of the restricted Lie algebra $(\fg,[p])$ are the modules for the 
Hopf algebra $U_0(\fg)$. We refer the reader to \cite{SF} concerning restricted Lie algebras and their representations.

\bigskip

\subsection{Preliminaries}
The conical Zariski closed subset 
\[ V(\fg) := \{ x \in \fg \ ; \ x^{[p]} =0\}\]
is referred to as the {\it nullcone} of $\fg$. We say that $(\fg,[p])$ is {\it $p$-trivial}, provided $V(\fg)=\fg$. By Engel's theorem such a Lie algebra is necessarily nilpotent.

General theory shows that the cone $V(\fg)$ plays the role of $V(\cG)$. Thus, for $j \in \{1,\ldots,p\!-\!1\}$, the {\it generic $j$-rank} of a $U_0(\fg)$-module $M$ is given by
\[ \rk^j(M) := \max\{ \rk(x_M^j) \ ; \ x \in V(\fg)\},\]
and we say that $M$ has {\it constant $j$-rank}, provided $\rk(x^j_M)=\rk(y^j_M)$ for all $x,y \in V(\fg)\!\smallsetminus\!\{0\}$. In this setting, Theorem \ref{Mo1} reads as follows:

\bigskip

\begin{Cor} \label{RLA1} Let $M$ be a $U_0(\fg)$-module of generic $j$-rank $d_j$. Then the following statements hold:
\begin{enumerate}
\item $U_{M,j} := \{ [x] \in \PP(V(\fg)) \ ; \ \rk(x^j_M)=d_j\}$ is an open subset of $\PP(V(\fg))$.
\item The map
\[ \msim^j_M : U_{M,j} \lra \Gr_{d_j}(M) \ \ ; \ \ [x] \mapsto \im x^j_M\]
is a morphism of varieties. \hfill $\square$ \end{enumerate} \end{Cor}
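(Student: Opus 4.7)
The plan is to deduce this corollary as the height-one specialization of Theorem \ref{Mo1}. Restricted Lie algebras $(\fg,[p])$ correspond bijectively to infinitesimal group schemes $\cG$ of height $1$, in such a way that $k\cG = U_0(\fg)$ (cf.\ \cite{Wa}). Thus we may view the given $U_0(\fg)$-module $M$ as a $\cG$-module, and it suffices to show that the set $U_{M,j}$ and the map $\msim^j_M$ introduced in Theorem \ref{Mo1} agree, under a natural identification, with those of the statement.

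So first I would identify $V(\cG)$ with $V(\fg)$ as conical varieties. For $r=1$, the element $u_0 \in k\GG_{a(1)}$ defined by $u_0(t^j)=\delta_{j,1}$ is a primitive element, and dualization of the Hopf algebra structure of $k[T]/(T^p)$ shows that $u_0^p = 0$ in $k\GG_{a(1)}$. Consequently, for every $\varphi \in V(\cG) = \Hom_{\mathrm{Hopf}}(k\GG_{a(1)},k\cG)$, the image $\varphi(u_0)$ is a primitive, $p$-nilpotent element of $U_0(\fg)$, hence lies in $V(\fg) \subseteq \fg \subseteq U_0(\fg)$. Conversely, each $x \in V(\fg)$ determines a unique Hopf algebra map $\varphi_x : k\GG_{a(1)} \lra k\cG$ with $\varphi_x(u_0)=x$. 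The resulting bijection $V(\cG) \stackrel{\sim}{\lra} V(\fg),\ \varphi \mapsto \varphi(u_0)$, is an isomorphism of affine varieties. Since $p^{r-1}=1$, the weight of the $k^\times$-action on $V(\cG)$ matches the scalar action on $V(\fg)$, so the isomorphism is $k^\times$-equivariant and induces $\Proj(V(\cG)) \cong \PP(V(\fg))$.

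Under this identification, the operator $\varphi(u_{r-1})_M$ appearing in Theorem \ref{Mo1} equals $x_M$ for $x=\varphi(u_0) \in V(\fg)$, so ranks and images transport verbatim. In particular the generic $j$-rank in the sense of Section \ref{S:MIG} coincides with the $\rk^j(M)$ of the present section, and the set $U_{M,j}$ and morphism $\msim^j_M$ of Theorem \ref{Mo1} become exactly those of (1) and (2). The only substantive point is the verification that $V(\cG) \to V(\fg)$ is regular and $k^\times$-equivariant as a map of schemes, which is the expected obstacle; however this is standard Hopf-algebra duality for $\GG_{a(1)}$, whose functor of points sends an algebra $R$ to the set of primitive $p$-nilpotent elements of $R\otimes_k k\cG$, i.e.\ to the $R$-points of the subscheme $V(\fg)\subseteq \fg$.
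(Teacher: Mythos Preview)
Your proposal is correct and follows exactly the approach the paper intends: the corollary is stated with a $\square$ and no proof, the preceding text simply asserting that ``the cone $V(\fg)$ plays the role of $V(\cG)$'' so that Theorem~\ref{Mo1} specializes directly. You have merely spelled out the identification $V(\cG)\stackrel{\sim}{\to} V(\fg),\ \varphi\mapsto\varphi(u_0)$, and its $k^\times$-equivariance, that the paper leaves implicit.
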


\bigskip
\noindent
Modules of constant $1$-rank are referred to as being of constant rank. Given a $U_0(\fg)$-module $M$, we write $\rk(M):=\rk^1(M)$ as well as $\msim_M:=\msim^1_M$.

\bigskip

\begin{Lem} \label{RLA2} Let $M$ be a $U_0(\fg)$-module of constant rank $d$ such that $\msim_M: \PP(V(\fg)) \lra \Gr_d(M)$ is constant. If $\fg$ contains a non-abelian $p$-trivial subalgebra, then 
$d=0$. \end{Lem}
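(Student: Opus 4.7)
The plan is a contradiction argument: suppose $d>0$, and let $N\subseteq M$ be the $d$-dimensional subspace at which the constant morphism $\msim_M$ takes its value. Since $\msim_M([x])=\im x_M$, this translates into $\im x_M=N$ for every $x\in V(\fg)\!\smallsetminus\!\{0\}$. Let $\fh\subseteq\fg$ denote the given non-abelian $p$-trivial subalgebra, so $\fh\subseteq V(\fg)$. Then $\im x_M=N$ for every nonzero $x\in\fh$, and by linearity of the action map $\fh\to\End_k(M)$ one deduces the stronger statement $\fh\dact M\subseteq N$; in particular $N$ is $\fh$-stable.

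Next, I would choose $x,y\in\fh$ with $[x,y]\ne 0$, possible because $\fh$ is non-abelian. Since $\fh$ is $p$-trivial, $[x,y]\in \fh\!\smallsetminus\!\{0\}\subseteq V(\fg)\!\smallsetminus\!\{0\}$, so $\im [x,y]_M=N$. Writing $[x,y]=xy-yx$ in $U_0(\fg)$ and using $\fh\dact M\subseteq N$ yields
\[ N=\im [x,y]_M\subseteq x_M(y_M(M))+y_M(x_M(M))= x\dact N+y\dact N\subseteq \fh\dact N.\]
Therefore $\fh\dact N=N$.

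To derive a contradiction I invoke Engel's theorem: each $x\in\fh$ satisfies $x^p=x^{[p]}=0$ in $U_0(\fg)$, so $x$ acts nilpotently on $M$ and hence on the invariant subspace $N$. Thus $\fh$ operates on the finite-dimensional space $N$ by nilpotent endomorphisms, and if $N\ne 0$ Engel's theorem furnishes a strict flag $0=N_0\subsetneq N_1\subsetneq\cdots\subsetneq N_s=N$ of $\fh$-submodules with $\fh\dact N_i\subseteq N_{i-1}$; in particular $\fh\dact N\subseteq N_{s-1}\subsetneq N$, contradicting $\fh\dact N=N$. The main conceptual step is recognising that constancy of $\msim_M$ upgrades the pointwise identities $\im x_M=N$ to the global inclusion $\fh\dact M\subseteq N$, after which the commutator identity traps $[x,y]\dact M$ inside $\fh\dact N$; once this is set up, Engel's theorem closes the argument essentially automatically.
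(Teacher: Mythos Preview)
Your proof is correct and rests on the same two ingredients as the paper's: the commutator identity $[x,y]_M = x_My_M - y_Mx_M$ together with the constancy of $\msim_M$ forces the common image $N$ to satisfy $\fh\dact N = N$, and nilpotency of the $\fh$-action then kills $N$. The paper packages the second step slightly differently: it works with the restricted module $M|_{\fh}$, uses that $U_0(\fh)$ is local (so $\fh\subseteq\Rad(U_0(\fh))$), and shows by induction that $\im x_{M|_\fh}\subseteq\Rad^n(M|_\fh)$ for every $n\ge 1$, whence $\im x_{M|_\fh}=(0)$. Your direct appeal to Engel's theorem on the invariant subspace $N$ is a bit more economical and avoids the radical filtration; the paper's version makes the descent explicit through the Loewy series. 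Either way the argument is the same in substance.
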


\begin{proof} Let $\fh \subseteq \fg$ be a non-abelian $p$-trivial subalgebra. Then $U_0(\fh)$ is a local algebra such that $\fh \subseteq \Rad(U_0(\fh))$, and the restriction $N:= M|_{\fh}$ of $M$ is a
 $U_0(\fh)$-module of constant rank $d$. 

We shall show inductively that $\im x_N \subseteq \Rad^n(N)$ for every $n \ge 1$ and $x \in \fh$, the case $n=1$ following from $\fh \subseteq \Rad(U_0(\fh))$. Let $n\ge 2$. By assumption, there exists 
$x_0 \in [\fh,\fh]\!\smallsetminus\!\{0\}$. Writing $x_0 = \sum_{i=1}^\ell [a_i,b_i]$ with $a_i,b_i \in \fh$, the inductive hypothesis yields
\[ \im (x_0)_N \subseteq \sum_{i=1}^\ell (a_i.b_i.N \!+\! b_i.a_i.N) \subseteq \sum_{i=1}^\ell (a_i.\Rad^{n-1}(N)\!+\!b_i.\Rad^{n-1}(N)) \subseteq \Rad^n(N).\]
Since the map $\msim_N$ is constant, the inclusion between the extreme terms holds for every $x \in \fh$.

As there is $n \in \NN$ with $\Rad^n(N)=(0)$, we see that $\im x_N = (0)$ for all $x \in \fh$. As a result, $d = \rk(N)=0$. \end{proof}

\bigskip

\subsection{Lie algebras with smooth nullcones}
Since $V(\fg)$ is a conical variety, it is smooth if and only if it is a subspace of $\fg$. For such Lie algebras, the results of Section \ref{S:MHP} come to bear. 

\bigskip

\begin{Example} If $\fg$ is nilpotent of nilpotency class $\le p$, then, given $x,y \in V(\fg)$, Jacobson's formula \cite[(II.1)]{SF} yields
\[(x\!+\!y)^{[p]}= \sum_{i=1}^{p-1} s_i(x,y) \in \fg^p = (0),\]
so that $V(\fg)$ is a subspace of $\fg$. \end{Example}

\bigskip
\noindent
We begin by recording a direct consequence of a general result on morphisms between projective spaces.

\bigskip

\begin{Lem} \label{LASN1} Suppose that  $V(\fg)$ is a subspace of $\fg$ and let $M$ be a $U_0(\fg)$-module of constant $j$-rank $d_j$. Then the image of the morphism 
\[ \msim^j_M : \PP(V(\fg))\lra \Gr_{d_j}(M)\]
has dimension $\dim \msim^j_M(\PP(V(\fg))) \in \{0,\dim V(\fg)\!-\!1\}$. \end{Lem}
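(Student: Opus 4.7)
The plan is to reduce this to the classical fact (stated in the text right before Tango's Theorem and cited from Mumford) that the image of any morphism $\varphi : \PP^m \lra \PP^n$ has dimension in $\{0,m\}$. Since $V(\fg)$ is assumed to be a linear subspace of $\fg$ of some dimension $n$, the projective variety $\PP(V(\fg))$ is isomorphic to $\PP^{n-1}$, so the map in question has projective space as its source.

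First I would consider the composite
\[ \psi \ := \ \mspl_M\circ \msim^j_M \ : \ \PP(V(\fg)) \lra \PP(\textstyle\bigwedge^{d_j}(M))\]
with the Pl\"ucker embedding $\mspl_M : \Gr_{d_j}(M) \lra \PP(\bigwedge^{d_j}(M))$. This is a morphism from $\PP^{n-1}$ to a projective space, so the cited result from \cite{Mu} applies directly and gives $\dim \psi(\PP(V(\fg))) \in \{0,n\!-\!1\}$.

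Next I would transfer the dimension statement back to the Grassmannian. Since $\mspl_M$ is a closed immersion, it is in particular injective on points and induces an isomorphism between $\Gr_{d_j}(M)$ and its image; consequently
\[ \dim \msim^j_M(\PP(V(\fg))) \ = \ \dim \psi(\PP(V(\fg))) \ \in \ \{0, n\!-\!1\} \ = \ \{0, \dim V(\fg)\!-\!1\},\]
which is the desired conclusion.

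The step that looks most delicate is the very first one, namely making sure that $\PP(V(\fg))\cong \PP^{n-1}$ so that the Mumford-type statement on morphisms between projective spaces truly applies; but this is immediate from the hypothesis that $V(\fg)$ is a linear subspace of $\fg$, since the underlying set of $\PP(V(\fg))$ is precisely the set of lines in a vector space of dimension $n$. Apart from that, the argument is essentially a one-line invocation of the fact about morphisms out of $\PP^{n-1}$, combined with the observation that composing with a closed immersion preserves the dimension of the image.
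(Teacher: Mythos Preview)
Your proof is correct and follows essentially the same route as the paper: compose with the Pl\"ucker embedding to obtain a morphism from $\PP^{n-1}$ into a projective space, invoke \cite[I.7, Prop.~6]{Mu}, and then use injectivity of $\mspl_M$ to transfer the dimension statement back to the image in $\Gr_{d_j}(M)$. There is nothing to add.
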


\begin{proof} Recall that the Pl\"ucker embedding $\mspl_M : \Gr_{d_j}(M) \lra \PP(\bigwedge^{d_j}(M))$ is an injective morphism of projective varieties. Thanks to \cite[I.7,Prop.6]{Mu}, the conclusion
of the Lemma holds for the morphism $\mspl_M\circ \msim^j_M$. Hence it is also valid for $\msim^j_M$. \end{proof}

\bigskip
\noindent
We denote by $C(\fg) := \{x \in \fg \ ; \  [x,\fg]=(0)\}$ the {\it center} of $\fg$. We say that a $U_0(\fg)$-module $M$ has the {\it equal images property}, provided 
\[ \im x_M^j = \im y_M^j \ \ \ \ \text{for all} \ x,y \in V(\fg)\!\smallsetminus\!\{0\} \ \text{and} \ j \in \{1,\ldots,p\!-\!1\}.\]
We denote by $\EIP(\fg)$ the full subcategory of $\modd U_0(\fg)$, whose objects are the equal images modules. 

In view of Lemma \ref{LASN1}, the morphism $\msim^j_M$ is constant or its generic fiber has dimension $0$. The following result demonstrates that $\msim_M$ has more restrictive properties. 

\bigskip

\begin{Thm} \label{LASN2} Let $M$ be a $U_0(\fg)$-module of constant rank $d$. Then the following statements hold:
\begin{enumerate}
\item Suppose that  $V(\fg)$ is a subspace. Then the morphism $\msim_M : \PP(V(\fg)) \lra \Gr_d(M)$ is injective or constant.
\item If $V(\fg)\cap C(\fg)\ne (0)$ and $\msim_M$ is constant, then $M \in \EIP(\fg)$. \end{enumerate} \end{Thm}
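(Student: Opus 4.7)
The plan for part (1) is to assume $\msim_M$ is non-constant and force injectivity. Suppose towards a contradiction that $[x] \ne [y]$ in $\PP(V(\fg))$ satisfy $\msim_M([x]) = \msim_M([y]) = V$. Since $V(\fg)$ is a subspace, the line $L := kx + ky$ lies in $V(\fg)$, and for every nonzero $z = \alpha x + \beta y \in L$ we have $\im z_M \subseteq \im x_M + \im y_M = V$; the constant rank hypothesis then forces $\im z_M = V$, so $\msim_M$ restricted to the embedded $\PP(L) \cong \PP^1$ is constant. Now the subspace hypothesis lets us identify $\PP(V(\fg))$ with a genuine projective space, so $\mspl_M \circ \msim_M : \PP(V(\fg)) \lra \PP(\bigwedge^d(M))$ is a morphism of projective spaces. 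Composing with the linear embedding $\iota : \PP(L) \hookrightarrow \PP(V(\fg))$, which is manifestly non-constant, yields a constant morphism $\PP^1 \lra \PP(\bigwedge^d(M))$. Corollary \ref{MPS5}(2) then forces $\mspl_M \circ \msim_M$ itself to be constant, and injectivity of $\mspl_M$ gives the desired contradiction.

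For part (2), I would pick $x_0 \in V(\fg) \cap C(\fg) \smallsetminus \{0\}$. Constancy of $\msim_M$ yields $\im y_M = \im (x_0)_M =: V$ for every $y \in V(\fg) \smallsetminus \{0\}$, and I plan to bootstrap this to all higher powers by induction on $j \in \{1,\ldots,p-1\}$. The base case $j = 1$ is built in. For the inductive step, centrality of $x_0$ ensures $[y,x_0] = 0$, so $y_M$ and $(x_0)_M$ commute on $M$. Setting $V_j := \im y_M^j = \im (x_0)_M^j$, the commutativity gives
\[ \im y_M^{j+1} \;=\; y_M(V_j) \;=\; y_M\bigl((x_0)_M^j(M)\bigr) \;=\; (x_0)_M^j\bigl(y_M(M)\bigr) \;=\; (x_0)_M^j(V) \;=\; \im (x_0)_M^{j+1}. \]
Applying this with two different elements $y_1, y_2 \in V(\fg) \smallsetminus \{0\}$ and transitivity gives $\im (y_1)_M^j = \im (y_2)_M^j$ for all such $y_1, y_2$ and all $j$, which is exactly $M \in \EIP(\fg)$.

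The main subtlety in (1) is that $\msim_M$ is not itself a map between projective spaces, so the degree theory of Section \ref{S:MHP} does not apply directly; the trick is to pass through the Pl\"ucker embedding, available because the subspace hypothesis makes $\PP(V(\fg))$ a bona fide projective space. In (2), the linchpin is the centrality of $x_0$: without commutativity of $y_M$ and $(x_0)_M$, the swap $y_M(x_0)_M^j = (x_0)_M^j y_M$ in the inductive step fails and the argument collapses, which is presumably why the hypothesis $V(\fg)\cap C(\fg)\ne(0)$ cannot be dropped.
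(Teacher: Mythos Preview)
Your proof is correct and follows essentially the same route as the paper's. In part (1) you are actually slightly more careful than the paper: you explicitly compose with the Pl\"ucker embedding before invoking Corollary~\ref{MPS5}(2), whereas the paper applies that corollary directly to $\msim_M$ (which strictly speaking lands in a Grassmannian, not a projective space). In part (2) your induction is identical to the paper's up to reindexing ($j \to j+1$ versus $j-1 \to j$); the key commutation $y_M(x_0)_M^j = (x_0)_M^j y_M$ from centrality of $x_0$ is exactly what the paper uses.
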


\begin{proof}  (1) Let $r:= \dim_kV(\fg)$. By assumption, $\PP(V(\fg)) \cong \PP^{r-1}$, so that we have  a morphism
\[ \msim_M : \PP^{r-1} \lra \Gr_d(M).\] 
Suppose that $\msim_M$ is not injective. Then there exist linearly independent elements $x,y \in V(\fg)$ such that $\im x_M = \im y_M$. We consider the subspace $\fu:= kx\oplus ky$ of $V(\fg)$. 
Given $(\alpha,\beta) \in k^2\!\smallsetminus\!\{0\}$, we have
\[ \im (\alpha x\!+\!\beta y)_M \subseteq \im x_M \! +\! \im y_M = \im x_M.\]
Since $M$ has constant rank, we actually have equality. As a result, the morphism $\msim_M$ is constant on $\PP(\fu)=\PP^1 \subseteq \PP^{r-1}$. Owing to Corollary \ref{MPS5}(2), this implies that the 
map $\msim_M$ is constant. 

(2) We show inductively that $\msim^j_M$ is constant for every $j \in \{1,\ldots,p\!-\!1\}$. Let $j>1$ and let $z \in V(\fg)$ be a non-zero central element. Given $x \in V(\fg)\!\smallsetminus\!\{0\}$, the 
inductive hypothesis implies
\[ \im x_M^j = x_M(\im x_M^{j-1}) =   x_M(\im z_M^{j-1}) = z_M^{j-1}(\im x_M) = \im z_M^j.\]
As a result, the map $\msim^j_M$ is constant. 

It follows that $M \in \EIP(\fg)$. \end{proof}

\bigskip

\begin{Remark} Suppose that $V(\fg)$ is a subspace of dimension $\ge2$. If $M$ is a $U_0(\fg)$-module of constant $j$-rank such that $\msim^j_M$ is not constant, then \cite[(4.1.6)]{Sp} shows
that the number of elements of a generic fiber of $\msim^j_M$ is given by the separability degree $[k(V(\fg))\!:\!k(\msim^j_M(V(\fg)))]_s$ of the fields of rational functions. By the above, this field 
extension is purely inseparable if $j=1$. \end{Remark}

\bigskip

\begin{Cor} \label{LASN3} Suppose that $\fg$ is $p$-trivial, and let $M$ be a $U_0(\fg)$-module of constant rank $d$. Then the following statements hold:
\begin{enumerate}
\item The morphism $\msim_M : \PP(\fg) \lra \Gr_d(M)$ is injective or constant.
\item If $\msim_M$ is constant, then $M \in \EIP(\fg)$.
\item If $\msim_M$ is constant and $\fg$ is not abelian, then $M \cong k^{\dim_kM}$. \end{enumerate} \end{Cor}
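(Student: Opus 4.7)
The plan is to deduce each of (1), (2), (3) from the results already established in this section, using the $p$-triviality of $\fg$ to verify the hypotheses.

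For (1), note that $p$-triviality means $V(\fg)=\fg$, so in particular $V(\fg)$ is a subspace of $\fg$. Part (1) of Theorem \ref{LASN2} applies verbatim and yields the dichotomy for $\msim_M: \PP(\fg)\lra \Gr_d(M)$.

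For (2), I would use Theorem \ref{LASN2}(2), whose hypothesis is that $V(\fg)\cap C(\fg)\ne (0)$. Since $V(\fg)=\fg$, this reduces to $C(\fg)\ne (0)$. But $\fg$ is $p$-trivial and finite dimensional, so Engel's theorem implies that $\fg$ is nilpotent; the centre of a nonzero nilpotent Lie algebra is nonzero (and the case $\fg=(0)$ is trivial). Thus the hypothesis of Theorem \ref{LASN2}(2) is satisfied, and we obtain $M\in \EIP(\fg)$.

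For (3), assume that $\msim_M$ is constant and that $\fg$ is not abelian. The key point is that $\fg$ itself is then a non-abelian $p$-trivial subalgebra of $\fg$, so Lemma \ref{RLA2} applies and forces $d=0$. Once $d=0$, we have $\im x_M=(0)$ for every $x\in \fg$, so $\fg$ acts trivially on $M$. Since $U_0(\fg)$ is generated as an algebra by $\fg$, the $U_0(\fg)$-action on $M$ factors through the counit $U_0(\fg)\lra k$, hence $M\cong k^{\dim_k M}$ as a trivial module. No step looks like a genuine obstacle here, since all of (1)--(3) are essentially bookkeeping on top of Theorem \ref{LASN2} and Lemma \ref{RLA2}; the mild care needed is in verifying the centrality/nilpotency hypothesis in (2) and in recognising that $p$-triviality of $\fg$ provides the required non-abelian $p$-trivial subalgebra for (3).
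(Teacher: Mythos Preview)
Your proposal is correct and follows essentially the same route as the paper: parts (1) and (2) are deduced from Theorem \ref{LASN2} using that $V(\fg)=\fg$ and that a nonzero $p$-trivial (hence nilpotent) Lie algebra has nonzero centre, and part (3) is obtained from Lemma \ref{RLA2} applied to $\fg$ itself, after which $d=0$ forces $M$ to be trivial.
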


\begin{proof} (1) This is a direct consequence of Theorem \ref{LASN2}(1). 

(2) Suppose that $\fg \ne (0)$. Since the $p$-trivial Lie algebra $\fg$ is nilpotent, it follows that $C(\fg)\ne (0)$. Theorem \ref{LASN2}(2) now yields the result.

(3) Lemma \ref{RLA2} implies that $d=0$, whence $x_M=0$ for all $x \in \fg$. Consequently, $M$ is a trivial $U_0(\fg)$-module. \end{proof} 

\bigskip
\noindent
Following \cite{CFP}, we refer to an abelian Lie algebra with trivial $p$-map as being {\it elementary}. These algebras appeared in Hochschild's work \cite{Ho} on restricted cohomology, who called them 
strongly abelian. We denote by $\fe_r$ the, up to isomorphism, unique elementary Lie algebra of dimension $r$ and note that the restricted enveloping algebra $U_0(\fe_r)$ is isomorphic (as an 
associative algebra) to the group algebra $kE_r$ of the $p$-elementary abelian group $E_r$ of rank $r$.  As observed in \cite{CFP}, the set $\EE(2,\fg) := \{\fe \subseteq \fg \ ; \ \fe \ \text{is an 
elementary $p$-subalgebra of dimension} \ 2 \}$ is a closed subset of the Grassmannian $\Gr_2(\fg)$.  

\bigskip

\begin{Examples} Theorem \ref{LASN2}(1) may fail for modules of constant $j$-rank. Suppose that $p\ge 3$.
\begin{enumerate}
\item We consider the Lie algebra $\fe_2=kx\!\oplus\!ky$ as well as $M:= U_0(\fe_2)/\Rad^3(U_0(\fe_2))$. Using the standard basis $\{\bar{x}^i\bar{y}^j \ ; \ 0\le i\!+\!j \le 2\}$, we see that $M$ has 
constant $2$-rank $\rk^2(M)=1$. In this case, the morphism 
\[ \msim_M^2 : \PP^1\lra \PP^5 \ \ ; \ \ (\alpha\!:\!\beta) \mapsto (0\!:\!0\!:\!0\!:\!\alpha^2\!:\!2\alpha\beta\!:\!\beta^2)\]
is injective.
\item Let $N:= M/k\bar{x}\bar{y}$. Then $N$ has constant $2$-rank $\rk^2(N)=1$, and the map
\[ \msim_N^2 : \PP^1\lra \PP^5 \ \ ; \ \ (\alpha\!:\!\beta) \mapsto (0\!:\!0\!:\!0\!:\!\alpha^2\!:\!0\!:\!\beta^2)\]
is not injective. \end{enumerate} \end{Examples}

\bigskip
\noindent
The injectivity of the map $\msim^j_M$ in the first example above is a consequence of the following result, which holds for stable modules. Recall that the general linear group $\GL_r(k)$ acts $\fe_r$
via automorphism of restricted Lie algebras such that $\fe_r\!\smallsetminus\!\{0\}$ is an orbit. This implies that $\GL_r(k)$ acts transitively on $\PP(\fe_r)$. Moreover, $\GL_r(k)$ operates on 
$U_0(\fe_r)$ via automorphisms of Hopf algebras. If $M$ is a $U_0(\fe_r)$-module and $g \in \GL_r(\fe_r)$, then $M^{(g)}$ is the $U(\fe_r)$-module with underlying $k$-space and $U_0(\fe_r)$-structure
\[ u\dact m := (g^{-1}.u).m \ \ \ \ \ \forall \ u \in U_0(\fe_r), \ m \in M.\]
We say that $M$ is {\it $\GL_r(k)$-stable} if $M^{(g)}\cong M$ for all $g \in \GL_r(k)$. Since $\GL_r(k)$ acts transitively on $\fe_r\!\smallsetminus\!\{0\}$, every $\GL_r(k)$-stable module has constant
$j$-rank for every $j \in \{1,\ldots,p\!-\!1\}$. 

\bigskip
\noindent

\begin{Prop} \label{LASN4} Let $M$ be a $\GL_r(k)$-stable $U_0(\fe_r)$-module. If $j \in \{1,\ldots,p\!-\!1\}$ is such that $\msim_M^j$ is not constant, then $\msim^j_M$ is injective. \end{Prop}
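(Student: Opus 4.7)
The plan is to use the $\GL_r(k)$-stability of $M$ to derive an equivariance property of $\msim^j_M$, and then play this off against the $2$-transitivity of the $\GL_r(k)$-action on $\PP(\fe_r) \cong \PP^{r-1}$. The case $r = 1$ is vacuous, since then $\PP(\fe_r)$ consists of a single point; so I would assume $r \geq 2$ throughout.

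For the equivariance, fix for each $g \in \GL_r(k)$ an isomorphism $\psi_g : M^{(g)} \to M$ of $U_0(\fe_r)$-modules, whose existence is guaranteed by $\GL_r(k)$-stability. Unwinding the definition of $M^{(g)}$, this amounts to the identity $x.\psi_g(m) = \psi_g((g^{-1}.x).m)$ for all $x \in \fe_r$ and $m \in M$, which rearranges to the conjugation relation $(g.x)_M = \psi_g \circ x_M \circ \psi_g^{-1}$ in $\End_k(M)$. Since $\fe_r$ is abelian, raising this identity to the $j$-th power and taking images yields the equivariance
\[ \msim^j_M(g.[x]) = \psi_g\bigl(\msim^j_M([x])\bigr) \qquad \forall\ g \in \GL_r(k),\ [x] \in \PP(\fe_r),\]
where $\GL_r(k)$ acts on $\Gr_{d_j}(M)$ via the linear automorphisms $\psi_g$.

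Now suppose, for a contradiction, that $\msim^j_M$ is not injective, so that $\msim^j_M([x]) = \msim^j_M([y])$ for some $[x] \neq [y]$ in $\PP(\fe_r)$. Given any other pair of distinct points $[x'] \neq [y']$, the $2$-transitivity of $\GL_r(k)$ on $\PP^{r-1}$ — any pair of linearly independent lines can be mapped to any other such pair, since $\GL_r(k)$ is transitive on ordered pairs of linearly independent vectors up to scalars — furnishes $g \in \GL_r(k)$ with $g.[x] = [x']$ and $g.[y] = [y']$. The equivariance then gives
\[ \msim^j_M([x']) = \psi_g\bigl(\msim^j_M([x])\bigr) = \psi_g\bigl(\msim^j_M([y])\bigr) = \msim^j_M([y']).\]
Thus $\msim^j_M$ collapses every pair of distinct points of $\PP(\fe_r)$, whence it is constant, contrary to hypothesis.

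The only delicate point is the bookkeeping in the equivariance step (tracking $g$ versus $g^{-1}$, and verifying that the conjugation relation for $x_M$ passes to $x_M^j$ and thence to its image); once the equivariance is in place, the dichotomy is immediate from the $2$-transitive action on $\PP^{r-1}$.
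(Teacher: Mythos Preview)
Your proof is correct and considerably more direct than the paper's. Both arguments begin by establishing the same equivariance $\msim^j_M(g.[x])=\psi_g(\msim^j_M([x]))$, but from there the two diverge. The paper uses only \emph{transitivity} of $\GL_r(k)$ on $\PP(\fe_r)$ to conclude that all fibres have the same cardinality, then invokes Lemma~\ref{LASN1} (a consequence of the dimension dichotomy for morphisms $\PP^n\to\PP^m$) to see that a non-constant $\msim^j_M$ has finite fibres; it next shows that the connected stabilizer $G_{[x]}$ acts on the finite fibre through $[x]$ and hence fixes each of its points, forcing $G_{[x]}=G_{[y]}$ for any $[y]$ in that fibre, and finally exhibits an explicit element of $G_{[x]}\smallsetminus G_{[y]}$ when $[x]\neq[y]$. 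Your route via $2$-transitivity short-circuits all of this: once one collision occurs, every pair collides, and the map is constant. This avoids the appeal to Lemma~\ref{LASN1}, the connectedness of the stabilizer, and the explicit matrix computation. The paper's approach, on the other hand, would still work under weaker hypotheses (e.g.\ a merely transitive action with connected point-stabilizers distinguishing points), which may be why the author chose it.
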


\begin{proof} By assumption, there exists for every $g \in \GL_r(k)$ an isomorphism $\kappa_g : M^{(g)} \lra M$ of $U_0(\fe_r)$-modules. This implies in particular the identities
\[ (\ast) \ \ \ \ \ \ \ \ \ \   \im (g.x^j)_M = \kappa_g(\im (g.x^j)_{M^{(g)}}) = \kappa_g(\im x^j_M) \ \ \ \ \ \ \forall \ x \in \fe_r, g \in \GL_r(k).\] 
Suppose that $\im y_M^j = \im x^j_M$. Then we have $\im (g.y^j)_M = \kappa_g(\im y^j_M) = \kappa_g(\im x^j_M) = \im (g.x^j)_M$, so that $g.(\msim^j_M)^{-1}(\msim_M^j([x]))
\subseteq (\msim^j_M)^{-1}(\msim_M^j(g.[x]))$ for every $g \in \GL_r(k)$ and $[x]\in \PP(\fe_r)$. Hence $(\msim^j_M)^{-1}(\msim_M^j(g.[x]))$ = $g.(g^{-1}.(\msim^j_M)^{-1}(\msim_M^j(g.[x])))
\subseteq g.(\msim^j_M)^{-1}(\msim_M^j([x]))$, and we have equality. As $\GL_r(k)$ acts transitively on $\PP(\fe_r)$, all fibers of $\msim_M^j$ have the 
same number of elements. Since $\im_M^j$ is not constant, Lemma \ref{LASN1} ensures that all fibers are finite. 

Let $[x]\in \PP(\fe_r)$ and denote by $G_{[x]}$ the stabilizer of $[x]$ in $\GL_r(k)$. Let $[y]$ be another element of the fiber $(\msim_M^j)^{-1}(\msim_M^j([x]))$. For $g \in G_{[x]}$, we have, 
observing $(\ast)$,
\[ \msim_M^j(g.[y]) = \im (g.y^j)_M = \kappa_g(\im y^j_M) = \kappa_g(\im x^j_M) = \im (g.x^j)_M = \im x^j_M = \msim^j_M([x]).\]
Accordingly, the orbit $G_{[x]}.[y]$ is contained in the finite fiber $(\msim_M^j)^{-1}(\msim_M^j([x]))$. 

As $\GL_r(k)$ acts transitively on $\fe_r\!\smallsetminus\!\{0\}$, the stabilizer $G_{[x]}$ is isomorphic to the group of those matrices, whose first column is of the form $\alpha (1,0,\ldots,0)^{\rm tr}$ for some 
$\alpha \in k^\times$. Consequently, $G_{[x]} \cong k^\times \!\times\!(k^{r-1}\!\rtimes\!\GL_{r-1}(k))$ is connected. As $G_{[x]}.[y]$ is finite, we obtain $G_{[x]}.[y] = \{[y]\}$, so that $G_{[x]} \subseteq 
G_{[y]}$. By symmetry, we thus have $G_{[x]}=G_{[y]}$.

Suppose that $[x] \ne [y]$. Then $\{x,y\}$ and $\{x,x\!+\!y\}$ are parts of two bases of $\fe_r$, and we can find $g \in \GL_r(k)=\GL(\fe_r)$ such that $g.x = x$ and $g.y = x\!+\!y$. Since
$[y]\ne [x\!+\!y]$ we have $g \in G_{[x]}\!\smallsetminus\!G_{[y]}$, a contradiction.  As a result, the map $\msim_M^j$ is injective. \end{proof}

\bigskip

\begin{Remark} If $P$ is a projective $U_0(\fe_r)$-module. Then $P$ is $\GL_r(k)$-stable and Proposition \ref{LASN4} implies that $\msim^j_M$ is injective. Since $M_n:=U_0(\fe_r)/\Rad^n(U_0(\fe_r))$
is $\GL_r(k)$-stable, it follows that $\msim^j_{M_n}$ is injective whenever $j<n$. \end{Remark}

\bigskip

\section{The degree of a module}\label{S:DM}
In this section, we introduce new invariants for modules of restricted Lie algebras, called $j$-degrees. We study the behavior of degrees on short exact sequences, and prove a formula
that relates the $j$-degrees of a module and its dual module to its $j$-rank. This yields information concerning Jordan types of self-dual modules along with the computation of the degrees of
a number of modules over elementary Lie algebras.

\bigskip

\subsection{General properties}
Throughout, $(\fg,[p])$ is assumed to be a restricted Lie algebra. In the sequel, we shall be concerned with modules for restricted Lie algebras $\fg$, whose nullcones $V(\fg)$ are subspaces. As noted 
earlier, every $U_0(\fg)$-module $M$ gives rise to morphisms
\[ \mspl_M \circ \msim^j_M : U_{M,j} \lra \PP(\bigwedge^{\rk^j(M)}(M)),\]
where $U_{M,j} \subseteq \PP(V(\fg))\cong \PP^{r-1}$ is a non-empty open subset. The results of Section \ref{S:MHP} motivate the following:

\bigskip

\begin{Definition} Suppose that $V(\fg)$ is a subspace of $\fg$, $j \in \{1,\ldots,p\!-\!1\}$. Let $M$ be a $U_0(\fg)$-module of generic $j$-rank $\rk^j(M)>0$. Then 
\[ \deg^j(M):=\deg(\mspl_M\circ \msim^j_M)\]
is called the {\it $j$-degree} of $M$. We write $\deg(M):= \deg^1(M)$ and refer to $\deg(M)$ as the {\it degree} of $M$. 

If $\rk^j(M)=0$, then we define $\deg^j(M):=0$. \end{Definition}  

\bigskip
\noindent
Recall that a $U_0(\fg)$-module $M$ is said be of {\it constant Jordan type}, provided $M$ has constant $j$-rank for all $j \in \{1,\ldots,p\!-\!1\}$, cf.\ \cite{CFP1}.

\bigskip

\begin{Remarks} (1) \ Directly from the definition we see that a $U_0(\fg)$-module $M$ of constant Jordan type has the equal images property if and only if $\deg^j(M)=0$ for every $j \in \{1,\ldots,p\!-\!1\}
$. If, in addition, $V(\fg)\cap C(\fg)\ne (0)$, then $\deg(M)=0$ already implies $M \in \EIP(\fg)$.  

(2) \ Let $\fe_2 := kx \oplus ky$ be the two-dimensional elementary Lie algebra. Suppose that $M$ is a $U_0(\fe_2)$-module such that $y_M = 0$. Since $\im (\alpha x \!+\! \beta y)^j_M = \im \alpha^j 
x^j_M$ for all $\alpha, \beta \in k$, the maps $\msim^j_M$ are constant, so that $\deg^j(M)=0$ for all $j \in \{1,\ldots,p\!-\!1\}$. \end{Remarks}

\bigskip

\begin{Prop} \label{DGP1} Suppose that $V(\fg) \subseteq \fg$ is a subspace. Let $M$ be a $U_0(\fg)$-module. Then we have 
\[ \deg^j(M) \in \{0,\ldots,j\rk^j(M)\}\] 
for every $j \in \{1,\ldots,p\!-\!1\}$. \end{Prop}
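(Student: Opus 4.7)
The case $\rk^j(M) = 0$ is immediate from the convention $\deg^j(M) := 0$. From now on assume $d_j := \rk^j(M) > 0$, and write $r := \dim_k V(\fg)$, so that $\PP(V(\fg)) \cong \PP^{r-1}$.

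The strategy is to exhibit one explicit (not necessarily reduced) defining system for $\mspl_M \circ \msim^j_M$ whose polynomials are all homogeneous of degree $j d_j$, and then invoke Lemmas~\ref{MPS1} and \ref{MPS2}(2) to pass to a reduced defining system of degree at most $j d_j$.

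First, fix a basis of $V(\fg)$ and a basis of $M$. Following the proof of Theorem~\ref{Mo1} in the restricted-Lie-algebra setting of Corollary~\ref{RLA1}, the map
\[
\omega^j : V(\fg) \lra \End_k(M),\qquad x \mapsto \varrho(x)^j,
\]
is a homogeneous polynomial morphism of degree $j$: each matrix entry of $\varrho(x)^j$ in the chosen bases is a homogeneous polynomial of degree $j$ in the coordinates of $x$. Next, as recorded in the discussion preceding the Example of Section~2.1, the Plücker--evaluation morphism
\[
\overline{\ev} : \PP(\End_k(M))_{d_j} \lra \PP\bigl(\bigwedge^{d_j}(M)\bigr)
\]
is locally given by the $d_j \times d_j$ minors $\bigl(\det((X_{ij})_{(I,J)})\bigr)_{I \in \cS(d_j)}$, each of which is homogeneous of degree $d_j$ in the entries $X_{ij}$.

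Since $\mspl_M \circ \msim^j_M = \overline{\ev} \circ \overline{\omega^j}$, substituting the degree-$j$ polynomials coming from $\omega^j$ into the degree-$d_j$ minors yields, locally on $U_{M,j}$, a defining system $(g_0,\dots,g_N)$ for $\mspl_M \circ \msim^j_M$ whose entries are all homogeneous polynomials of degree $j d_j = j\rk^j(M)$. Because $d_j > 0$ and $U_{M,j}$ is nonempty, at least one of the $g_i$ is nonzero.

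Finally, Lemma~\ref{MPS3} guarantees that $\mspl_M \circ \msim^j_M$ is homogeneous, so by Lemma~\ref{MPS2}(2) we may write $g_i = h f_i$ with $(f_0,\dots,f_N)$ a reduced defining system and $h$ a homogeneous polynomial. By Lemma~\ref{MPS1}, for each $i$ with $g_i \ne 0$,
\[
\deg f_i \;\le\; \deg g_i \;=\; j\rk^j(M),
\]
and since some $f_i \ne 0$, $\deg^j(M) = \deg f_i \in \{0,1,\dots,j\rk^j(M)\}$, as required.

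There is no serious obstacle here; the statement is essentially the bookkeeping assertion that composing a degree-$j$ homogeneous map with the degree-$d_j$ Plücker morphism produces polynomials of degree $j d_j$, so that after removing any common factor the reduced defining system can only have smaller degree.
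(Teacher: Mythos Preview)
Your proof is correct and follows essentially the same approach as the paper: both arguments factor $\mspl_M\circ\msim^j_M$ as $\overline{\ev}\circ\overline{\omega^j}$, observe that the minors giving $\overline{\ev}$ have degree $d_j$ while $\omega^j$ has degree $j$, obtain a (local) defining system of degree $jd_j$, and then invoke Lemma~\ref{MPS3} together with Lemma~\ref{MPS2} to bound the degree of the reduced system. The paper is slightly more explicit about the open cover $\{O_J\}_{J\in\cS(d_j)}$ on which the minors furnish the local systems, but your phrase ``locally on $U_{M,j}$'' captures this, and passing from any one nonempty $O_J$ to the global reduced system via Lemma~\ref{MPS2} is exactly what the paper does.
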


\begin{proof} (1) Let $j \in \{1,\ldots,p\!-\!1\}$ and put $d:= \rk^j(M)$. Without loss of generality, we may assume that $d>0$. We fix a basis $\{v_1,\ldots, v_n\}$ of $M$ and adopt the conventions of Section 2.1. For $J \in \cS(d)$, we consider the open subset
\[ \cU_J := \{ [f] \in \PP(\End_k(M))_d \ ; \ \bigwedge^d(f)(v_J) \ne 0\}\]
of $\PP(\End_k(M))_d$. Thanks to Proposition \ref{Pr2}(1), the map
\[ \overline{\ev} : \PP(\End_k(M))_d \lra \PP(\bigwedge^d(M)) \ \ ; \ \ [f] \mapsto [\bigwedge^d(f(M))]\]
is a morphism such that
\[  \overline{\ev}([f]) = [\bigwedge^d(f)(v_J)] \ \ \ \ \ \ \ \ \text{for all} \ \ [f] \in \cU_J.\]
Let $f \in \End_k(M)_d$ and denote by $A(f)=(a_{ij}) \in {\rm Mat}_n(k)$ the ($n\!\times\!n$)-matrix representing $f$ with respect to the basis $\{v_1,\ldots,v_n\}$.  As noted earlier, we have
\[ \bigwedge^d(f)(v_K) = \sum_{K'\in\cS(d)}\det(A(f)_{(K',K)})v_{K'} \ \ \ \ \ \ \ \ \forall \ K \in \cS(d).\]
Thus, the Pl\"ucker coordinates of $\overline{\ev}([f])$ for $[f] \in \cU_J$ are $[\det(A(f)_{(I,J)})]_{I\in\cS(d)}$. As a result, the map $\overline{\ev}|_{\cU_J}$ is defined by homogeneous polynomials 
of degree $d$.

The canonical map $\varrho_{M,j} : U_{M,j} \lra \PP(\End_k(M))_d \ \ ; \ \ [x] \mapsto [x^j_M]$ is given by homogeneous polynomials of degree $j$. Setting $O_J := \varrho^{-1}_{M,j}(\cU_J)$, we obtain 
an open cover $U_{M,j} = \bigcup_{J \in \cS(d_j)}O_J$ of $U_{M,j}$ such that $\mspl_M\circ\msim^j_M|_{O_J} = \overline{\ev}\circ \varrho_{M,j}|_{O_J}$ is defined by homogeneous polynomials 
$g_{0,J},\ldots, g_{m,J} \in k[X_0,\ldots, X_{r-1}]$ of degree $jd$, whenever $O_J \neq \emptyset$.  

According to Lemma \ref{MPS3}, the morphism $\mspl_M\circ \msim^j_M: U_{M,j} \lra \PP(\bigwedge^d(M))$ is homogeneous, and Lemma \ref{MPS2} yields $\deg^j(M) \le jd$. \end{proof}

\bigskip

\begin{Remark} Let $M$ be a module for an arbitrary restricted Lie algebra $\fg$. The above arguments also show that the morphism
\[ \msim^j_M : U_{M,j} \lra \Gr_{\rk^j(M)}(M)\]
is locally defined by homogeneous polynomials of degree $j\rk^j(M)$. \end{Remark}

\bigskip

\begin{Thm} \label{DGP2} Suppose that $V(\fg) \subseteq \fg$ is a subspace, and let $\fh \subseteq \fg$ be a $p$-subalgebra such that $\dim V(\fh) \ge 2$. If $M$ is a $U_0(\fg)$-module of constant 
$j$-rank, then we have $\deg^j(M)=\deg^j(M|_{\fh})$. \end{Thm}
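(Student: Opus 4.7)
The strategy is to compute both sides through the same auxiliary morphism from $\PP^1$ using Corollary \ref{MPS6}, and then observe that this universal computation yields the same numerator and the same denominator.

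First I would verify that $M|_{\fh}$ also has constant $j$-rank equal to $d := \rk^j(M)$. Since $\fh$ is a $p$-subalgebra of $\fg$, we have $V(\fh) = V(\fg) \cap \fh$, which is a subspace of $\fh$ (being the intersection of two subspaces of $\fg$). For any non-zero $x \in V(\fh) \subseteq V(\fg)$, the operator $x^j_{M|_\fh} = x^j_M$ has rank $d$ by the constant $j$-rank hypothesis on $M$. Hence the morphism $\msim^j_{M|_\fh} : \PP(V(\fh)) \lra \Gr_d(M)$ is defined on all of $\PP(V(\fh))$, and, tautologically, it factors as $\msim^j_{M|_\fh} = \msim^j_M \circ \iota$, where $\iota : \PP(V(\fh)) \hookrightarrow \PP(V(\fg))$ is the inclusion. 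After choosing a basis of $V(\fg)$ extending a basis of $V(\fh)$, this inclusion is defined by linear coordinate polynomials with $\gcd = 1$, so $\deg(\iota) = 1$.

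The case $d=0$ is trivial by the convention $\deg^j(-) = 0$, so assume $d > 0$. Since $\dim V(\fh) \ge 2$, the projective space $\PP(V(\fh))$ has dimension $\ge 1$, so I can choose a non-constant morphism $\omega : \PP^1 \lra \PP(V(\fh))$; then $\iota \circ \omega : \PP^1 \lra \PP(V(\fg))$ is also non-constant, and Corollary \ref{MPS5}(1) gives
\[ \deg(\iota \circ \omega) = \deg(\iota) \deg(\omega) = \deg(\omega). \]
The local description in the proof of Proposition \ref{DGP1} together with Lemma \ref{MPS3} shows that both $\mspl_M \circ \msim^j_M$ and $\mspl_M \circ \msim^j_{M|_\fh}$ are homogeneous morphisms into $\PP(\bigwedge^d(M))$, so Corollary \ref{MPS6} applies to each and yields
\begin{align*}
\deg^j(M) &= \frac{\deg(\mspl_M \circ \msim^j_M \circ \iota \circ \omega)}{\deg(\iota \circ \omega)}, \\
\deg^j(M|_\fh) &= \frac{\deg(\mspl_M \circ \msim^j_{M|_\fh} \circ \omega)}{\deg(\omega)}.
\end{align*}
Since $\msim^j_M \circ \iota = \msim^j_{M|_\fh}$, the two numerators are the degrees of one and the same morphism $\PP^1 \to \PP(\bigwedge^d(M))$, and the two denominators agree by the calculation above. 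Consequently $\deg^j(M) = \deg^j(M|_\fh)$.

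The main obstacle is purely a matter of bookkeeping: one must ensure that both $\mspl_M \circ \msim^j_M$ and $\mspl_M \circ \msim^j_{M|_\fh}$ genuinely qualify as homogeneous morphisms in the sense of Section \ref{S:MHP}, so that Corollary \ref{MPS6} is applicable on both sides, and that the two universal quotients are taken with respect to compatible choices of $\omega$. Beyond this, no new ingredient is needed, which is precisely why the reduction of the $j$-degree to a two-dimensional sub-nullcone is available.
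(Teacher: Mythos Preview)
Your proof is correct and rests on the same idea as the paper's: factor $\mspl_M\circ\msim^j_{M|_\fh}$ as $(\mspl_M\circ\msim^j_M)\circ\iota$ with $\deg(\iota)=1$, then use multiplicativity of degree. The paper is simply more direct: since $M$ has constant $j$-rank, $\mspl_M\circ\msim^j_M$ is a morphism $\PP(V(\fg))\to\PP(\bigwedge^d(M))$ between full projective spaces, so Corollary~\ref{MPS5}(1) applies immediately to the composite, giving $\deg^j(M|_\fh)=\deg(\mspl_M\circ\msim^j_M)\cdot\deg(\iota)=\deg^j(M)$. Your passage through an auxiliary $\omega:\PP^1\to\PP(V(\fh))$ and Corollary~\ref{MPS6} is not wrong, but it is unnecessary here---Corollary~\ref{MPS6} is designed for the case where the source $X$ is merely quasi-projective, whereas in the present situation both sources are already projective spaces.
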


\begin{proof} The canonical inclusion $\fh \subseteq \fg$ determines a morphism $\iota : \PP(V(\fh)) \hookrightarrow \PP(V(\fg))$ of degree $1$ such that $\mspl_{M|_\fh} \circ \msim^j_{M|_{\fh}} = 
\mspl_M \circ \msim^j_M\circ \iota$. Since $V(\fh) = V(\fg)\cap \fh$ is a subspace of dimension $\ge 2$, Corollary \ref{MPS5} gives rise to
\begin{eqnarray*}
\deg^j(M|_{\fh}) & = & \deg(\mspl_{M|_\fh} \circ \msim^j_{M|_{\fh}}) = \deg(\mspl_M \circ \msim^j_M\circ \iota) = \deg(\mspl_M \circ \msim^j_M)\deg(\iota)\\
& = & \deg(\mspl_M \circ \msim^j_M) = \deg^j(M),\end{eqnarray*}
as desired. \end{proof}

\bigskip

\begin{Cor} \label{DGP3} Suppose that $V(\fg)\subseteq \fg$ is a subspace and that $\EE(2,\fg)\ne \emptyset$. Then a $U_0(\fg)$-module $M$ of constant rank has the equal images property if 
and only if $\deg(M)=0$. \end{Cor}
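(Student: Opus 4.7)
The forward direction is immediate from the definitions: if $M \in \EIP(\fg)$, the case $j=1$ says $\im x_M = \im y_M$ for all nonzero $x,y \in V(\fg)$, so $\msim_M$ is constant; composing with the Pl\"ucker embedding $\mspl_M$ preserves this, whence $\deg(M) = 0$.

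For the converse, my plan is to reduce the problem to a rank-two elementary subalgebra and invoke Theorem \ref{LASN2}(2). Suppose $\deg(M) = 0$. Then $\mspl_M \circ \msim_M$ is defined by polynomials of degree $0$, hence constant; since $\mspl_M$ is injective, $\msim_M$ itself is constant, and in particular $\im x_M$ takes a single value $I$ on $V(\fg)\smallsetminus\{0\}$. By hypothesis we may pick $\fe \in \EE(2,\fg)$. The restriction $M|_\fe$ retains constant rank (since $\fe\smallsetminus\{0\}\subseteq V(\fg)\smallsetminus\{0\}$), and Theorem \ref{DGP2} gives $\deg(M|_\fe) = \deg(M) = 0$; the same Pl\"ucker-injectivity argument yields $\msim_{M|_\fe}$ constant. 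Because $\fe$ is abelian with trivial $p$-map, $V(\fe) = \fe = C(\fe)$, so $V(\fe)\cap C(\fe) \neq (0)$, and Theorem \ref{LASN2}(2) applied to $\fe$ produces $M|_\fe \in \EIP(\fe)$.

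The main obstacle I anticipate is the last step: propagating $M|_\fe \in \EIP(\fe)$ to $M \in \EIP(\fg)$. For $j=1$ there is nothing to show, as $\msim_M$ is globally constant on $\PP(V(\fg))$ (not merely on $\PP(\fe)$). For $j \ge 2$ one must pass from the known equality $\im x^j_M = \im z^j_M$, valid on $\fe\smallsetminus\{0\}$ for a fixed $z \in \fe\smallsetminus\{0\}$, to the analogous equality for every $y \in V(\fg)\smallsetminus\{0\}$. The clean commutation argument from the proof of Theorem \ref{LASN2}(2) (which uses $y_M\circ z^{j-1}_M = z^{j-1}_M \circ y_M$) does not apply verbatim here, since arbitrary $y \in V(\fg)$ need not commute with $z$. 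The plan is to combine the global identity $\im y_M = I = \im z_M$ with the inductive formula $\im y^j_M = y_M(\im y^{j-1}_M)$, exploiting the linearity of $y \mapsto y_M|_I$ on the subspace $V(\fg)$ together with the constancy of $\msim_M$ to force $y_M(\im z^{j-1}_M) = z^{j-1}_M(I)$ for all $y \in V(\fg)\smallsetminus\{0\}$, thereby closing the induction.
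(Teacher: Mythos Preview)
Your argument through $M|_\fe \in \EIP(\fe)$ is correct and matches the paper. The gap lies exactly where you flag it: the inductive ``plan'' in your last paragraph does not close. Granting the inductive hypothesis $\im y^{j-1}_M = W$ for all nonzero $y \in V(\fg)$, you get $\im y^j_M = y_M(W)$, and you must show $y_M(W)$ is independent of $y$. Linearity of $y \mapsto y_M|_W$ and the equality $\im y_M = I$ do not force this: two linear maps $W \to I$ with the same source and target can have different images, and a linear family $\{y_M|_W : y \in V(\fg)\}$ of such maps will generically have varying image. The identity $y_M(\im z^{j-1}_M) = z^{j-1}_M(I)$ you propose to establish is equivalent to the conclusion itself and is not a consequence of the constancy of $\msim_M$.

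The paper sidesteps this obstacle by staying with degrees rather than attempting any direct image computation. From $M|_\fe \in \EIP(\fe)$ one has $\deg^j(M|_\fe) = 0$ for every $j \in \{1,\ldots,p\!-\!1\}$. Theorem~\ref{DGP2}, applied separately for each $j$, then transfers this back to $\fg$: $\deg^j(M) = \deg^j(M|_\fe) = 0$, so each $\msim^j_M$ is constant and $M \in \EIP(\fg)$. The passage from $\fe$ to $\fg$ is thus handled uniformly by the degree formalism (ultimately Corollary~\ref{MPS5}), and no comparison of $y_M(W)$ with $z_M(W)$ is ever required.
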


\begin{proof} Let $\fe \in \EE(2,\fg)$ and suppose that $\deg(M)=0$. Then $\msim_{M|_\fe}$ is constant and Corollary \ref{LASN3} ensures that $M|_\fe$ has the equal images property. As a result,
$\deg^j(M|_\fe)=0$ for every $j \in \{1,\ldots,p\!-\!1\}$. Thanks to Theorem \ref{DGP2}, this also holds for the $j$-degrees of $M$, so that $\msim_M^j$ is constant for $1\!\le\!j\!\le\!p\!-\!1$. Consequently,
$M$ has the equal images property. \end{proof} 

\bigskip

\begin{Remark} In the situation above, the condition $\EE(2,\fg)\ne\emptyset$ does not follows from $\dim V(\fg)\ge 2$. Suppose that $p\ge 3$. Then the Heisenberg Lie algebra $\fg:=kx\!\oplus\!ky\!\oplus\!kz$ with $p$-map $x^{[p]}=0=y^{[p]}$ and $z^{[p]}=z$ has a linear nullcone of dimension $2$, while $\EE(2,\fg)=\emptyset$. \end{Remark}

\bigskip

\begin{Lem} \label{DGP4} Suppose that $(0) \lra N \stackrel{\iota}{\lra} E \stackrel{\pi}{\lra} M \lra (0)$ is a short exact sequence of $U_0(\fg)$-modules, where $(\fg,[p])$ is a restricted Lie algebra such 
that $V(\fg) \subseteq \fg$ is a subspace. For $j \in \{1,\ldots,p\!-\!1\}$, the following statements hold:
\begin{enumerate}
\item If  $\rk^j(E)=\rk^j(M)\!+\!\rk^j(N)$, then $\deg^j(E) \ge \deg^j(M)\!+\!\deg^j(N)$. 
\item If the sequence splits, then $\deg^j(E) = \deg^j(M)\!+\!\deg^j(N)$.
\item If $E$ has constant $j$-rank and $N \subseteq \bigcap_{x \in V(\fg)\!\smallsetminus\!\{0\}}\im x^j_E$, then $M$ has constant $j$-rank satisfying $\rk^j(E)=\rk^j(M)\!+\!\dim_kN$, while $\deg^j(E) = 
\deg^j(M)$.
\item If $E$ has constant $j$-rank and $\sum_{x \in V(\fg)\!\smallsetminus\!\{0\}}\ker x^j_E \subseteq N$, then $N$ has constant $j$-rank satisfying $\rk^j(E)=\rk^j(N)\!+\!\dim_kM$, while 
$\deg^j(E)= \deg^j(N)\!+\!j\dim_kM$.\end{enumerate}\end{Lem}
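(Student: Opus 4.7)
All four parts rest on the snake lemma applied to $0\to N\to E\to M\to 0$ with endomorphism $x^j$: for each $x\in V(\fg)\smallsetminus\{0\}$ we obtain $0\to \ker x^j_N\to \ker x^j_E\to \ker x^j_M\to \coker x^j_N\to\cdots$ together with the surjection $\im x^j_E\twoheadrightarrow \im x^j_M$ whose kernel is $\im x^j_E\cap N$. The rank statements follow by dimension counts: in (1), additivity of generic ranks forces $\im x^j_E\cap N=\im x^j_N$ on the open set $U:=U_{E,j}\cap U_{N,j}\cap U_{M,j}$; in (3), $N\subseteq \im x^j_E$ yields $\rk x^j_M=\rk^j(E)-\dim_k N$ as a constant; in (4), $\ker x^j_E\subseteq N$ gives $\ker x^j_N=\ker x^j_E$, so $\rk^j(N)=\rk^j(E)-\dim_k M$.

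For (1) and (2), I would use the canonical filtration $F^p:=\im\!\big(\bigwedge^p(N)\otimes \bigwedge^{d_E-p}(E)\to \bigwedge^{d_E}(E)\big)$, whose associated graded is $F^p/F^{p+1}\cong \bigwedge^p(N)\otimes \bigwedge^{d_E-p}(M)$. On $U$, rank additivity places the Plücker line $\bigwedge^{d_E}(\im x^j_E)$ in $F^{d_N}\smallsetminus F^{d_N+1}$, with leading term $\bigwedge^{d_N}(\im x^j_N)\otimes \bigwedge^{d_M}(\im x^j_M)$ in $F^{d_N}/F^{d_N+1}$. Composing $\mspl_E\circ \msim^j_E$ with the linear rational projection $\PP(F^{d_N})\dashrightarrow \PP(F^{d_N}/F^{d_N+1})$ then reproduces the Segre composition of $\mspl_N\circ \msim^j_N$ and $\mspl_M\circ \msim^j_M$, a morphism of degree $\deg^j(N)+\deg^j(M)$. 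In coordinates this projection amounts to extracting a sub-tuple of the Plücker coordinates of $\mspl_E\circ \msim^j_E$ (which are polynomials of degree $\deg^j(E)$); the extracted sub-tuple may acquire a new common factor, so its reduced degree is at most $\deg^j(E)$, giving $\deg^j(N)+\deg^j(M)\le \deg^j(E)$, i.e.\ (1). In the split case (2), $\im x^j_E=\im x^j_N\oplus \im x^j_M$ and the Plücker line sits entirely in the summand $\bigwedge^{d_N}(N)\otimes \bigwedge^{d_M}(M)$, so all non-leading Plücker coordinates vanish identically and the inequality becomes an equality.

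For (3), $N\subseteq \im x^j_E$ means $\im x^j_E=\pi^{-1}(\im x^j_M)$. Fixing $\omega_N\in \bigwedge^{\dim_k N}(N)\smallsetminus\{0\}$, the ``wedge-with-$\omega_N$'' map $L_{\omega_N}\colon \bigwedge^{d_M}(M)\to \bigwedge^{d_E}(E)$, $\eta\mapsto \omega_N\wedge \widetilde{\eta}$ for any lift $\widetilde{\eta}\in \bigwedge^{d_M}(E)$, is a well-defined linear embedding (independent of the lift, since $\omega_N$ annihilates anything with a factor from $N$) and sends $\bigwedge^{d_M}(\im x^j_M)$ to $\bigwedge^{d_E}(\im x^j_E)$. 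Hence $\mspl_E\circ \msim^j_E=\PP(L_{\omega_N})\circ \mspl_M\circ \msim^j_M$, and a linear embedding preserves the degree, yielding $\deg^j(E)=\deg^j(M)$.

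For (4), I would pick a basis $e_1,\ldots,e_n$ of $N$ and extend by $e_{n+1},\ldots,e_{n+m}$ so that the $\pi(e_{n+k})$ form a basis of $M$, and write $x^j_E$ in the block form $\bigl(\begin{smallmatrix} B(x) & C(x) \\ 0 & D(x)\end{smallmatrix}\bigr)$ with $B(x)=x^j_N$ and $D(x)=x^j_M$. The hypothesis $\ker x^j_E\subseteq N$ translates to the injectivity of the induced map $\ker D(x)\to N/\im B(x)$ given by $C(x)$, which forces the ``right columns'' $c_k(x):=x^j_E(e_{n+k})$ to be linearly independent in $E$ and to meet $\im x^j_N$ trivially; the dimension identity $\dim \im x^j_E=\rk^j(N)+\dim_k M$ then yields the direct sum $\im x^j_E=\im x^j_N\oplus \mathrm{span}(c_1(x),\ldots,c_m(x))$ for every $x\in V(\fg)\smallsetminus\{0\}$. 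Taking top wedges gives
\[
\bigwedge^{d_E}(\im x^j_E)=\big(\bigwedge^{d_N}\im x^j_N\big)\wedge (c_1\wedge\cdots\wedge c_m),
\]
so $\mspl_E\circ \msim^j_E$ factors as $\mu\circ \mathrm{Segre}\circ(\mspl_N\circ \msim^j_N,\,\varphi_c)$, where $\varphi_c([x]):=[c_1(x)\wedge\cdots\wedge c_m(x)]$ and $\mu\colon \bigwedge^{d_N}(N)\otimes \bigwedge^m(E)\to \bigwedge^{d_E}(E)$ is wedge-multiplication, whose kernel the image avoids by the direct-sum decomposition. Since degrees are multiplicative under the Segre embedding and preserved by linear maps away from their kernels, $\deg^j(E)=\deg^j(N)+\deg(\varphi_c)$. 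The main obstacle is the final step: showing $\deg(\varphi_c)=jm$ exactly, not merely $\le jm$. The Plücker coordinates of $\varphi_c$ are homogeneous of degree $jm$ in the coordinates of $x$; any positive-degree common factor would have a zero on $\PP(V(\fg))\cong \PP^{r-1}$ (assuming $r\ge 2$), forcing linear dependence of the $c_k$'s at that point and contradicting the hypothesis. Hence $\deg(\varphi_c)=jm$, yielding $\deg^j(E)=\deg^j(N)+j\dim_k M$.
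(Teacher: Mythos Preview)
Your argument is correct and follows essentially the same strategy as the paper's proof. For parts (3) and (4) your approach is virtually identical to the paper's: the paper also builds the ``wedge with a top form of $N$'' linear embedding in (3), and in (4) it too establishes the direct sum $\im x^j_E=\im x^j_N\oplus x^j_E(V)$ and introduces the morphism you call $\varphi_c$ (the paper calls it $\xi$), obtaining $\deg(\xi)=j\dim_kM$ from the fact that $\xi$ is globally defined on $\PP(V(\fg))$---exactly your ``no common factor'' argument, which is just Corollary~\ref{MPS4}.

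For parts (1) and (2) the underlying mechanism is the same, but your packaging via the filtration $F^p$ on $\bigwedge^{d_E}(E)$ and the Segre map is more conceptual than the paper's explicit coordinate bookkeeping. The paper picks $J\in\cT(d_E)$, expands $\bigwedge^{d_E}(x^j_E)(v_J)$ directly, and then runs a multiplicity argument on prime factors to show that a specific Pl\"ucker coordinate $\gamma_{K_0\sqcup K'_0}$ of $\mspl_E\circ\msim^j_E$ factors as $g'f_{K_0}g_{K'_0}$, giving $\deg^j(E)=\deg(g')+\deg^j(N)+\deg^j(M)$. Your route---observe that the image lies in $\PP(F^{d_N})\smallsetminus\PP(F^{d_N+1})$ on $U$, project to $\PP(F^{d_N}/F^{d_N+1})$, and identify the result with the Segre of the two constituent maps---extracts exactly the sub-tuple $(\gamma_{I\sqcup I'})$ of the reduced defining system and compares its (possibly smaller) reduced degree with $\deg^j(E)$. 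Both arguments hinge on the same fact: the $(I,I')$-indexed Pl\"ucker coordinates of $E$ are, up to a common factor, the products $f_Ig_{I'}$. Your version avoids the prime-multiplicity analysis at the cost of invoking Lemma~\ref{MPS2} once more; the paper's version is more self-contained but heavier in notation.

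One small remark: your appeal to Corollary~\ref{MPS4} in part (4) (and likewise the paper's) tacitly uses $\dim_kV(\fg)\ge 2$, since for $\PP^0$ a nonconstant homogeneous polynomial has empty zero locus. This is harmless, as all degrees vanish when $\dim_kV(\fg)\le 1$.
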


\begin{proof} Let $\{v_1,\ldots,v_\ell\}$ be a basis of $E$ such that $\{v_1,\ldots, v_n\}$ is a basis of $N$. We shall compute defining sets of polynomials via this basis. Base 
change amounts to composing morphisms by an automorphism of a suitable $\PP^r$, which does not affect the degree.

Let $d_X := \rk^j(X)$ for $X \in \{M,N,E\}$ . As before, we define $\cS(d_E)$ to be the set of $d_E$-element subsets of $[1,\ell]:=\{1,\ldots,\ell\}$ and let $S_N(d_N)$ and $S_M(d_M)$ denote the 
corresponding sets for $[1,n]$ and $[n\!+\!1,\ell]$, respectively. For a subset $J \subseteq [1,\ell]$, we set $J_N := J\cap [1,n]$ as well as $J_M := J\cap [n\!+\!1,\ell]$ and put 
\[ \cT(d_E) := \{ J \in \cS(d_E) \ ; \  J_X \in \cS_X(d_X) \ \text{for} \ X \in \{M,N\}\}.\]
Given elements $v,w \in V$ of a vector space $V$, we shall write $v \approx w$ to indicate that $kv=kw$. 
 
(1) We assume that $d_M,d_N \ne 0$, leaving the requisite modifications of the ensuing arguments for the remaining cases to the reader.

By choice of the basis, the vectors $w_j := \pi(v_j)$ form a basis $\{w_{n+1},\ldots,w_\ell\}$ of $M$. In view of our current assumption, there exists $J \in \cT(d_E)$ such that 
\[O := \{ [x] \in \PP(V(\fg)) \ ; \ \bigwedge^{d_N}(x^j_N)(v_{J_N}) \ne 0 \ \text{and} \ \bigwedge^{d_M}(x^j_M)(w_{J_M}) \ne 0\}\] 
is a non-empty open subset of $U_{N,j}\cap U_{M,j} \subseteq U_{E,j}$.

Let $(f_I)_{I\in \cS_N(d_N)}$, $(g_{I'})_{I' \in \cS_M(d_M)}$ and $(h_{I''})_{I'' \in \cS(d_M)}$ be reduced defining systems for the morphisms $\mspl_N \circ \msim^j_N$, $\mspl_M \circ \msim^j_N$ and 
\[ \zeta_J : O \lra \PP(\bigwedge^{d_M}(E)) \ \ ; \ \ [x] \mapsto \bigwedge^{d_M}(x^j_E)(v_{J_M}),\]
respectively. In view of
\[ \bigwedge^{d_M}(x^j_M)(w_{J_M}) = \bigwedge^{d_M}(\pi)(\zeta_J(v_{J_M})) \approx \sum_{I' \in \cS_M(d_M)}h_{I'}(x)w_{I'} \ \ \ \ \ \ \ \ \ \ \forall \ x \in O\]
there exists $I_0' \in \cS_M(d_M)$ such that $h_{I'_0}\ne 0$. By the same token, Lemma \ref{MPS2}(2) provides a homogeneous polynomial $h$ such that $O \subseteq D(h)$ and 
\[ (\ast) \ \ \ \ \ \ \ \ \ \ h_{I'} = hg_{I'} \ \ \ \ \text{for all} \ \ I' \in \cS_M(d_M).\] 
Let $\cP(d_E) := \{ (I,I'') \in \cS_N(d_N)\!\times\!\cS(d_M) \ ; \ I\cap I'' = \emptyset\}$, so that $\cT(d_E) \subseteq \{I\sqcup I'' \ ; \ (I,I'') \in \cP(d_E)\}$. Given $[x] \in O \subseteq U_{E,j}$, we have
\begin{eqnarray*}
\bigwedge^{d_E}(x^j_E)(v_J) & = & \bigwedge^{d_N}(x^j_N)(v_{J_N})\wedge\bigwedge^{d_M}(x^j_E)(v_{J_M}) \\
& \approx & \sum_{I \in \cS_N(d_N)} f_I(x)v_I\wedge \sum_{I'' \in \cS(d_M)} h_{I''}(x)v_{I''}\\
& = & \sum_{(I,I'') \in \cP(d_E)} \pm f_I(x)h_{I''}(x) v_{I\sqcup I''}.\end{eqnarray*}
Since there exists $I_0 \in \cS_N(d_N)$ such that $f_{I_0}\ne 0$, we conclude that the left-hand side does not vanish on the non-empty open set $O':=O\cap D(f_{I_0})\cap D(h_{I'_0})$.
Consequently, the morphism
\[ \mspl_E \circ \msim^j_E|_{O'} : O'  \lra \PP(\bigwedge^{d_E}(E)) \ \ ; \ \ [x] \mapsto \overline{\ev}([x^j_E])\]
is defined by the polynomials $(\pm f_Ih_{I''})_{(I,I'') \in \cP(d_E)}$ together with zero polynomials. Thus, if $(\gamma_Q)_{Q \in \cS(d_E)}$ is a reduced defining system for $ \mspl_E \circ \msim^j_E$, 
then Lemma \ref{MPS2}(2) provides a homogeneous polynomial $g$ such that
\[ (\ast \ast) \ \ \ \ \ \ \ \ \ g\gamma_{I\sqcup I''} = \pm f_Ih_{I''} \ \ \ \ \ \ \ \ \text{for all} \ (I,I'') \in \cP(d_E).\]
Let $p$ be an irreducible factor of $g$. Then there exist $K_0 \in \cS_N(d_N)$ and $K'_0 \in \cS_M(d_M)$ such that $p\nmid f_{K_0}$ and $p\nmid g_{K'_0}$. Thus, $p\nmid f_{K_0}g_{K'_0}$, while
($\ast$) and ($\ast\ast$) imply $g\gamma_{K_0\sqcup K'_0} = \pm hf_{K_0}g_{K'_0}$.  As a result, $m_p(g) \le m_p(h)$, so that $h = gg'$ for some homogeneous polynomial $g'$. Identities ($\ast$) and ($\ast\ast$) now yield
\[ \gamma_{K_0\sqcup K'_0} = \pm g'f_{K_0}g_{K'_0},\]
whence
\begin{eqnarray*} 
\deg^j(E)& = & \deg(\gamma_{K_0\sqcup K'_0}) = \deg(g')\!+\!\deg(f_{K_0})\!+\!\deg(g_{K'_0}) = \deg(g')\!+\!\deg^j(N)\!+\!\deg^j(M)\\ 
& \ge & \deg^j(N)\!+\!\deg^j(M),
\end{eqnarray*}
as desired.

(2) Since the sequence splits, we have $\rk^j(E)=\rk^j(N)\!+\!\rk^j(M)$. We adopt the notation from (1) and let $(f_I)_{I \in \cS_N(d_N)}$ and $(g_{I'})_{I' \in \cS_M(d_M)}$ be reduced defining systems for 
$\mspl_N\circ \msim^j_N$ and $\mspl_M\circ \msim^j_M$, respectively. Then we have for $x \in O$,
\begin{eqnarray*}
0 \ne \bigwedge^{d_E}(x^j_E)(v_J) & = & \bigwedge^{d_N}(x^j_N)(v_{J_N})\wedge\bigwedge^{d_M}(x^j_M)(v_{J_M}) \\
& \approx & \sum_{I \in \cS_N(d_N)} f_I(x)v_I\wedge \sum_{I' \in \cS_M(d_M)} h_{I'}(x)v_{I'}\\
& = & \sum_{(I,I') \in \cS_N(d_N)\times \cS_M(d_M)} f_I(x)h_{I'}(x) v_{I\sqcup I'}.\end{eqnarray*}
Since the polynomials $(f_Ig_{I'})_{(I,I') \in \cS_N(d_N)\times \cS_M(d_M)}$ have greatest common divisor $1$, our assertion follows.

(3) Let $\sigma : M \lra E$ be a $k$-linear splitting of $\pi$. If $d_E=0$, then $N=(0)$, and there is nothing to be shown. Alternatively, let $x \in V(\fg)\!\smallsetminus\!\{0\}$. Then $\dim_k\im x^j_M = 
\dim_k\im x^j_E\!-\!\dim_k(\im x^j_E\cap N) = \dim_k\im x^j_E\!-\!\dim_kN$, so that $M$ has constant $j$-rank $d_M= d_E\!-\!\dim_kN$. The splitting property implies that
\[ (\dagger) \ \ \ \ \ \ \ \ \ V = \sigma(\pi(V))\oplus N\]
for every subspace $V \subseteq E$ containing $N$. In particular, $E = \im \sigma \oplus N$, and the map $\sigma$ gives rise to an injective $k$-linear map 
\[\omega : \bigwedge^{d_M}(M) \lra \bigwedge^{d_E}(E) \ \ ; \ \ x \mapsto \bigwedge^{d_M}(\sigma)(x)\wedge u,\]
where $u \in \bigwedge^{\dim_kN}(N)\smallsetminus\!\{0\}$. Being linear, $\omega$ induces a morphism $\bar{\omega} : \PP(\bigwedge^{d_M}(M)) \lra \PP(\bigwedge^{d_E}(E))$ of degree $1$.
Let $\lambda : \Gr_{d_M}(M) \lra \Gr_{d_E}(E)$ be defined via $\lambda(W):=\sigma(W)\!\oplus\!N$. In view of ($\dagger$), direct computation reveals that $\bar{\omega}\circ \mspl_M = \mspl_E\circ
\lambda$ as well as $\lambda \circ \msim^j_M=\msim^j_E$, so that 
\[ \bar{\omega} \circ \mspl_M \circ \msim^j_M = \mspl_E\circ\lambda\circ \msim^j_M = \mspl_E\circ \msim^j_E.\]
Owing to Corollary \ref{MPS5}, we thus arrive at
\[ \deg^j(E) = \deg(\mspl_E\circ \msim^j_E) = \deg(\bar{\omega} \circ \mspl_M \circ \msim^j_M) = \deg(\mspl_M \circ \msim^j_M) = \deg^j(M),\]
as asserted.

(4) By assumption, we have $\ker x^j_N = \ker x^j_E$ for all $x \in V(\fg)\!\smallsetminus\!\{0\}$, so that $N$ has constant $j$-rank
\[ \rk^j(N) = \dim_kN\!-\!\dim_k\ker x^j_E = \dim_kE\!-\!\dim_k\ker x^j_E\!-\!\dim_kM = \rk^j(E)\!-\!\dim_kM\]
for all $x \in V(\fg)\!\smallsetminus\!\{0\}$. 

Let $V:=\langle\{v_{n+1},\ldots,v_\ell\}\rangle$, so that $E=N\!\oplus\!V$. If $x \in V(\fg)\!\smallsetminus\!\{0\}$, the condition $\ker x_E^j \subseteq N$ implies 
\[ (\dagger\dagger) \ \ \ \ \ \ \ \ \ \im x_E^j = \im x^j_N\!\oplus\! x^j_E(V)\]
as well as $\dim_kx^j_E(V)=\ell\!-\!n$.

Let $J \in \cS(d_E)$ be such that $O_J:= \{ [x] \in \PP(V(\fg)) \ ; \ \bigwedge^{d_E}(x^j_E)(v_J)\ne 0\} \ne \emptyset$. Given $[x] \in O_J$, we have
\[ 0 \ne \bigwedge^{d_E}(x^j_E)(v_J) = \bigwedge^{|J_N|}(x^j_N)(v_{J_N})\wedge \bigwedge^{|J_M|}(x^j_E)(v_{J_M}).\]
Hence $|J_N|\le d_N$ and ($\dagger\dagger$) implies $|J_M| \le \dim_kM$. Since
\[ |J| = d_E = d_N\!+\!\dim_kM \ \ \text{while} \ \ J = J_N \sqcup J_M,\]
we obtain $|J_N| = d_N$ and $|J_M|=\ell\!-\!n$, so that $J_M = [n\!+\!1,\ell]$. 

Since $x_E^j|_V$ is injective for every $x \in V(\fg)\!\smallsetminus\!\{0\}$, we have $\bigwedge^{\ell-n}(x_E^j)(v_{[n+1,\ell]})\ne 0$ for all $[x] \in \PP(V(\fg))$. There results a morphism
\[ \xi : \PP(V(\fg)) \lra \PP(\bigwedge^{\ell\!-\!n}(E)) \ \ ; \ \ [x] \mapsto \bigwedge^{\ell-n}(x^j_E)(v_{[n+1,\ell]}),\]
whose coordinates are $[\det(A(x_E^j)_{(Q,[n\!+\!1,\ell])})]_{Q \in \cS(n\!-\!\ell)}$. Thus, $\xi$ is defined by homogeneous polynomials of degree $j(\ell\!-\!n)$, so that $\deg(\xi) = j(\ell\!-\!n)$. 

Let $(g_K)_{K\in \cS_N(d_N)}$ and $(h_Q)_{Q \in \cS(\ell\!-\!n)}$ be reduced defining systems for $\mspl_N\circ\msim^j_N$ and $\xi$, respectively.  As before, we put $\cP(d_E)= 
\{(K,Q) \in \cS_N(d_N)\!\times\!\cS(\ell\!-\!n) \ ; \ K\cap Q = \emptyset\}$. Given $[x] \in \PP(V(\fg))$, there exists $J \in \cS(d_E)$ such that $[x] \in O_J$.  The observations above imply
\begin{eqnarray*} 
\mspl_E\circ \im^j_E([x]) & \approx & \bigwedge^{d_E}(x^j_E)(v_J) \approx \sum_{K \in \cS_N(d_N)}\sum_{Q \in \cS(\ell-n)} g_K(x)h_Q(x) v_K\wedge v_Q \\
& = &\sum_{(K,Q) \in \cP(d_E)}\pm g_K(x)h_Q(x) v_{K\sqcup Q}.
\end{eqnarray*}
As a result, the polynomials $(\pm g_Kh_Q)_{(K,Q) \in \cP(d_E)}$ together with zero polynomials constitute a defining system for $\mspl_E\circ \msim^j_E: \PP(V(\fg)) \lra \PP(\bigwedge^{d_E}(E))$. 
Corollary \ref{MPS4} implies that the system is reduced, so that
\[ \deg^j(E) = \deg^j(N)\!+\!\deg(\xi) = \deg^j(N)\!+\!j\dim_kM,\]
as desired. \end{proof}

\bigskip

\begin{Remarks} (1) \ The additivity of the generic $j$-ranks is usually not a consequence of the exactness of the sequence. Consider the exact sequence
\[ (0) \lra k \lra U_0(\fe_r) \lra U_0(\fe_r)/k \lra (0)\]
of modules of constant ranks. Since $\rk(k)=0$ and $\rk(U_0(\fe_r))=p^{r-1}(p\!-\!1)$ while $\rk(U_0(\fe_r)/k)=p^{r-1}(p\!-\!1)\!-\!1$, we have $\rk(U_0(\fe_r))> \rk(U_0(\fe_r)/k)\!+\!\rk(k)$.

(2) \ The additivity of generic $j$-ranks holds whenever the sequence is {\it locally split}, i.e.\ when the restricted sequence
\[ (0) \lra N|_{k[x]} \lra E|_{k[x]} \lra M|_{k[x]} \lra (0)\]
is split exact for every $x \in V(\fg)$. Here $k[x] = U_0(kx)$ is the subalgebra of $U_0(\fg)$ generated by $x$.

(3) \ Lemma \ref{DGP4}(2) does not hold for arbitrary exact sequences of modules, whose generic $j$-ranks are additive. We consider the Lie algebra $\fe_2 := kx\oplus ky$ as well as the 
$U_0(\fe_2)$-module $E:= U_0(\fe_2)/\Rad^2(U_0(\fe_2))$ with its canonical basis $\{\bar{1},\bar{x},\bar{y}\}$. Let $N:= k\bar{y}$. Then we have $\rk(E)=1=\rk(E/N)$, while $\rk(N)=0$. The module $E$ 
has degree $\deg(E) = 1$, while $y_{E/N}=0$ and $y_N=0$ imply $\deg(N)=0=\deg(E/N)$. 

(4) \ If $M$ is a $U_0(\fe_r)$-module of constant $j$-rank, then $\tilde{\fK}^j(M):=\sum_{x \in \fe_r\!\smallsetminus\!\{0\}}\ker x_M^j$ is a submodule of $M$ and Lemma \ref{DGP4}(4) shows
that $\tilde{\fK}^j(M)$ has constant $j$-rank while $\deg^j(M) = \deg^j(\tilde{\fK}^j(M))\!+\!j\dim_kM/\tilde{\fK}^j(M)$. In particular, we have $M=\tilde{\fK}^j(M)$, whenever $\deg^j(M)<j$.\end{Remarks}

\bigskip

\begin{Remark} Let $(\fg,[p])$ be a restricted Lie algebra such that $\zeta : \PP^1 \lra \PP(\fg)$ is a non-constant morphism that factors through $\PP(V(\fg))$. If $M$ is a $U_0(\fg)$-module of
constant $j$-rank, one may consider $\deg^j_\zeta(M) := \deg(\mspl_M\circ \msim_M^j\circ \zeta)$. For $\fg = \fsl(2)$, the ``Veronese embedding'' 
\[ \zeta : \PP^1 \lra \PP(\fsl(2)) \ \ ; \ \ (x\!:\!y) \mapsto [\left(\begin{smallmatrix} xy & x^2 \\ -y^2 & -xy\end{smallmatrix}\right)]\] 
is a morphism of degree $\deg(\zeta)=2$ such that $\im \zeta = \PP(V(\fsl(2)))$ and $\PP^1\lra \PP(V(\fsl(2)))$ is an isomorphism. \end{Remark}

\bigskip

\subsection{The rank-degree formula}\label{S:RDF}
Let $(\fg,[p])$ be a restricted Lie algebra, $M$ be a $U_0(\fg)$-module of dimension $n$. For $d\le n$, the assignment
\[ (\, , \,) : \bigwedge^d(M^\ast)\!\times\!\bigwedge^d(M) \lra k \ \ ; \ \ (f_1\wedge f_2\wedge \cdots\wedge f_d, m_1\wedge m_2\wedge \cdots\wedge m_d) \mapsto \det((f_i(m_j)))\]
defines a non-degenerate bilinear form. The following subsidiary result shows that this form is compatible with the actions of $U_0(\fg)$ on $M$ and $M^\ast$. 

\bigskip

\begin{Lem} \label{RDF1} Let $M$ be an $n$-dimensional $U_0(\fg)$-module, $d \le n$. For $j \in \{1,\ldots,p\!-\!1\}$. We have 
\[(\bigwedge^d(x^j_{M^\ast})(a),m) = (-1)^{jd}(a,\bigwedge^d(x^j_M)(m))\] 
for all $x \in \fg$, $a \in \bigwedge^d(M^\ast), m \in \bigwedge^d(M)$.\end{Lem}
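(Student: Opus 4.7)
The plan is to reduce the identity to a standard adjointness property of exterior powers, after tracking the sign that arises from the antipode of $U_0(\fg)$.

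First I would describe the $U_0(\fg)$-action on $M^\ast$ explicitly. Since the antipode of $U_0(\fg)$ acts as $S(x)=-x$ on $x\in\fg$, the dual action is $(x\dact f)(m)=-f(x\dact m)$. Iterating gives $(x^j\dact f)(m) = (-1)^j f(x^j\dact m)$, so if I write $g:=(x_M)^j \in \End_k(M)$ and let $g^\vee \in \End_k(M^\ast)$ denote its transpose $f\mapsto f\circ g$, then
\[ (x_{M^\ast})^j = (-1)^j\, g^\vee.\]
Applying the functor $\bigwedge^d$ and using that $\bigwedge^d(\lambda h) = \lambda^d \bigwedge^d(h)$ for scalars $\lambda$, I obtain
\[ \bigwedge^d((x_{M^\ast})^j) = (-1)^{jd}\,\bigwedge^d(g^\vee).\]

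Next I would prove the general adjointness statement: for every $g \in \End_k(M)$,
\[ (\bigwedge^d(g^\vee)(a), m) \ =\ (a, \bigwedge^d(g)(m)) \qquad \forall\ a \in \bigwedge^d(M^\ast),\ m \in \bigwedge^d(M).\]
Both sides are bilinear, so it suffices to verify this on decomposable tensors $a=f_1\wedge\cdots\wedge f_d$ and $m=m_1\wedge\cdots\wedge m_d$. By the definition of $\bigwedge^d(g^\vee)$ and of the pairing from Lemma \ref{Pr3},
\[ (\bigwedge^d(g^\vee)(a), m) = ((f_1\!\circ\! g)\wedge\cdots\wedge (f_d\!\circ\! g),\ m_1\wedge\cdots\wedge m_d) = \det((f_i(g(m_j)))),\]
while
\[ (a, \bigwedge^d(g)(m)) = (f_1\wedge\cdots\wedge f_d,\ g(m_1)\wedge\cdots\wedge g(m_d)) = \det((f_i(g(m_j)))),\]
and the two determinants agree.

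Combining these two steps, and taking $g=(x_M)^j$, yields
\[ (\bigwedge^d((x_{M^\ast})^j)(a),m) = (-1)^{jd}(\bigwedge^d(g^\vee)(a),m) = (-1)^{jd}(a,\bigwedge^d(x_M^j)(m)),\]
which is the claim. I don't expect any genuine obstacle: the only delicate point is the sign bookkeeping, where the factor $-1$ appears once from the antipode for each of the $j$ iterations of $x$ on $M^\ast$, and the multiplicative behaviour of $\bigwedge^d$ then amplifies this to $(-1)^{jd}$ on a $d$-fold decomposable tensor.
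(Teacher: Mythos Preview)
Your proof is correct and follows essentially the same approach as the paper: both reduce to decomposable tensors and use the determinant formula for the pairing, tracking the sign $(-1)^j$ coming from the antipode on each factor. The only cosmetic difference is that you isolate the adjointness $(\bigwedge^d(g^\vee)(a),m)=(a,\bigwedge^d(g)(m))$ as a separate step before inserting the sign, whereas the paper carries out the entire computation in a single chain.
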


\begin{proof} It suffices to verify the assertion for $a=f_1\wedge f_2 \wedge \cdots \wedge f_d$, and $m=m_1\wedge m_2 \wedge \cdots \wedge m_d$. For $x \in \fg$ and $\ell \in \{1,\ldots,p\!-\!1\}$
we have
\begin{eqnarray*}
(\bigwedge^d(x^\ell_{M^\ast})(f_1\wedge \cdots \wedge f_d), m_1\wedge \cdots \wedge m_d) & = & (x^\ell\dact f_1\wedge \cdots \wedge x^\ell \dact f_d, m_1\wedge \cdots \wedge m_d) \\ 
&= &\det(((x^\ell\dact f_i)(m_j)))\\ 
& = &\det(((-1)^\ell f_i(x^\ell\dact m_j))) = (-1)^{\ell d} \det((f_i(x^j\dact m_j))) \\
&= & (-1)^{\ell d} (f_1\wedge \cdots \wedge f_d, \bigwedge^d(x^\ell_M)(m_1\wedge \cdots \wedge m_d)),
\end{eqnarray*} 
as desired. \end{proof}

\bigskip

\begin{Thm} \label{RDF2} Suppose that $V(\fg)\subseteq \fg$ is a subspace. Let $M$ be a $U_0(\fg)$-module of constant $j$-rank. Then we have
\[ \deg^j(M)\!+\! \deg^j(M^\ast) = j\rk^j(M).\]
\end{Thm}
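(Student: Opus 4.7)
The plan is to compute both $\deg^j(M)$ and $\deg^j(M^\ast)$ from a single generic matrix representing $x^j_M$, and to read off the identity from the rank-$\le 1$ structure of its $d$-th compound matrix, where $d := \rk^j(M)$.

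First I would fix dual bases $\{v_1,\ldots,v_n\}$ of $M$ and $\{\delta_1,\ldots,\delta_n\}$ of $M^\ast$, and linear coordinates $X_1,\ldots,X_r$ on the subspace $V(\fg)$; I may assume $r \ge 2$ throughout (the formula being trivial or degenerate if $\PP(V(\fg))$ is a point). Writing $A(x) \in \Mat_n(k[X_1,\ldots,X_r]_j)$ for the matrix of $x^j_M$, Lemma \ref{RDF1} with $d=1$ identifies the matrix of $x^j_{M^\ast}$ in the dual basis as $(-1)^j A(x)^T$; in particular $\rk^j(M^\ast)=d$ and $M^\ast$ has constant $j$-rank as well. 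Setting $C_{IJ}(x) := \det(A(x)_{I,J})$ for $I,J \in \cS(d)$, the Pl\"ucker coordinates of $\im x^j_M$ (resp.\ of $\im x^j_{M^\ast}$) are, up to sign, the column $(C_{I,J_0})_{I \in \cS(d)}$ for any fixed $J_0$ (resp.\ the row $(C_{I_0,J})_{J \in \cS(d)}$ for any fixed $I_0$) of the $d$-th compound matrix $C$, defining the two relevant morphisms on the open locus where the chosen row or column does not vanish. Dividing each such defining system by its greatest common divisor (Lemma \ref{MPS2}) yields
\[ \deg^j(M) = jd - \deg(\gcd\nolimits_I C_{I,J_0}), \qquad \deg^j(M^\ast) = jd - \deg(\gcd\nolimits_J C_{I_0,J}). \]

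The second step is to exploit that $\rk A(x) \le d$ on all of $V(\fg)$, which forces the compound matrix $C$ to have rank $\le 1$ on $V(\fg)$ and yields the polynomial identities $C_{IJ}C_{I'J'} = C_{IJ'}C_{I'J}$ for all indices. Fixing $I_0,J_0$ with $\gamma := C_{I_0,J_0} \neq 0$ and putting $\alpha := \gcd_I C_{I,J_0}$, $\beta := \gcd_J C_{I_0,J}$, a prime-by-prime valuation argument in the UFD $k[X_1,\ldots,X_r]$ shows that $\deg(\gcd_I C_{IJ}) = \deg\alpha$ independently of $J$ (dually for rows), and further that $\gcd_{I,J} C_{IJ} \cdot \gamma = \alpha\beta$ up to a unit. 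Hence
\[ \deg(\gcd\nolimits_{I,J} C_{IJ}) = \deg\alpha + \deg\beta - jd. \]

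The main obstacle, and the point at which the constant-rank hypothesis enters decisively, is to show that this global gcd is a nonzero constant. The argument I have in mind is as follows: if $g := \gcd_{I,J} C_{IJ}$ were nonconstant, its vanishing locus would be a nonempty hypersurface in $V(\fg) \cong k^r$ containing some $x \neq 0$; but then every $d\times d$ minor of $A(x)$ would vanish, forcing $\rk A(x) < d$ and contradicting the constant $j$-rank assumption. Therefore $\deg\alpha + \deg\beta = jd$, and substitution into the formulas above yields
\[ \deg^j(M) + \deg^j(M^\ast) = 2jd - (\deg\alpha + \deg\beta) = jd = j\rk^j(M), \]
as required.
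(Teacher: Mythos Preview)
Your proof is correct and follows essentially the same approach as the paper's: both arguments exploit the rank-$\le 1$ structure of the $d$-th compound matrix $(C_{IJ})$ of $x \mapsto x^j_M$ and invoke the constant-rank hypothesis to force the global gcd of its entries to be constant. The only difference is packaging---the paper obtains the factorization $\gamma_{(I,J)} = \tilde{h}\,g_I f_J$ (with $\tilde{h}$ a unit) via repeated appeals to Lemma~\ref{MPS2}, whereas you carry out the equivalent computation directly with valuations in the UFD $k[X_1,\ldots,X_r]$.
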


\begin{proof} We put $d:=\rk^j(M)$. If $d=0$, then $\deg^j(M)=0=\deg^j(M^\ast)$. We therefore assume $d>0$. Adopting our previous notation, we let $\{v_1,\ldots,v_n\}$ be a basis of $M$, 
with dual basis $\{\delta_1,\ldots, \delta_n\} \subseteq M^\ast$. Let $A(x_M^j) \in \Mat_n(k)$ be the $(n\!\times\!n)$-matrix representing $x_M^j$ relative to $\{v_1,\ldots,v_n\}$. Given $I,J \in \cS(d)$, 
Lemma \ref{RDF1} implies
\[ (\ast) \ \ \ \ \ \ \ \ ( \bigwedge^d(x^j_{M^\ast})(\delta_I),v_J) = (-1)^{jd} (\delta_I,\bigwedge^d(x^j_M)(v_J)) = (-1)^{jd}\det(A(x_M^j)_{(I,J)})\]
for every $x \in \fg$. 

Now let $(f_I)_{I\in \cS(d)}$ and $(g_I)_{I\in \cS(d)}$ be reduced defining systems for the morphisms $\mspl_{M^\ast}\circ \msim^j_{M^\ast}$ and $\mspl_M\circ \msim^j_M$, respectively. 
Given $I \in \cS(d)$, we consider the open set $O_I := \{[x] \in \PP(V(\fg)) \ ; \ \bigwedge^d(x^j_{M^\ast})(\delta_I)\ne 0\}$. Then we have
\[ \bigwedge^d(x^\ell_{M^\ast})(\delta_I) \approx \sum_{K\in \cS(d)} f_K(x)\delta_K \ \ \ \ \ \ \ \ \ \ \forall \ [x] \in O_I.\]
Consequently, Lemma \ref{Pr3} in conjunction with ($\ast$) implies
\[ (\ast \ast) \ \ \ \ \ \ \ \ [f_J(x)]_{J \in \cS(d)} = [\det(A(x^j_M)_{(I,J)})]_{J \in \cS(d)} \ \ \ \ \ \ \ \ \ \ \forall \ [x] \in O_I,\]
where the elements belong to $\PP^{\binom{n}{d}-1}$. Thus, we have two defining systems for the morphism 
\[ \mspl_{M^\ast}\circ \msim^j_{M^\ast}|_{O_I}: O_I \lra \PP^{\binom{n}{d}-1}.\]
Note that the morphism $V(\fg) \lra k \ ; \ x \mapsto \det(A(x_M^j)_{(I,J)})$ is given by a homogeneous polynomial $\gamma_{(I,J)}$ of degree $jd$. Thus, if $O_I \neq \emptyset$, then 
Lemma \ref{MPS2} provides a homogeneous polynomial $h_I$ with $O_I \subseteq D(h_I)$ such that
\[ (\ast \ast\ast) \ \ \ \ \ \ \ \ \gamma_{(I,J)} = h_I f_J \ \ \ \ \ \ \ \ \ \forall \ J \in \cS(d).\] 
For $O_I=\emptyset$, identity ($\ast$) implies $\gamma_{(I,J)}=0$, so that the choice $h_I=0$ gives the same identity.

For $J \in \cS(d)$, we consider $U_J := \{ [x] \in \PP(V(\fg)) \ ; \ \bigwedge^d(x^j_M)(v_J)\ne 0\}$. Let $J \in \cS(d)$ be such that $U_J\ne \emptyset$. Given $[x] \in U_J$, we have
\[ (\gamma_{(I,J)}(x))_{I\in \cS(d)} \ne 0,\]
whence $f_J(x)\ne 0$. Moreover,
\[ [\gamma_{(I,J)}(x)]_{I \in \cS(d)} = [h_I(x)f_J(x)]_{I\in \cS(d)} = [h_I(x)]_{I\in \cS(d)}  \ \ \ \ \ \ \ \ \forall \ [x] \in U_J,\]
implying that $(h_I)_{I\in \cS(d)}$ and $(g_I)_{I\in \cS(d)}$ are defining systems for the morphism $\mspl_M\circ\msim^j_M|_{U_J}$, with the latter being reduced. Consequently, Lemma \ref{MPS2} furnishes a homogeneous polynomial $\tilde{h}_J$ such that $U_J \subseteq D(\tilde{h}_J)$ and
\[ h_I = \tilde{h}_Jg_I \ \ \ \ \ \ \ \ \forall \ I \in \cS(d).\] 
Thus, if $U_J,U_{J'} \neq \emptyset$, then $\tilde{h}:= \tilde{h}_J=\tilde{h}_{J'}$, so that $U_{J'} \subseteq D(\tilde{h})$. Since $\PP(V(\fg))=\bigcup_{J\in \cS(d)}U_J$, we conclude that $\PP(V(\fg)) \subseteq D(\tilde{h})$, implying that $\tilde{h}$ is constant.

\medskip
Since the module $M$ has constant $j$-rank $d>0$, we obtain a morphism 
\[ \Phi : \PP(V(\fg)) \lra \PP^{\binom{n}{d}^2-1} \ \ ; \ \ [x] \mapsto [\det(A(x^j_M)_{(I,J)})]_{(I,J)\in \cS(d)^2}\]
of degree $jd$. It now follows from $(\ast\ast\ast)$ that $(g_If_J)_{(I,J) \in \cS(d)^2}$ is a reduced defining system for $\Phi$, so that
\[ j\rk^j(M)= \deg(\Phi) = \deg(g_I)\!+\!\deg(f_J) = \deg^j(M)\!+\!\deg^j(M^\ast)\] 
for every pair $(I,J) \in \cS(d)^2$ such that $g_If_J\ne 0$. \end{proof} 

\bigskip
\noindent
Let $M$ be a $U_0(\fg)$-module. Then $M$ has the {\it equal $j$-images property} if $\msim_M^j$ is constant. We say that $M$ has the {\it equal $j$-kernels property}, provided there exists a subspace 
$W_j \subseteq M$ such that $\ker x^j_M = W_j$ for all $x \in V(\fg)\!\smallsetminus\!\{0\}$. If $M$ has the equal $j$-kernels property for all $j \in \{1,\ldots,p\!-\!1\}$, then $M$ has the {\it equal kernels 
property}. The full subcategory of equal kernels modules will be denoted $\EKP(\fg)$. 

\bigskip

\begin{Cor} \label{RDF3} Suppose that $V(\fg) \subseteq \fg$ is a subspace, and let $M$ be a $U_0(\fg)$-module of constant $j$-rank. Then $M$ has the equal $j$-kernels property if and only if 
$\deg^j(M)=j\rk^j(M)$. \end{Cor}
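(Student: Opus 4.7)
The plan is to deduce this directly from the rank-degree formula (Theorem \ref{RDF2}) together with a duality between kernels and images under $M \leftrightarrow M^\ast$. The key observation is that $\deg^j(M) = j\rk^j(M)$ is equivalent, via Theorem \ref{RDF2}, to $\deg^j(M^\ast) = 0$, and that the latter corresponds to $M^\ast$ having the equal $j$-images property, which in turn is dual to $M$ having the equal $j$-kernels property.

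First, I would verify that $M^\ast$ has constant $j$-rank equal to $\rk^j(M)$. Since the $U_0(\fg)$-action on $M^\ast$ satisfies $(x \dact f)(m) = -f(x \dact m)$, we have $x^j_{M^\ast} = (-1)^j (x^j_M)^\ast$, so $\rk(x^j_{M^\ast}) = \rk(x^j_M)$ for every $x \in V(\fg)$, and constancy is preserved. In particular, Theorem \ref{RDF2} applies to both $M$ and $M^\ast$, yielding
\[\deg^j(M)\!+\!\deg^j(M^\ast) = j\rk^j(M).\]
Consequently, $\deg^j(M) = j\rk^j(M)$ if and only if $\deg^j(M^\ast) = 0$.

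Next, I would argue that $\deg^j(M^\ast) = 0$ is equivalent to the morphism $\msim^j_{M^\ast} : \PP(V(\fg)) \lra \Gr_{\rk^j(M)}(M^\ast)$ being constant. Indeed, a reduced defining system of a morphism to projective space consists of polynomials of degree $0$, i.e.\ nonzero constants, precisely when the morphism is constant; this follows from the argument in Corollary \ref{MPS4} and the definition of degree. Thus $\deg^j(M^\ast) = 0$ if and only if $\im x^j_{M^\ast}$ is the same subspace of $M^\ast$ for all $x \in V(\fg) \smallsetminus \{0\}$, i.e.\ $M^\ast$ has the equal $j$-images property.

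Finally, I would translate equal $j$-images of $M^\ast$ into equal $j$-kernels of $M$ via the standard linear-algebra identity $\im (x^j_M)^\ast = (\ker x^j_M)^\perp$ inside $M^\ast$. Since $x^j_{M^\ast} = (-1)^j(x^j_M)^\ast$, we obtain $\im x^j_{M^\ast} = (\ker x^j_M)^\perp$. The map $W \mapsto W^\perp$ is a bijection between subspaces of $M$ and subspaces of $M^\ast$, so the family $\{\im x^j_{M^\ast}\}_{x \in V(\fg)\smallsetminus\{0\}}$ is constant if and only if the family $\{\ker x^j_M\}_{x \in V(\fg)\smallsetminus\{0\}}$ is constant, i.e.\ $M$ has the equal $j$-kernels property. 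Chaining these three equivalences proves the corollary.

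I don't expect any real obstacle; the proof is essentially bookkeeping, with the only subtlety being the sign $(-1)^j$ in the dual action (which is inconsequential since we only care about kernels and images) and the implicit use of the fact that a morphism to projective space has degree zero exactly when it is constant.
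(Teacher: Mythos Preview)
Your proposal is correct and follows essentially the same approach as the paper's proof: the paper simply states that $M$ has the equal $j$-kernels property if and only if $M^\ast$ has the equal $j$-images property, that the latter is equivalent to $\deg^j(M^\ast)=0$, and then invokes Theorem~\ref{RDF2}. Your version spells out in detail the verification that $M^\ast$ has constant $j$-rank, the equivalence between degree zero and constancy of the morphism, and the kernel--image duality under $W\mapsto W^\perp$, all of which the paper leaves implicit.
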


\begin{proof} A module $M$ has the equal $j$-kernels property if and only if its dual module $M^\ast$ has the equal $j$-images property. Since the latter property is equivalent to $\deg^j(M^\ast)=0$, our
 assertion is a direct consequence of Theorem \ref{RDF2}. \end{proof}

\bigskip

\begin{Example} We consider the elementary Lie algebra $\fe_r$ and choose a basis $\{x_1,\ldots,x_r\} \subseteq \fe_r$. Using multi-index notation, we put $\tau:=(p\!-\!1,\ldots,p\!-\!1) \in \NN^r_0$ 
as well as $\epsilon_i := (\delta_{ij})_{1\le j \le r}$ for $i \in \{1,\ldots, r\}$. Suppose that $p\ge 3$. Setting $v_r := \sum_{i=1}^r x^{\tau-2\epsilon_i} \in U_0(\fe_r)$, we consider the submodule 
$M_{r+2}:=kv_r\oplus\Soc_2(U_0(\fe_r))$ of $U_0(\fe_r)$. This module has Loewy length $\ell\ell(M_{r+2})=3$. 

We write $M_{r+2} = kv_r \oplus \bigoplus_{i=1}^r kx_i.v_r \oplus kx^\tau$ and note that $x_ix_jv_r = \delta_{ij}x^\tau$. For $\lambda = (\lambda_1,\ldots,\lambda_r) \in k^r\!\smallsetminus\!\{0\}$, we 
put $x(\lambda) := \sum_{i=1}^r\lambda_ix_i$. Then we have 
\begin{itemize}
\item $\im x(\lambda)_{M_{r+2}} = k(\sum_{i=1}^r\lambda_ix_i.v_r)\oplus kx^\tau$, as well as
\item $\im x(\lambda)_{M_{r+2}}^2 = k(\sum_{i=1}^r\lambda_i^2)x^\tau$. \end{itemize}
Hence, for $r\ge 2$, the module $M_{r+2}$ has constant rank $\rk(M_{r+2})= 2$, but not constant $2$-rank. As a result, $M_{r+2}$ is neither an equal $1$-images module nor an equal $1$-kernels module, so that $\deg(M_{r+2})=1$. \end{Example}

\bigskip

\subsection{Applications: Self-dual modules and exact sequences}
Given $x \in V(\fg)\!\smallsetminus\!\{0\}$, the subalgebra $k[x] \subseteq U_0(\fg)$ is isomorphic to the truncated polynomial ring $k[T]/(T^p)$. Setting $[i]:= k[x]/(x^i)$ for $1\le i \le p$, we obtain
a full set $\{[1],\ldots,[p]\}$ of representatives for the isomorphism classes of the indecomposable $k[x]$-modules. If $M$ is a $U_0(\fg)$-module, then 
\[ M|_{k[x]} \cong \bigoplus_{i=1}^pa_i(x)[i]\]
for some $a_i(x) \in \NN_0$. This isomorphism class is the {\it Jordan type $\Jt(M,x)$ of $M$ relative to $x$}. We denote by
\[ \Jt(M)=\{\Jt(M,x) \ ; \ x \in V(\fg)\!\smallsetminus\!\{0\}\}\]
the finite set of Jordan types of $M$. Thus, the $U_0(\fg)$-module $M$ has constant Jordan type if and only if $|\Jt(M)|=1$. In that case, we write
\[ \Jt(M)= \bigoplus_{i=1}^pa_i[i].\]

\bigskip

\begin{Remark} Let $M$ be an equal images module of constant rank $\rk(M)\ne 0$. Then $M^\ast$ is an equal kernels module of constant rank $\rk(M^\ast) = \rk(M)$. By the above, we have $\deg(M)=0\ne \deg(M^\ast)$, while $\Jt(M)=\Jt(M^\ast)$. Thus, $\deg(M)$ discerns properties of modules that cannot be detected by means of their Jordan types. \end{Remark} 

\bigskip
\noindent
In the sequel, we shall be concerned with self-dual modules, that is, $U_0(\fg)$-modules $M$ satisfying $M\cong M^\ast$. Such a module $M$ can be characterized via the existence of a non-
degenerate invariant bilinear form $(\, , \,) : M\!\times\!M \lra k$.

Let $\eta$ denote the {\it antipode} of the Hopf algebra $U_0(\fg)$, that is, the unique anti-automorphism of $U_0(\fg)$ such that $\eta(x)=-x$ for all $x \in \fg$. A bilinear form $(\, , \,) : M\!\times\!M \lra k$
is referred to as {\it invariant}, if
\[ (a.m,m') = (m,\eta(a).m') \ \ \ \ \ \ \ \ \ \forall \ a \in U_0(\fg), \, m,m' \in M.\]
We denote by ${}^\perp V$ and $V^\perp$ the left and right perpendicular spaces of a subspace $V\subseteq M$. If the form $(\, , \,)$ is non-degenerate, then $\dim_kM=\dim_kV\!+\!\dim_kV^\perp =
\dim_kV\!+\!\dim_k{}^\perp V$, so that ${}^\perp(V^\perp)= V = ({}^\perp V)^\perp$.

\bigskip
\begin{Example} By general theory, the Hopf algebra $U_0(\fg)$ is a Frobenius algebra. Accordingly, $U_0(\fg)$ possesses a non-degenerate bilinear form $\langle\, , \, \rangle : 
U_0(\fg)\!\times\!U_0(\fg)\lra k$ such that
\[ \langle ab, c\rangle = \langle a,bc\rangle \ \ \ \ \ \ \ \ \ \ \forall \ a,b,c \in U_0(\fg).\]
Hence the form $(\, , \,) : U_0(\fg)\!\times\!U_0(\fg) \lra k$, given by
\[ (a,b):= \langle \eta(a),b\rangle\]
for all $a,b \in U_0(\fg)$, is a non-degenerate invariant form of the regular module $U_0(\fg)$. \end{Example}
   
\bigskip

\begin{Thm} \label{SD1} Suppose that $\EE(2,\fg)\ne \emptyset$. If $M$ is a self-dual $U_0(\fg)$-module, then the following statements hold:
\begin{enumerate}
\item If $M$ has constant $j$-rank, then $\rk^j(M)\equiv 0 \modd(2)$, whenever $j \equiv 1\modd(2)$.
\item If $M$ has constant Jordan type $\Jt(M)=\bigoplus_{i=1}^pa_i[i]$, then $a_i\equiv 0 \ \modd(2)$ whenever $i\equiv 0 \ \modd(2)$. \end{enumerate}\end{Thm}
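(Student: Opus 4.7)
The plan is to reduce both claims to a two-dimensional elementary abelian subalgebra $\fe \in \EE(2,\fg)$, where the nullcone $V(\fe) = \fe$ is a linear subspace and the rank-degree formula of Theorem \ref{RDF2} becomes available. For part (1), I first pick any $\fe \in \EE(2,\fg)$. Since $\fe\setminus\{0\} \subseteq V(\fg)\setminus\{0\}$, the constant $j$-rank hypothesis on $M$ descends to the restriction: $M|_\fe$ is a $U_0(\fe)$-module of constant $j$-rank with $\rk^j(M|_\fe)=\rk^j(M)$. Self-duality descends as well, since linear duality commutes with restriction along $U_0(\fe)\hookrightarrow U_0(\fg)$, so $(M|_\fe)^\ast \cong M|_\fe$ as $U_0(\fe)$-modules.

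Next I would apply Theorem \ref{RDF2} to $M|_\fe$ to obtain
\[ \deg^j(M|_\fe)\!+\!\deg^j((M|_\fe)^\ast) = j\rk^j(M|_\fe),\]
and self-duality collapses the left-hand side to $2\deg^j(M|_\fe)$. Hence $2\deg^j(M|_\fe) = j\rk^j(M)$. When $j$ is odd, this forces $\rk^j(M)\equiv 0 \modd(2)$, proving (1).

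For part (2), I use the standard reconstruction
\[ a_i = \rk^{i-1}(M)\!-\!2\rk^i(M)\!+\!\rk^{i+1}(M) \qquad (1\le i\le p),\]
with the conventions $\rk^0(M)=\dim_k M$ and $\rk^j(M)=0$ for $j\ge p$. This follows at once from $\rk^j(M) = \sum_{\ell>j}(\ell\!-\!j)a_\ell$ by taking second differences. For $i$ even, both indices $i\!-\!1$ and $i\!+\!1$ are odd, so part (1) together with the trivial vanishing of ranks beyond $p\!-\!1$ shows each of $\rk^{i-1}(M)$ and $\rk^{i+1}(M)$ is even; thus $a_i \equiv 0 \modd(2)$.

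The real work is concentrated in part (1), and the main obstacle is not conceptual but a matter of careful bookkeeping: verifying that constant $j$-rank, the numerical value of $\rk^j$, and self-duality all transfer faithfully from $\fg$ to $\fe$. Once that is checked, Theorem \ref{RDF2} does the job, and part (2) reduces to the linear-algebra inversion above, with only minor care needed at the boundary cases $i=p\!-\!1$ (for $p$ odd) and $i=p=2$ to accommodate the convention $\rk^p(M)=0$.
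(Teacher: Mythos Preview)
Your proof is correct and follows essentially the same approach as the paper: restrict to some $\fe \in \EE(2,\fg)$, observe that constant $j$-rank, the value of $\rk^j$, and self-duality all pass to $M|_\fe$, apply Theorem~\ref{RDF2} to get $2\deg^j(M|_\fe)=j\rk^j(M)$, and then recover the $a_i$ via the second-difference formula $a_i=\rk^{i-1}-2\rk^i+\rk^{i+1}$. Your treatment is in fact slightly more explicit than the paper's about the bookkeeping and the boundary conventions at $i=p\!-\!1$ and $i=p$.
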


\begin{proof} Let $\fe \in \EE(2,\fg)$. We consider the self-dual $U_0(\fe)$-module $N:=M|_\fe$.

(1) Since $N\cong N^\ast$, Theorem \ref{RDF2} implies $2\deg^j(N)=j\rk^j(N)=j\rk^j(M)$.

(2) Note that $N$ has constant Jordan type $\Jt(N)=\Jt(M)$. Setting $\rk^0(N)=\dim_kN$ and $\rk^p(N)=0=\rk^{p+1}(N)$, we have $\rk^j(N)= \sum_{i=j+1}^p a_i(i\!-\!j)$, so that
\[ \rk^{j-1}(N)\!-\!\rk^j(N) = \sum_{i\ge j}a_i.\]
As a result,
\[ a_j = \rk^{j-1}(N)\!-\!2\rk^j(N)\!+\!\rk^{j+1}(N).\]
Let $j \in \{1,\ldots,p\}$ be even. Then $j\!-\!1$ and $j\!+\!1$ are odd, and (1) shows that $\rk^{j-1}(N)$ and $\rk^{j+1}(N)$ are even. Consequently, $a_j$ is even.\end{proof} 

\bigskip

\begin{Remarks} (1) \ Let $i \in \{0,\ldots,p\!-\!1\}$. Since the simple $U_0(\fsl(2))$-module $L(i)$ is self-dual and of constant Jordan type $\Jt(L(i))=[i\!+\!1]$, Theorem \ref{SD1} may fail
if $\EE(2,\fg)=\emptyset$, even though $\PP(V(\fg))$ is isomorphic to a projective space. 

(2) \ The proof of Theorem \ref{SD1} shows that the conclusions are valid for those $U_0(\fg)$-modules whose restriction $M|_\fe$ is self-dual for some $\fe \in \EE(2,\fg)$. 

(3) \ Suppose that $p\ge 3$. The $U_0(\fe_2)$-module $H(\fe_2):= \Rad(U_0(\fe_2))/\Soc(U_0(\fe_2))$ is indecomposable, self-dual, and of constant Jordan type $\Jt(H(\fe_2)) = 2[p\!-\!1]
\!\oplus\!(p\!-\!2) \![p]$. Its second Heller shift $M:= \Omega^2_{U_0(\fe_2)}(H(\fe_2))$ has constant Jordan type $\Jt(M)= 2[p\!-\!1]\!\oplus\!n[p]$ for some $n \in \NN_0$ and thus also fulfills the 
conclusion of Theorem \ref{SD1}. The assumption $M \cong M^\ast$ implies, $\Omega^2_{U_0(\fe_2)}(H(\fe_2)) \cong \Omega^2_{U_0(\fe_2)}(H(\fe_2))^\ast \cong 
\Omega^{-2}_{U_0(\fe_2)}(H(\fe_2)^\ast) \cong \Omega^{-2}_{U_0(\fe_2)}(H(\fe_2))$, so that $\Omega^4_{U_0(\fe_2)}(H(\fe_2))\cong H(\fe_2)$. This shows that $H(\fe_2)$ is periodic, which 
contradicts $H(\fe_2)$ being a module of constant Jordan type. 

(4) \ Suppose that $V(\fg)$ is a subspace of $\fg$. If $M$ is self-dual and of constant $j$-rank, then Theorem \ref{RDF2} also implies $\deg^j(M)\equiv 0 \ \modd(j)$, whenever $j$ is odd. \end{Remarks}  

\bigskip

\begin{Examples} Suppose that $\fg$ is a Lie algebra of dimension $r$ such that $V(\fg)$ is a subspace of dimension $\ge 2$. We fix $j \in \{1,\ldots,n\}$.

(1) \ Since $U_0(\fg)$ is self-dual and of constant $j$-rank $p^{r-1}(p\!-\!j)$, Theorem \ref{RDF2} gives
\[ \deg^j(U_0(\fg))= \frac{j(p\!-\!j)}{2}p^{r-1}.\]

(2) \ {\it We have 
\[ \deg^j(M) = \deg^j(U_0(\fg))\!-\!j \ \ \text{for} \ \ M \in \{\Rad(U_0(\fg)), \Rad(U_0(\fg))/\Soc(U_0(\fg))\}.\] }
In view of $\Rad(U_0(\fg))\cong \Rad(U_0(\fg)^\ast) \cong (U_0(\fg)/\Soc(U_0(\fg)))^\ast$, Theorem \ref{RDF2} and Lemma \ref{DGP4}(3) yield
\begin{eqnarray*} 
\deg^j(\Rad(U_0(\fg))) &= & j\rk^j(U_0(\fg)/\Soc(U_0(\fg)))\!-\!\deg^j(U_0(\fg)/\Soc(U_0(\fg)))\\ 
& = & j\rk^j(U_0(\fg))\!-\!j-\!\deg^j(U_0(\fg))= \deg^j(U_0(\fg))\!-\!j.
\end{eqnarray*}
By the same token, we have $\deg^j(\Rad(U_0(\fg))/\Soc(U_0(\fg)))=\deg^j(\Rad(U_0(\fg)))$.

(3) \ {\it Suppose that $\EE(2,\fg)\ne \emptyset$. Then we have
\[ j\rk(P) = 2\deg^j(P)\]
for every projective $U_0(\fg)$-module $P$}.

Let $\fe \in \EE(2,\fg)$. For a projective $U_0(\fg)$-module $P$, there exists $\ell \in \NN_0$ such that $P|_{\fe} \cong \ell U_0(\fe)$. Since $P$ has constant $j$-rank, Lemma \ref{DGP4}(2), 
Theorem \ref{RDF2} and Theorem \ref{DGP2} imply
\[ j\rk^j(P) =j \ell \rk^j(U_0(\fe)) = 2\ell \deg^j(U_0(\fe)) = 2\deg^j(P).\]

(4) \ We consider the $U_0(\fe_2)$-modules $M_n := U_0(\fe_2)/\Rad^n(U_0(\fe_2))$ for $1\!\le\! n\! \le\! 2p\!-\!2$. Since each module $M_n$ is also a $\GL_2(k)$-module, and $\GL_2(k)$ acts on 
$\PP(\fe)$ with one orbit, it has constant $j$-rank.

If $n\! \le\! p\!-\!1$, then $M_n$ is an equal kernels module, so that Corollary \ref{RDF3} yields 
\[ \deg^j(M_n)=j\rk^j(M_n)= \left\{ \begin{array}{cl} 0 & n\le j \\ j\frac{(n-j)(n-j+1)}{2} & n \ge j\!+\!1.\end{array}\right.\] 

Let $n\!\ge\! p$. Since $\Rad^{p-1}(U_0(\fe_2))$ has the equal images property, it follows that $\Rad^{p-1+j}(U_0(\fe_2))= \im x^j_{\Rad^{p-1}(U_0(\fe_2))}$ for all $x \in \fe_2\! \smallsetminus\!\{0\}$, so 
that $\Rad^n(U_0(\fe_2)) \subseteq \bigcap_{x \in \fe_2\smallsetminus\{0\}}\im x_{U_0(\fe_2)}^j$ for $n\! \ge\! p\!-\!1\!+\!j$. Consequently, Lemma \ref{DGP4}(3) implies 
\[\deg^j(M_n)= \deg^j(U_0(\fe_2))= \frac{pj(p\!-\!j)}{2}\] 
for $n\! \ge\! p\!-\!1\!+\!j$. \end{Examples}

\bigskip

\begin{Cor} \label{SD2} Let $\fg$ be a restricted Lie algebra such that $V(\fg)$ is a subspace of dimension $\ge 2$. If $(0) \lra N \lra E \lra M \lra (0)$ is a short exact sequence of
$U_0(\fg)$-modules of constant $j$-rank with $\rk^j(E)=\rk^j(M)\!+\!\rk^j(N)$, then $\deg^j(E)=\deg^j(M)\!+\!\deg^j(N)$. \end{Cor}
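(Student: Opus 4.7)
The plan is to obtain the reverse inequality to Lemma \ref{DGP4}(1) by dualizing the exact sequence and applying the rank-degree formula (Theorem \ref{RDF2}). Lemma \ref{DGP4}(1) already yields
\[ \deg^j(E) \ge \deg^j(M)+\deg^j(N),\]
so it suffices to establish the opposite inequality.

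First I would dualize the sequence to obtain the exact sequence
\[ (0) \lra M^\ast \lra E^\ast \lra N^\ast \lra (0).\]
Since the rank of a linear map coincides with the rank of its transpose, each of $M^\ast, N^\ast, E^\ast$ has constant $j$-rank equal to the corresponding $j$-rank of $M, N, E$. In particular, the additivity hypothesis is preserved:
\[ \rk^j(E^\ast)=\rk^j(E)=\rk^j(M)+\rk^j(N)=\rk^j(M^\ast)+\rk^j(N^\ast).\]
A second application of Lemma \ref{DGP4}(1) to the dual sequence thus provides
\[ \deg^j(E^\ast)\ge \deg^j(M^\ast)+\deg^j(N^\ast).\]

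Next I would invoke Theorem \ref{RDF2} to rewrite each dual degree, since all three modules have constant $j$-rank and $V(\fg)$ is a subspace of $\fg$. Substituting $\deg^j(X^\ast)=j\rk^j(X)-\deg^j(X)$ for $X \in \{E,M,N\}$ into the previous inequality gives
\[ j\rk^j(E)-\deg^j(E)\ge j(\rk^j(M)+\rk^j(N))-\deg^j(M)-\deg^j(N).\]
Using the additivity of $j$-ranks one more time, the terms $j\rk^j(E)$ on the two sides cancel, yielding
\[ \deg^j(E)\le \deg^j(M)+\deg^j(N),\]
which combined with Lemma \ref{DGP4}(1) completes the proof.

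I do not expect any serious obstacle here: the argument is a clean duality-symmetrization of Lemma \ref{DGP4}(1) using Theorem \ref{RDF2}. The only points requiring care are the verifications that (i) taking duals preserves both the constancy and the additivity of $j$-ranks, and (ii) the hypothesis $\dim V(\fg)\ge 2$ (inherited in the statement of Corollary \ref{SD2}) is what legitimizes the appeal to Theorem \ref{RDF2}; both checks are immediate.
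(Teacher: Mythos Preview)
Your proposal is correct and follows essentially the same argument as the paper: dualize the sequence, apply Lemma \ref{DGP4}(1) to the dual sequence, and then convert via Theorem \ref{RDF2} using the additivity of the $j$-ranks to obtain the reverse inequality. The paper's proof differs only in the order of exposition.
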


\begin{proof} Dualization provides a short exact sequence $(0) \lra M^\ast \lra E^\ast \lra N^\ast \lra (0)$ of $U_0(\fg)$-modules of constant $j$-rank such that $\rk^j(E^\ast)=\rk^j(M^\ast)\!+\!\rk^j(N^\ast)$.
Lemma \ref{DGP4}(1) yields $\deg^j(E^\ast)\ge \deg^j(M^\ast)\!+\!\deg^j(N^\ast)$, while repeated application of Theorem \ref{RDF2} gives 
\[ \deg^j(E) =  j\rk^j(E)\!-\!\deg^j(E^\ast) \le j\rk^j(M^\ast)\!+\!j\rk^j(N^\ast)\!-\!\deg^j(M^\ast)\!-\!\deg^j(N^\ast) = \deg^j(M)\!+\!\deg^j(N).\]
By applying Lemma \ref{DGP4}(1) to the original sequence, we obtain the reverse inequality. \end{proof}

\bigskip

\begin{Cor} \label{SD3} Let $\fg$ be a restricted Lie algebra such that $V(\fg)$ is a subspace of dimension $\ge 2$. If $(0) \lra N \lra E \lra M \lra (0)$ is a locally split short exact sequence of
$U_0(\fg)$-modules such that $M,N \in \EIP(\fg)$, then $E \in \EIP(\fg)$. \end{Cor}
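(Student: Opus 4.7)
The plan is to reduce the equal-images question to a degree computation, using the characterization (from the first Remark after the Definition of $\deg^j$) that for a module of constant Jordan type and nullcone a subspace, the equal images property is equivalent to vanishing of $\deg^j$ for all $j\in\{1,\ldots,p-1\}$.

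First, I would observe that since $M, N \in \EIP(\fg)$ we have $\im x^j_M = \im y^j_M$ and $\im x^j_N = \im y^j_N$ for all nonzero $x,y\in V(\fg)$ and all $j$, which forces $\rk(x^j_M)$ and $\rk(x^j_N)$ to be independent of $x$. Thus both $M$ and $N$ have constant $j$-rank for every $j\in\{1,\ldots,p-1\}$, i.e.\ they are of constant Jordan type, and the cited Remark gives
\[
\deg^j(M)=0=\deg^j(N)\qquad\text{for all }j\in\{1,\ldots,p-1\}.
\]

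Next I would use local splitness to control $E$. For every nonzero $x\in V(\fg)$ the sequence $(0)\to N|_{k[x]}\to E|_{k[x]}\to M|_{k[x]}\to(0)$ is split exact, so $E|_{k[x]}\cong M|_{k[x]}\oplus N|_{k[x]}$ as $k[x]$-modules. Consequently
\[
\rk(x^j_E)=\rk(x^j_M)+\rk(x^j_N)=\rk^j(M)+\rk^j(N),
\]
independently of $x$. Hence $E$ has constant $j$-rank with $\rk^j(E)=\rk^j(M)+\rk^j(N)$ for each $j$.

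With all three modules of constant $j$-rank and with rank additivity, Corollary \ref{SD2} applies and yields
\[
\deg^j(E)=\deg^j(M)+\deg^j(N)=0\qquad\text{for every }j\in\{1,\ldots,p-1\}.
\]
Since $E$ is of constant Jordan type, the same Remark used at the outset (in its converse direction) lets us conclude $E\in\EIP(\fg)$, finishing the proof.

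There is no real obstacle here: the argument is essentially a formal combination of the equivalence ``EIP $\Leftrightarrow$ all $\deg^j$ vanish'' with the additivity result of Corollary \ref{SD2}. The one point to verify carefully is that the hypothesis ``rank additivity and constant $j$-rank for all three terms'' of Corollary \ref{SD2} really does follow from local splitness together with $M,N\in\EIP(\fg)$, which is immediate once one restricts to each $k[x]$.
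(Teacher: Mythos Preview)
Your proof is correct and follows essentially the same approach as the paper's own proof: both use local splitness to verify that $E$ has constant $j$-rank with $\rk^j(E)=\rk^j(M)+\rk^j(N)$, invoke Corollary~\ref{SD2} to obtain $\deg^j(E)=\deg^j(M)+\deg^j(N)=0$, and then conclude $E\in\EIP(\fg)$ from the vanishing of all $\deg^j(E)$.
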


\begin{proof} Let $j \in \{1,\ldots,p\!-\!1\}$. The equal images modules $M$ and $N$ have constant $j$-rank, and since the sequence is locally split, the $U_0(\fg)$-module $E$ has constant
$j$-rank $\rk^j(E)=\rk^j(M)\!+\!\rk^j(N)$. Corollary \ref{SD2} now implies
\[ \deg^j(E)=\deg^j(M)\!+\!\deg^j(N) =0.\]
As a result, $E$ enjoys the equal images property. \end{proof} 

\bigskip

\section{Low Dimensional Modules of Constant Rank}\label{S:CRLD}
In this section, we will show how Tango's Theorem \ref{Ta1} can be applied in order to obtain information concerning low-dimensional modules of constant rank for finite group schemes.
We begin by considering infinitesimal groups of height $1$. Throughout, $(\fg,[p])$ denotes a restricted Lie algebra. A $U_0(\fg)$-module $M$ such that $\fg.M=(0)$ will be referred to as being 
{\it trivial}. 

\bigskip

\begin{Lemma} \label{TrLA0} Let $\fg$ be a $p$-trivial restricted Lie algebra of dimension $\dim_k\fg \!\ge\! 2$. If $M \in \EIP(\fg)\cap\EKP(\fg)$, then $M$ is trivial. \end{Lemma}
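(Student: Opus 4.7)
My plan is to argue by contradiction using only the $j=1$ instances of EIP and EKP, reducing the problem to a fact about one-parameter pencils of endomorphisms over an algebraically closed field. Assume $M$ is non-trivial. Since $\fg$ is $p$-trivial we have $V(\fg)=\fg$, so EIP and EKP produce subspaces $I,K\subseteq M$, independent of the choice of $z\in\fg\smallsetminus\{0\}$, with
\[ \im z_M = I \qquad \text{and} \qquad \ker z_M = K \qquad \forall\, z \in \fg\smallsetminus\{0\}.\]
Non-triviality forces $I\ne(0)$. Since $\dim_kM=\dim_kI+\dim_kK$, for every such $z$ the induced map $\bar z: M/K \lra I$ is a $k$-linear isomorphism.

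Next I will exploit $\dim_k\fg\ge 2$ by choosing linearly independent $x,y\in\fg$. Setting
\[ T := \bar y \circ \bar x^{-1} \in \End_k(I),\]
we see that $T$ is an automorphism of $I$ and $\bar y = T\circ\bar x$. For arbitrary $(\alpha,\beta)\in k^2\smallsetminus\{0\}$ the element $\alpha x+\beta y\in\fg$ is non-zero, so the isomorphism criterion above, applied to $z=\alpha x+\beta y$, will yield that
\[ \overline{(\alpha x+\beta y)_M} = \alpha\bar x+\beta\bar y = (\alpha\,\id_I+\beta T)\circ \bar x : M/K \lra I\]
is an isomorphism. Consequently $\alpha\,\id_I+\beta T\in\End_k(I)$ is invertible on $I$ for every $(\alpha,\beta)\ne (0,0)$.

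The finish uses algebraic closedness of $k$: since $I$ is non-zero and finite-dimensional, the automorphism $T$ admits an eigenvalue $\lambda\in k$; taking $(\alpha,\beta):=(-\lambda,1)$ then forces $T-\lambda\,\id_I$ to be invertible on $I$, a contradiction. Hence $I=(0)$ and $M$ is trivial.

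There is no substantive obstacle in this plan; the only point requiring care is recognizing that EIP$\cap$EKP with $j=1$ simultaneously pins down both the image \emph{and} the kernel of $z_M$ for every non-zero $z\in V(\fg)=\fg$, so that the pencil $\{\alpha x+\beta y\}_{(\alpha,\beta)\ne 0}$ genuinely gives a family of operators on $M$ with fixed image $I$ and fixed kernel $K$. Notably, neither the $p$-nilpotency beyond $V(\fg)=\fg$ nor the $j\ge 2$ parts of EIP and EKP play any role.
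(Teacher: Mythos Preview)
Your argument is correct and considerably more elementary than the paper's. The paper proceeds structurally: assuming the constant Jordan type of $M$ has a block of size $d\ge 2$, it passes to an elementary subalgebra $\fe\in\EE(2,\fg)$ (available because a nonzero $p$-trivial Lie algebra has nontrivial center), and then invokes the classification results \cite[(1.9),(4.1)]{CFS} on $\EIP\cap\EKP$-modules for $\ZZ/(p)\!\times\!\ZZ/(p)$ to reach a contradiction. Your route avoids all of this external input: using only the $j=1$ instances of EIP and EKP, you reduce to the linear-algebraic fact that a pencil $\alpha\,\id_I+\beta T$ over an algebraically closed field cannot consist entirely of invertible maps once $I\ne(0)$. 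This is shorter, self-contained, and in fact proves a slightly stronger statement (only the equal $1$-images and equal $1$-kernels properties are needed, and one only needs two linearly independent elements of $V(\fg)$ whose span lies in $V(\fg)$, not full $p$-triviality). What the paper's approach buys instead is contact with the finer Jordan-type and $W_{n,d}$ structure of \cite{CFS}, which is thematically consistent with the rest of the article but not logically required here.
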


\begin{proof} We assume $M\ne (0)$, so that $M$ has constant Jordan type
\[\Jt(M) = \bigoplus_{i=1}^d a_i[i],\]
with $1\!\le\!d\!\le\!p$ and $a_d \ne 0$. The nilpotent Lie algebra $\fg$ has a nontrivial center $C(\fg)$. As $\dim_k\fg\!\ge\!2$, we can thus find $\fe \in \EE(2,\fg)$. Since $U_0(\fe) \cong k(\ZZ/(p)\!\times\!
\ZZ/(p))$, the results of \cite{CFS} may be applied to the $U_0(\fe)$-module $N:=M|_\fe$, which belongs to $\EIP(\fe)\cap\EKP(\fe)$ and has constant Jordan type $\Jt(N)=\Jt(M)$.

Suppose that $d\!\ge\!2$. Being a submodule of an equal kernels module, it follows that $N':=\Rad^{d-2}(N) \in \EIP(\fe)$ is an equal kernels module, cf.\ \cite[(1.9)]{CFS}. According to
\cite[(4.1)]{CFS}, there exists a decomposition
\[ N' \cong \bigoplus_{i=1}^\ell W_{n_i,2},\]
with all constituents having the equal kernels property. Since $d\!\ge\!2$ and $\Jt(W_{n_i,2})=(n_i\!-\!1)[2]\!\oplus\![1]$, there is $i \in \{1,\ldots,\ell\}$ such that $n_i\ge 2$. Writing $U_0(\fe)=k[x,y]$ with 
$x^p=0=y^p$, we observe $\ker x_{W_{n_i,2}} \ne \ker y_{W_{n_i,2}}$, a contradiction. Thus, $d=1$, whence $\Jt(M)=(\dim_kM)[1]$. As a result, $M$ is a trivial $U_0(\fg)$-module. \end{proof}

\bigskip
\noindent
To illustrate our geometric methods, we begin with the following elementary observation. Suppose that $\fg$ is an $r$-dimensional $p$-trivial restricted Lie algebra, and let $M$ be a non-trivial $U_0(\fg)
$-module of constant rank. If $\varrho_M : U_0(\fg) \lra \End_k(M)$ is the representation afforded by $M$, then $M$ being non-trivial implies $\rk(M)\ne 0$, so that $\varrho_M|_{\fg} : \fg \lra \End_k(M)$ 
is injective. Engel's Theorem guarantees that $\im \varrho_M$ may be embedded into the space of strictly upper triangular matrices of size $\dim_kM$, so that $r \le \binom{\dim_kM}{2}$. Hence every 
$U_0(\fg)$-module of constant rank of dimension $\dim_kM\!\le \sqrt{2r}\!+\!\frac{1}{2}$ is trivial. Tango's Theorem allows us to relax this condition. 

\bigskip

\begin{Theorem} \label{TrLA1} Suppose that $\fg$ is $p$-trivial and let $M$ be a $U_0(\fg)$-module of constant rank.
\begin{enumerate}
\item If $\dim_k\Rad(M) \le \dim_k\fg\!-\!1$, then $M$ has the equal images property.
\item If $\dim_kM/\Soc(M) \le \dim_k\fg\!-\!1$, then $M$ has the equal kernels property.
\item If $\max\{\dim_k\Rad(M),\dim_kM/\Soc(M)\} \le \dim_k\fg\!-\!1$, then $M$ is trivial. 
\item If $\dim_kM \le \dim_k\fg$, then $M$ is trivial. \end{enumerate}\end{Theorem}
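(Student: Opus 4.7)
The plan is to dispatch (1) and (2) by a single application of Tango's Theorem \ref{Ta1}(1), deduce (3) from Lemma \ref{TrLA0}, and finally reduce (4) to (3).

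For (1), write $r := \dim_k \fg$ and $d := \rk(M)$. Since $\fg$ is $p$-trivial it is nilpotent by Engel's theorem, so $U_0(\fg)$ is local and $\fg \subseteq \Rad(U_0(\fg))$. Hence $\im x_M \subseteq \Rad M$ for every $x \in \fg$, which means the morphism $\msim_M : \PP(\fg) \cong \PP^{r-1} \to \Gr_d(M)$ furnished by Corollary \ref{Mo2} factors through the closed immersion $\Gr_d(\Rad M) \hookrightarrow \Gr_d(M)$. The hypothesis $\dim_k \Rad M \le r - 1$ puts this factored morphism $\PP^{r-1} \to \Gr_d(\Rad M)$ exactly into the scope of Tango's Theorem \ref{Ta1}(1) (with $m = r - 1$ and $n \le m$), so it must be constant. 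Corollary \ref{LASN3}(2) then promotes constancy of $\msim_M$ to $M \in \EIP(\fg)$. Part (2) is obtained by applying (1) to $M^*$: one has $\rk(M^*) = \rk(M)$ (ranks of transposes agree), and the standard Frobenius-algebra identity $\dim_k \Rad(M^*) = \dim_k M/\Soc M$ turns the hypothesis of (2) into the hypothesis of (1) for $M^*$. Hence $M^* \in \EIP(\fg)$, which via the antipode-twisted duality $\ker x_M^j = {}^{\perp}(\im x_{M^*}^j)$ is equivalent to $M \in \EKP(\fg)$.

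For (3), combining (1) and (2) gives $M \in \EIP(\fg) \cap \EKP(\fg)$, and Lemma \ref{TrLA0} forces $M$ to be trivial as soon as $\dim_k \fg \ge 2$; the corner cases $\dim_k \fg \le 1$ are immediate, since then the hypothesis already forces $\Rad M = 0$ and hence $M$ is semisimple over a local algebra, hence trivial. For (4), suppose $M \ne 0$. Locality of $U_0(\fg)$ gives $\Rad M \subsetneq M$ and $\Soc M \ne 0$, so that
\[ \max\{\dim_k \Rad M,\ \dim_k M/\Soc M\} \le \dim_k M - 1 \le r - 1;\]
part (3) then forces $M$ to be trivial.

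The main obstacle is recognizing that the Grassmannian target in Corollary \ref{Mo2} may be shrunk from $\Gr_d(M)$ to $\Gr_d(\Rad M)$, so that Tango's theorem applies under the much weaker bound $\dim \Rad M \le r-1$ rather than $\dim M \le r-1$; this asymmetry is precisely what gives parts (1)--(3) their sharper form and makes (4) a corollary rather than the primary statement. Once this factorization is in place, translating Tango's constancy statement through Corollary \ref{LASN3}, through the antipode duality, and through Lemma \ref{TrLA0} is routine bookkeeping.
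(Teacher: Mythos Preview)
Your proof is correct and follows essentially the same route as the paper: factor $\msim_M$ through $\Gr_d(\Rad M)$, apply Tango's Theorem \ref{Ta1}(1), upgrade via Corollary \ref{LASN3}, dualize for (2), and combine with Lemma \ref{TrLA0} for (3) and (4). Your treatment is in fact slightly more careful than the paper's in that you explicitly handle the corner case $\dim_k\fg\le 1$ in part (3), which the paper leaves implicit.
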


\begin{proof} We put $r:= \dim_k\fg$. Since $\fg$ is $p$-trivial, the algebra $U_0(\fg)$ is local and $\fg \subseteq \Rad(U_0(\fg))$. It follows that $\im x_M \subseteq \Rad(M)$ for every
$x \in \fg$.

(1) Suppose that $\dim_k\Rad(M) \le r\!-\!1$ and put $d:= \rk(M)$. Corollary \ref{RLA1} ensures that
\[ \msim_M : \PP^{r-1} \lra \Gr_d(M) \ \ ; \ \ [x] \mapsto \im x_M\]
is a morphism of projective varieties. Since $\msim_M(x) \subseteq \Rad(M)$ for all $x \in \PP^{r-1}$, the morphism $\msim_M$ factors through $\Gr_d(\Rad(M)) \subseteq \Gr_d(M)$. 
As $\dim_k\Rad(M)\le r\!-\!1$, we may invoke Theorem \ref{Ta1}(1) to see that the map $\msim_M$ is constant. Corollary \ref{LASN3} now shows that the $U_0(\fg)$-module $M$ has the equal images 
property. 

(2) We consider the dual $U_0(\fg)$-module $M^\ast$, so that $x_{M^\ast}=-(x_M)^{\tr}$ for all $x \in \fg$. Consequently, 
\[ (\ast) \ \ \ \ \ \ker x_{M^\ast} = \{\lambda \in M^\ast \ ; \ \lambda\circ x_M = 0\},\]
showing that $M^\ast$ also has constant rank $\rk(M^\ast)=\rk(M)$. 

Recall the canonical pairing $M^\ast \times M \lra k \ \ ; \ \ (f,m) \mapsto f(m)$. Since $(x.f)(\Soc(M)) =(0)$ for every $x \in \Rad(U_0(\fg))$, we have $\Rad(M^\ast) \subseteq {}^\perp\Soc(M)$. Let $f \in 
{}^\perp\Soc(M)$. If $U \subseteq M^\ast$ is a maximal submodule, then $U^\perp \subseteq M$ is simple, so that $f(U^\perp)=(0)$ and $f \in {}^\perp (U^\perp)= U$. This shows that $f \in \Rad(M^\ast)$. 
Consequently, $\Rad(M^\ast) = {}^\perp\Soc(M) \cong (M/\Soc(M))^\ast$.      

Since $\dim_k\Rad(M^\ast)=\dim_kM/\Soc(M) \le r\!-\!1$, part (1) implies that $M^\ast$ has the equal images property. In view of ($\ast$), the module $M$ has the equal kernels property. 

(3) Owing to (1) and (2), the $U_0(\fg)$-module $M$ belongs to $\EIP(\fg)\cap \EKP(\fg)$. Now Lemma \ref{TrLA0} forces $M$ to be trivial. 

(4) Since $\dim_kM \le r$, the condition of (3) is fulfilled. \end{proof}

\bigskip

\begin{Example} The bound of Theorem \ref{TrLA1}(4) cannot be improved: Let $V_{r+1} = \bigoplus_{i=1}^{r+1}kv_i$ be the $(r\!+\!1)$-dimensional vector space on which the elementary Lie 
algebra $\fe_r := \bigoplus_{i=1}^rkx_i$ acts via
\[ x_i\dact v_j = \delta_{i,j} v_{r+1} \ \ \ \text{for} \ 1\le i\le r \ \text{and} \ 1\le j\le r\!+\!1.\]
This $U_0(\fe_r)$-module has constant rank $1$. Note that $V_{r+1}$ is isomorphic to $\Soc_2(U_0(\fe_r))$.\end{Example} 

\bigskip
\noindent
For non-abelian $p$-trivial Lie algebras we have the following sharpening:

\bigskip  

\begin{Corollary} \label{TrLA2} Let $\fg$ be a non-abelian $p$-trivial Lie algebra. If $M$ is a non-trivial $U_0(\fg)$-module of constant rank, then $\min\{\dim_k\Rad(M),\dim_kM/\Soc(M)\} \ge \dim_k\fg$. 
\end{Corollary}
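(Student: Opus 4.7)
The plan is to argue by contraposition. Suppose $M$ is a $U_0(\fg)$-module of constant rank for which $\min\{\dim_k\Rad(M), \dim_k M/\Soc(M)\} \le \dim_k\fg - 1$; the goal is to show that $M$ must then be trivial. At least one of the two inequalities $\dim_k\Rad(M) \le \dim_k\fg - 1$ or $\dim_k M/\Soc(M) \le \dim_k\fg - 1$ necessarily holds, and I would handle these two cases in parallel, reducing the second to the first by duality.

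In the first case, Theorem \ref{TrLA1}(1) immediately yields $M \in \EIP(\fg)$; in particular, the morphism $\msim_M : \PP(\fg) \lra \Gr_d(M)$ is constant (where $d=\rk(M)$). Since $\fg$ is $p$-trivial and non-abelian by hypothesis, Corollary \ref{LASN3}(3) forces $M \cong k^{\dim_k M}$, so $M$ is a trivial $U_0(\fg)$-module, as desired.

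The second case reduces to the first via dualization. Theorem \ref{TrLA1}(2) gives $M \in \EKP(\fg)$. Using the identity $\ker x_{M^\ast} = \{\lambda \in M^\ast \mid \lambda\circ x_M = 0\}$ recorded in the proof of Theorem \ref{TrLA1}(2), taking perpendicular spaces converts the equal kernels property of $M$ into the equal images property of $M^\ast$. Since $M^\ast$ has the same constant rank as $M$ and is trivial precisely when $M$ is trivial, applying the first case to $M^\ast$ shows $M^\ast$ is trivial, whence so is $M$.

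There is essentially no serious obstacle: the corollary is a clean amplification of Theorem \ref{TrLA1} that exploits the non-abelian hypothesis through Corollary \ref{LASN3}(3), allowing the conjunction in Theorem \ref{TrLA1}(3) to be replaced by a disjunction. The only routine point to verify is the equivalence, via orthogonal complements, between equal kernels for $M$ and equal images for $M^\ast$, but this is immediate from the identity $x_{M^\ast} = -(x_M)^{\tr}$ already used in the paper.
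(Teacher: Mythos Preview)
Your proof is correct and follows essentially the same approach as the paper: argue by contraposition, use Theorem~\ref{TrLA1}(1) together with Corollary~\ref{LASN3}(3) for the first case, and reduce the second case to the first by passing to $M^\ast$. The only cosmetic difference is that in the second case you route through $M\in\EKP(\fg)$ via Theorem~\ref{TrLA1}(2) before dualizing, whereas the paper applies the first case directly to $M^\ast$ (using $\dim_k\Rad(M^\ast)=\dim_kM/\Soc(M)$); these are the same argument unpacked differently.
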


\begin{proof} If $\dim_k\Rad(M) \le \dim_k\fg\!-\!1$, then Theorem \ref{TrLA1}(1) and Corollary \ref{LASN3} imply that $M$ is trivial, a contradiction. The assumption $\dim_kM/\Soc(M) \le \dim_k\fg\!-\!1$ 
yields the same conclusion for $M^\ast$ and hence also for $M$. \end{proof}

\bigskip

\begin{Corollary} \label{TrLA3} Let $\fg$ be $p$-trivial. If $M$ is a $U_0(\fg)$-module of constant rank and of dimension $\dim_k\fg\!+\!1$, then one of the following cases occurs:
\begin{enumerate}
\item[(a)] $M \cong k^{\dim_k\fg+1}$, or
\item[(b)] $\fg$ is abelian and $M \cong \Soc_2(U_0(\fg)), \, U_0(\fg)/\Rad^2(U_0(\fg))$.\end{enumerate} \end{Corollary}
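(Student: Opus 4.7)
The plan is to reduce to modules of Loewy length $2$ via the submodule $\Soc(M)$ and then identify $M$ up to isomorphism by an explicit structural analysis. If $\fg\cdot M=0$, then $M\cong k^{r+1}$ with $r:=\dim_k\fg$ and case (a) holds; so I assume $M$ is nontrivial and put $d:=\rk(M)\ge 1$. Since $\Soc(M)\subseteq\ker(x_M)$ for every $x\in\fg$, the quotient $M/\Soc(M)$ inherits constant rank $d$, with $\dim_k M/\Soc(M)=r+1-\dim_k\Soc(M)$. If $\dim_k\Soc(M)=1$, then $\dim_k M/\Soc(M)=r$, and Theorem \ref{TrLA1}(4) forces $M/\Soc(M)$ to be trivial, giving $\Rad(M)\subseteq\Soc(M)$; nontriviality of $M$ then forces $\Rad(M)=\Soc(M)$ of dimension $1$, so $M$ has Loewy length $2$ with $M/\Rad(M)\cong k^r$. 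The dual argument shows that if $\dim_k M/\Rad(M)=1$ then $\Rad(M)=\Soc(M)$ has dimension $r$ and $M$ again has Loewy length $2$. By Theorem \ref{TrLA1}(3), a nontrivial $M$ cannot have both $\dim_k\Rad(M)\le r-1$ and $\dim_k M/\Soc(M)\le r-1$, so at least one of ``simple socle'' or ``simple top'' must hold; if both hold at once, the two Loewy-length-$2$ descriptions force $r=1$ and $M\cong k[x]/(x^2)$, which coincides with both of the modules listed in (b) for $r=1$. Otherwise, by duality between $M$ and $M^\ast$, I may restrict to the simple-socle case $\Soc(M)=\Rad(M)=ks$ with $M/\Rad(M)\cong k^r$.

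Now fix a basis $x_1,\ldots,x_r$ of $\fg$ and lifts $\tilde v_1,\ldots,\tilde v_r\in M$ of a basis of $M/\Rad(M)$, and set $x_i\dact\tilde v_j=c_{ij}s$. Since $\fg\cdot\Rad(M)=\fg\cdot\Soc(M)=0$, both $(x_ix_l)_M$ and $(x_lx_i)_M$ vanish on all of $M$, so $[x_i,x_l]$ acts as zero on $M$; combined with $d\ge 1$ and constant rank, this forces $[\fg,\fg]=0$, i.e.\ $\fg$ is abelian. Computing the matrix of $\sum\lambda_ix_i$ in the basis $(s,\tilde v_1,\ldots,\tilde v_r)$ shows that all its entries lie in the first row and are given by $(C^{\mathrm{tr}}\lambda)_j$, so the rank is $1$ when $C^{\mathrm{tr}}\lambda\ne 0$ and $0$ otherwise; constant rank together with nontriviality therefore force $d=1$ and $C\in\GL_r(k)$. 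After the change of basis $\tilde v'_j:=\sum_k(C^{-1})_{kj}\tilde v_k$ one has $x_i\dact\tilde v'_j=\delta_{ij}s$, which is exactly the action on $\Soc_2(U_0(\fg))$ with generators $s=x_1^{p-1}\cdots x_r^{p-1}$ and $\tilde v'_j=x_1^{p-1}\cdots x_j^{p-2}\cdots x_r^{p-1}$; hence $M\cong\Soc_2(U_0(\fg))$. The simple-top case then yields $M\cong U_0(\fg)/\Rad^2(U_0(\fg))$ via the dual isomorphism $\Soc_2(U_0(\fg))^\ast\cong U_0(\fg)/\Rad^2(U_0(\fg))$, a direct computation with the Frobenius pairing on the commutative algebra $U_0(\fe_r)$.

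The main obstacle is verifying that the simple-socle and simple-top cases do correspond to the two modules listed in (b): after the argument above, this reduces to (i) checking that the normalized presentation $c_{ij}=\delta_{ij}$ really is the presentation of $\Soc_2(U_0(\fg))$ coming from the basis $\{x_1^{p-1}\cdots x_r^{p-1}\}\cup\{x_1^{p-1}\cdots x_j^{p-2}\cdots x_r^{p-1}\}_{j=1}^r$, and (ii) the self-duality identification $\Soc_2(U_0(\fg))^\ast\cong U_0(\fg)/\Rad^2(U_0(\fg))$, which is a short calculation using the pairing $\langle a,b\rangle=$ (coefficient of $x_1^{p-1}\cdots x_r^{p-1}$ in $ab$) on the symmetric algebra $U_0(\fe_r)$; both verifications are routine once $\fg$ has been shown abelian.
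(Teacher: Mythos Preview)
Your argument has a genuine gap at the opening of the nontrivial case. From $\Soc(M)\subseteq\ker(x_M)$ you conclude that ``the quotient $M/\Soc(M)$ inherits constant rank $d$''. This is false: the inclusion $\Soc(M)\subseteq\ker(x_M)$ gives a factorization $M/\Soc(M)\to M$ of rank $d$, but the operator $x_{M/\Soc(M)}$ is the further composite with the projection $M\to M/\Soc(M)$, and its rank is $d-\dim_k(\im x_M\cap\Soc(M))$. Already for $M=\Soc_2(U_0(\fe_2))$ one has $d=1$ while $M/\Soc(M)\cong k^2$ has rank $0$. What you actually need in order to invoke Theorem~\ref{TrLA1}(4) is that $M/\Soc(M)$ has \emph{some} constant rank, i.e.\ that $\dim_k(\im x_M\cap\Soc(M))$ is independent of $x\in\fg\smallsetminus\{0\}$; you have not established this, and the same unjustified step recurs by duality in your simple-top case.

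The paper circumvents this by a different case split. It first treats $\dim_k\Rad(M)\le r-1$ directly via Theorem~\ref{TrLA1}(1) and Corollary~\ref{TrLA2}, obtaining that $\fg$ is abelian and $M\in\EIP(\fe_r)$, and then identifies $M$ with $\Soc_2(U_0(\fe_r))$ by embedding it into the injective hull $U_0(\fe_r)$. Only in the residual case $\dim_k\Rad(M)=r$ \emph{and} $\dim_k\Soc(M)=1$ does it pass to $M/\Soc(M)$, and there it first records $\Soc(M)\subseteq\im x_M$ for all $x\ne 0$, so that $M/\Soc(M)$ has constant rank $d-1$. Your simple-socle case covers both of these at once, without either of the supporting arguments. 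If you repair this---e.g.\ by separating off the subcase $\dim_k\Rad(M)\le r-1$ and invoking the equal images property there---your explicit second-paragraph computation (showing $[\fg,\fg]$ acts trivially, hence $\fg$ abelian, and then normalising the matrix $C$) is a valid and more hands-on alternative to the paper's injective-hull identification, and it avoids the appeal to \cite{CFS}.
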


\begin{proof} Let $r:= \dim_k\fg$, so that $\dim_k\Rad(M)\le r$. If $\rk(M)=0$, then $M \cong k^{r+1}$. We therefore assume that $\rk(M)\ne 0$.

Suppose that $\dim_k \Rad(M) \le r\!-\!1$. By Corollary \ref{TrLA2}, $\fg$ is abelian and hence isomorphic to $\fe_r$. Moreover, Theorem \ref{TrLA1} shows that $M \in \EIP(\fe_r)$. Thanks to \cite[(1.9)]
{CFS}, we have $\Rad(M) \in \EIP(\fe_r)$ and Theorem \ref{TrLA1} implies $\Rad^2(M)=(0)$, whence $\ell\ell(M)=2$. Since $M$ is not trivial, Theorem \ref{TrLA1} yields $\dim_k\Soc(M)=1$, so that 
there exists an embedding $\iota : M \hookrightarrow U_0(\fe_r)$. As $\ell\ell(M) = 2$, this map factors through $\Soc_2(U_0(\fe_r))$. Thus, $\im \iota = \Soc_2(U(\fe_r))$ for dimension reasons.

Since $M^\ast$ has constant rank $\rk(M^\ast)=\rk(M)$, the assumption $\dim_kM/\Soc(M) \le r\!-\!1$ implies $M \cong \Soc_2(U_0(\fe_r))^\ast \cong U_0(\fe_r)/\Rad^2(U_0(\fe_r))$. 

It thus remains to consider the case, where $\dim_k\Rad(M)=r$ and $\dim_k\Soc(M)=1$.  As $M$ has constant rank $\rk(M)\ne 0$, this implies $\Soc(M) \subseteq \im x_M$ for
all $x \in \fg\!\smallsetminus\!\{0\}$.  Consequently, the factor module $M/\Soc(M)$ has constant rank. By Theorem \ref{TrLA1}, this module is trivial, whence $\Rad(M) \subseteq \Soc(M)$. As a result, 
$r=1$, so that $\fg \cong \fe_1$ and $M \cong \Soc_2(U_0(\fe_1))$. \end{proof} 

\bigskip
\noindent
Let $\fg$ be $p$-trivial. By the foregoing result, $U_0(\fg)$-modules of constant rank of dimension $\dim_k\fg\!+\!1$ belong to $\EIP(\fg)\cup\EKP(\fg)$. The example of Section \ref{S:RDF} shows that 
for modules of dimension $\dim_k\fg\!+\!2$, this may not be the case. 

\bigskip 

\begin{Corollary} \label{MSD4} Suppose that $\fg$ is $p$-trivial and let $M$ be a $U_0(\fg)$-module of constant rank such that $[i]\oplus n[1] \in \Jt(M)$ for some $n\!\ge\! 1$ and $2\! \le\! i\! \le\! p$. Then 
we have $n\! \ge\! \dim_k\fg\!-\!i\!+\!1$. \end{Corollary}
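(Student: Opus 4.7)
The strategy is to reduce the statement to a direct application of Theorem~\ref{TrLA1}(4), which asserts that a $U_0(\fg)$-module of constant rank whose dimension is bounded by $\dim_k \fg$ must be trivial.

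First I would record that since every Jordan type of $M$ is a decomposition of the restriction $M|_{k[x]}$ for some $x \in V(\fg)\smallsetminus\{0\}$, the hypothesis $[i]\oplus n[1] \in \Jt(M)$ forces
\[ \dim_k M = i + n.\]
Next I would observe that a trivial $U_0(\fg)$-module $T$ (i.e.\ one with $\fg\dact T = (0)$) satisfies $x_T = 0$ for all $x \in \fg$, so its only Jordan type is $(\dim_k T)[1]$. Since the hypothesis $i \ge 2$ means $[i]\oplus n[1]$ contains a Jordan block of size at least $2$, the module $M$ cannot be trivial.

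The key step is now immediate: by Theorem~\ref{TrLA1}(4), every non-trivial $U_0(\fg)$-module of constant rank has dimension strictly larger than $\dim_k \fg$. Combining this with the identity above yields
\[ i + n = \dim_k M \ge \dim_k \fg + 1,\]
which rearranges to $n \ge \dim_k \fg - i + 1$, as desired.

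There is really no significant obstacle here; the whole content sits inside Theorem~\ref{TrLA1}(4). The only care required is to match the Jordan type hypothesis against the dichotomy ``trivial versus large'': one must observe that $(i,n)$ with $i \ge 2$ rules out the trivial case, so the dimension bound applies. The restriction $n \ge 1$ in the statement is in fact not needed for the argument, but including it ensures the resulting inequality is nonvacuous and consistent with the examples constructed earlier in Section~\ref{S:DM}.
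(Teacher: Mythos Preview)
Your proof is correct and matches the paper's argument essentially line for line: observe that $i\ge 2$ forces $M$ to be non-trivial, then apply Theorem~\ref{TrLA1}(4) to get $\dim_kM=n+i\ge\dim_k\fg+1$. Your side remark that the hypothesis $n\ge 1$ is not actually used in the argument is also accurate.
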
 

\begin{proof} Since $i\! \ge\! 2$, the $U_0(\fg)$-module $M$ is not trivial. Consequently, Theorem \ref{TrLA1} implies 
\[\dim_k\fg \!+\!1 \le \dim_kM = n\!+\!i,\]
as asserted.  \end{proof} 

\bigskip

\begin{Definition} Let $(\fg,[p])$ be a restricted Lie algebra. Then
\[ \prk(\fg) := \max\{\dim_k\fh \ ; \ \fh \subseteq \fg \ \text{$p$-trivial subalgebra}\}\]
is called the {\it $p$-trivial rank of $\fg$}. \end{Definition}

\bigskip
\noindent
Since $p$-trivial Lie algebras of dimension $>0$ have non-trivial centers, we see that $\EE(2,\fg) \ne \emptyset$ if and only if $\prk(\fg) \ge 2$.

Let $G$ be a reductive algebraic group. We denote by $h_G$ the {\it Coxeter number} of $G$. The dimension of any maximal torus $T \subseteq G$ is called the {\it rank} $\rk(G)$ of $G$.

\bigskip

\begin{Proposition} \label{TrLA4} Let $(\fg,[p])$ be a restricted Lie algebra, $M$ be a $U_0(\fg)$-module of constant rank.
\begin{enumerate}
\item If $\dim_kM\! \le\! \prk(\fg)$, then $\rk(M)=0$.
\item Suppose that $\fg = \Lie(G)$, where $G$ is reductive such that $p\ge h_G$. If $\dim_kM \le \frac{1}{2}(\dim G\!-\! \rk(G))$, then $M$ is a direct sum of one-dimensional modules. \end{enumerate} \end{Proposition}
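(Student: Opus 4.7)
For (1), my plan is to fix a $p$-trivial subalgebra $\fh \subseteq \fg$ of maximal dimension, so that $\dim_k \fh = \prk(\fg) \ge \dim_k M$. The restriction $M|_\fh$ is a $U_0(\fh)$-module of constant rank, and Theorem \ref{TrLA1}(4) applied to the $p$-trivial Lie algebra $\fh$ forces $M|_\fh$ to be trivial, i.e.\ $x_M = 0$ for every $x \in \fh$. Since $\fh \subseteq V(\fg)$, picking any nonzero $x \in \fh$ and using that $M$ has constant rank yields $\rk(M) = 0$. The edge case $\prk(\fg) = 0$ forces $V(\fg) = (0)$, and $\rk(M) = 0$ holds trivially.

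For (2), I first want to verify that $\prk(\fg) \ge \frac{1}{2}(\dim G - \rk(G))$. Fixing a Borel subalgebra $\fb = \fh \oplus \fn$ of $\fg$, the nilradical $\fn$ has dimension $|\Phi^+| = \frac{1}{2}(\dim G - \rk(G))$ and is nilpotent of class at most $h_G - 1$. Under the hypothesis $p \ge h_G$, Jacobson's formula (as applied in the Example of Section \ref{S:MRLA}), together with the vanishing of the $[p]$-map on root vectors, yields $V(\fn) = \fn$, so $\fn$ is $p$-trivial. Consequently $\prk(\fg) \ge \frac{1}{2}(\dim G - \rk(G)) \ge \dim_k M$, and part (1) gives $\rk(M) = 0$; equivalently, $x_M = 0$ for every $x \in V(\fg)$.

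To promote this to the full conclusion, I observe that every root vector $x_\alpha \in \fg_\alpha$ lies in $V(\fg)$, being contained in the nilradical of some Borel, so $(x_\alpha)_M = 0$ for each root $\alpha$. Taking commutators of these zero operators yields $(h_\alpha)_M = 0$ for every coroot $h_\alpha = [x_\alpha, x_{-\alpha}]$; since the simple coroots form a $\ZZ$-basis of the coroot lattice, they constitute a $k$-basis of the Cartan of $[\fg,\fg]$. Hence the entire derived subalgebra $[\fg,\fg]$ annihilates $M$, so $M$ is a module over the toral Lie algebra $\fg/[\fg,\fg] \cong \Lie Z(G)^\circ$. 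The restricted enveloping algebra of a toral Lie algebra is semisimple and commutative with only one-dimensional simple modules, and the desired decomposition of $M$ as a direct sum of one-dimensional $U_0(\fg)$-modules follows.

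The main obstacle is the passage from ``every element of $V(\fg)$ acts as zero'' to ``$[\fg,\fg]$ acts as zero'', which rests on the Chevalley-basis fact that the coroots span the semisimple Cartan integrally. The $p$-triviality of $\fn$ for $p \ge h_G$ is standard but should be recorded carefully; part (1) and the final toral-semisimplicity step are then routine.
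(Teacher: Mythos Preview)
Your proposal is correct and follows essentially the same approach as the paper. For part (2), the paper reaches the conclusion more directly: once the root vectors act trivially, the decomposition $\fg = \Lie(T)\oplus\bigoplus_{\alpha\in\Phi}\fg_\alpha$ immediately reduces $M$ to a module for the toral subalgebra $\Lie(T)$, making your coroot/$[\fg,\fg]$ detour unnecessary.
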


\begin{proof} (1) Let $\fh \subseteq \fg$ be a $p$-trivial subalgebra of dimension $\prk(\fg)$. Then $N:= M|_{\fh}$ is a $U_0(\fh)$-module of constant rank $\rk(N)=\rk(M)$. In view of Theorem \ref{TrLA1}(4), the module $N$ is trivial so that $\rk(N)=0$.

(2) Since $p\ge h_G$, the unipotent radical of a Borel subalgebra of $\fg$ is $p$-trivial, whence $\dim_kM\le \prk(\fg)$. Let $T \subseteq G$ be a maximal torus with root system $\Phi$. In view of (1), the 
elements of $\bigcup_{\alpha \in \Phi}\fg_\alpha \subseteq V(\fg)$ act trivially on $M$. Hence $\bigoplus_{\alpha \in \Phi}\fg_\alpha$ acts trivially on $M$, and our assertion follows from the decomposition 
$\fg = \Lie(T)\!\oplus\!\bigoplus_{\alpha \in \Phi}\fg_\alpha$. \end{proof} 

\bigskip

\section{Modules for finite group schemes} \label{S:FG}
We now turn to the general case concerning modules over a finite group scheme $\cG$. This requires the Friedlander-Pevtsova theory of $p$-points, set forth
in a series of articles, beginning with \cite{FPe05}. Let $\fA_p := k[T]/(T^p)$ be the truncated polynomial ring with canonical generator $t:= T\!+\!(T^p)$. For an algebra homomorphism $\alpha : 
\fA_p \lra k\cG$ we denote by $\alpha^\ast : \modd k\cG \lra \modd \fA_p$ the associated pull-back functor. We say that $\alpha$ is a {\it $p$-point}, provided
\begin{enumerate}
\item[(P1)] $\alpha$ is left flat, i.e.\ $\alpha^\ast(k\cG)$ is projective, and
\item[(P2)] there exists an abelian unipotent subgroup scheme $\cU \subseteq \cG$ such that $\im \alpha \subseteq k\cU$. \end{enumerate} 
The set of $p$-points of $\cG$ will be denoted $\Pt(\cG)$. Two $p$-points $\alpha, \beta$ are said to be {\it equivalent} ($\alpha\!\sim\!\beta$) if we have 
\[ \alpha^\ast(M) \ \text{is projective} \ \Leftrightarrow \ \beta^\ast(M) \ \text{is projective} \]
for every $M \in \modd \cG$. By results of \cite{FPe05}, the space $\Pp(\cG):= \Pt(\cG)/\!\!\sim$ of equivalence classes of $p$-points is a noetherian topological space. 

If $\cH \subseteq \cG$ is a subgroup of the finite algebraic group $\cG$, then the canonical inclusion induces a continuous map
\[ \iota_\ast : \Pp(\cH) \lra \Pp(\cG)\]
which usually is not injective.

\bigskip

\subsection{Modules defined via $p$-points}
Using $p$-points one can extend the concepts of constant rank modules and equal images modules to $\cG$-modules, cf.\ \cite{FPe10}. Let $M$ be a $\cG$-module. Given $j \in \{1,\ldots,p\!-\!1\}$, 
we let
\[ \rk^j(M) := \max\{ \rk(\alpha(t)^j_M) \ ; \ \alpha \in \Pt(\cG)\}\]
be the {\it generic $j$-rank} of $M$. We say that $M \in \modd \cG$ has {\it constant $j$-rank} if $\rk(\alpha(t)^j_M) = \rk^j(M)$ for all $\alpha \in \Pt(\cG)$. Modules of constant $1$-rank are
referred to as being of {\it constant rank}. The $\cG$-module $M$ has the {\it equal images property}, if, for every $\ell \in \{1,\ldots,p\!-\!1\}$, there is a subspace $V_\ell \subseteq M$
such that $\im \alpha(t)^\ell_M=V_\ell$ for all $\alpha \in \Pt(\cG)$. When dealing with infinitesimal groups of height $r$ we shall often identify $\fA_p$ with the subalgebra $k[u_{r-1}]$ of $k\GG_{a(r)}$.

The following result shows that in the context of infinitesimal groups our new definitions are compatible with the previous ones.

\bigskip

\begin{Lem} \label{FG1} Let $\cG$ be an infinitesimal group scheme of height $r$, $M$ be a $\cG$-module.
\begin{enumerate}
\item Let $j \in \{1,\ldots, p\!-\!1\}$. If $\rk(\alpha(u_{r-1})^j_M) = \rk(\beta(u_{r-1})^j_M)$ for all $\alpha,\beta \in V(\cG)\!\smallsetminus\!\{\varepsilon\}$, then $M$ has constant $j$-rank.
\item Suppose there exist subspaces $V_1,\ldots,V_{p-1} \subseteq M$ such that $\im \alpha(u_{r-1})^\ell_M = V_\ell$ for all  $\alpha \in V(\cG)\!\smallsetminus\!\{\varepsilon\}$ and
$\ell \in \{1,\ldots,p\!-\!1\}$. Then $M$ has the equal images property. \end{enumerate} \end{Lem}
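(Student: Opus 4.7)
Plan:

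My plan is to leverage the Friedlander--Pevtsova structure theory of $p$-points for infinitesimal group schemes: for $\cG$ of height $r$, equivalence classes of $p$-points are parameterized by $\PP(V(\cG))$ via $[\varphi]\mapsto[\alpha_\varphi]$, where $\alpha_\varphi(t):=\varphi(u_{r-1})$, and equivalent $p$-points yield isomorphic pullbacks on every $\cG$-module $M$ (hence identical Jordan types and the same rank of each power of the generator). Both parts then reduce to the hypothesis via this identification, although part (2) requires more care than part (1).

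For part (1), given $\pi\in\Pt(\cG)$, I would pick $\psi\in V(\cG)\setminus\{\varepsilon\}$ with $\pi\sim\alpha_\psi$; Jordan-type invariance then yields $\rk(\pi(t)^j_M)=\rk(\psi(u_{r-1})^j_M)$, and the hypothesis says this common value $d_j$ is independent of $\psi$. Since $\pi$ was arbitrary, $M$ has constant $j$-rank.

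For part (2), equivalence of $p$-points does not in general preserve the image $\im\pi(t)^\ell_M$ as a subspace of $M$, so I argue the containment $\im\pi(t)^\ell_M\subseteq V_\ell$ directly and then invoke (1) to upgrade it to equality by dimension. I would factor $\pi=\varphi\circ\beta$ with $\varphi\in V(\cG)\setminus\{\varepsilon\}$ and $\beta:\fA_p\to k\GG_{a(r)}$ a $p$-point, so that $\pi(t)^\ell=\varphi(\beta(t)^\ell)$. The crucial observation is that $V_\ell=\varphi(u_{r-1}^\ell)(M)$ is stable under the action of the commutative subalgebra $\varphi(k\GG_{a(r)})\subseteq k\cG$: for $z\in k\GG_{a(r)}$ and $m\in M$,
\[ \varphi(z)\cdot\varphi(u_{r-1}^\ell)(m)\;=\;\varphi(u_{r-1}^\ell)\cdot\varphi(z)(m)\;\in\;V_\ell. \]
Expanding $\beta(t)^\ell$ as a linear combination of monomials in $u_0,\ldots,u_{r-1}$ and reparameterizing each monomial via a suitable endomorphism $\chi\in\End(\GG_{a(r)})$ so that the composite $\varphi\circ\chi\in V(\cG)$ absorbs the monomial into a power of $(\varphi\circ\chi)(u_{r-1})$ modulo an invertible factor that stabilizes $V_\ell$ by the observation above, one obtains $\varphi(\beta(t)^\ell)M\subseteq V_\ell$ from the hypothesis.

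The main obstacle is the monomial-by-monomial reparameterization in (2): matching the $k^\times$-weights of the various $\varphi(u^{\underline n})$ to those occurring in $\psi(u_{r-1})^\ell$ requires careful use of the description of $\End(\GG_{a(r)})$ via additive Frobenius polynomials, together with the stability of $V_\ell$ under the resulting invertible factors in $\varphi(k\GG_{a(r)})$. Part (1) is comparatively clean because it uses only Jordan-type invariance under $p$-point equivalence, not the finer information about images as subspaces of $M$.
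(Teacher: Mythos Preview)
Your approach to part (1) is correct and essentially what the paper does (the paper simply cites \cite[(3.8)]{FPe10} together with the bijection $\Xi_\cG:\Proj(V(\cG))\to\Pp(\cG)$).

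For part (2), however, the factorization $\pi=\varphi\circ\beta$ with $\varphi\in V(\cG)\smallsetminus\{\varepsilon\}$ and $\beta\in\Pt(\GG_{a(r)})$ does not exist in general, and this is a genuine gap. Take $\cG=\GG_{a(1)}\!\times\!\GG_{a(1)}$ (so $r=1$), write $k\cG=k[x,y]$ with $x^p=y^p=0$, and let $\pi(t)=x+y^2$. Any $\varphi\in V(\cG)$ sends the generator $u_0$ to a primitive element, hence to some $ax+by\in\fe_2$, and then $\im\varphi=k[ax+by]$. But $x+y^2$ lies in no subalgebra of this form, so $\pi$ cannot be written as $\varphi\circ\beta$. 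Your monomial-by-monomial reparameterization via $\chi\in\End(\GG_{a(r)})$ never gets started, because there is no $\varphi$ to compose with.

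The paper's route is different and avoids this obstruction. One first uses property (P2) to pass to an abelian unipotent subgroup $\cU\subseteq\cG$, writes $k\cU\cong k[v_1,\ldots,v_s]$ with $v_i^{p^{n_i}}=0$, and observes two facts: the set $\{u\in k\cU : u^p=0\}$ is exactly the ideal $(v_1^{p^{n_1-1}},\ldots,v_s^{p^{n_s-1}})$, and this ideal is also generated by elements $\alpha_1(u_{r-1}),\ldots,\alpha_s(u_{r-1})$ coming from genuine one-parameter subgroups $\alpha_i\in V(\cU)$. Since $k\cU$ is commutative, $V_1=\im\alpha_i(u_{r-1})_M$ is a $k\cU$-submodule of $M$. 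Hence for any $\pi\in\Pt(\cU)$ one writes $\pi(t)=\sum_i f_i\,\alpha_i(u_{r-1})$ with $f_i\in k\cU$ and obtains $\im\pi(t)_M\subseteq\sum_i (f_i)_M(V_1)\subseteq V_1$; equality follows from part (1), and the higher powers $\ell>1$ follow from commutativity. The key idea you are missing is that one does not need $\pi$ to factor through a \emph{single} one-parameter subgroup; it suffices that $\pi(t)$ lie in the ideal generated by several $\alpha_i(u_{r-1})$, together with commutativity of $k\cU$ to guarantee that $V_1$ is stable under the coefficients.
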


\begin{proof} In view of \cite[(3.8)]{FPe05}, the map
\[ (\ast) \ \ \ \ \ \ \ \Xi_\cG : \Proj(V(\cG)) \lra \Pp(\cG) \ \ ; \ \ [\alpha] \mapsto [\alpha|_{k[u_{r-1}]}] \]
is bijective. 

(1) This is a direct consequence of \cite[(3.8)]{FPe10} and ($\ast$).

(2)  We first assume that $\cG=\cU$ is an infinitesimal abelian unipotent group of height $r$. General theory \cite[(14.4)]{Wa} provides an isomorphism
\[ k\cU \cong k[X_1,\ldots, X_s]/(X_1^{p^{n_1}}, \ldots, X_s^{p^{n_s}}) \ \ \ \ ; \ \ \ \ n_i \in \NN.\]
We write $v_i := X_i\!+\!(X_1^{p^{n_1}}, \ldots, X_s^{p^{n_s}})$ and put $kE := k[v_1^{p^{n_1-1}},\ldots,v_s^{p^{n_s-1}}]$. Hence $kE$ looks like the group algebra of a $p$-elementary
abelian group of rank $s$. According to \cite[(1.6)]{Fa07}, there exists a $kE$-linear projection $\pr_E : k\cU \lra kE$ with kernel $\ker \pr_E = \Rad(kU)kE$ that  induces a bijection
\[ \pr_{E,\ast} : \Pp(\cU) \lra \Pp(E) \ \ ;  \ \ [\alpha] \mapsto [\alpha_{(E)}],\]
where $\alpha_{(E)}$ is the unique $p$-point such that $\alpha_{(E)}(u_{r-1}):= \pr_E(\alpha(u_{r-1}))$. It now follows from ($\ast$) and \cite[(2.2)]{FPS} in conjunction with \cite[(1.4)]{Fa07} that ther
exist $\alpha_1,\ldots,\alpha_s \in V(\cU)$ and $\lambda_1,\ldots,\lambda_s \in k^\times$ such that 
\[ v_i^{p^{n_i-1}} \equiv \lambda_i\alpha_i(u_{r-1}) \ \ \ \ \ \modd \Rad(kU)\Rad(kE).\]
In particular, $kU\Rad(kE) = (v_1^{p^{n_1-1}},\ldots,v_s^{p^{n_s-1}}) = (\alpha_1(u_{r-1}),\ldots, \alpha_s(u_{r-1}))$, so that
\[ \im (v_i)^{p^{n_i-1}}_M \subseteq V_1 \ \ \ \ \ \ \ \ \text{for all} \ i \in \{1,\ldots, s\}.\]
Now let $\alpha \in \Pt(\cU)$ be a $p$-point. Since $\alpha(u_{r-1})^p=0$ there exist $f_i \in k\cU$ such that
\[ \alpha(u_{r-1}) = \sum_{i=1}^s v_i^{p^{n_i-1}}f_i.\]
As a result,
\[ \im \alpha(u_{r-1})_M \subseteq \sum_{i=1}^s \im (v_i)^{p^{n_i-1}}_M \subseteq V_1.\]
By virtue of (1), the $\cU$-module $M$ has constant rank $\rk(M)=\dim_kV_1$, so that $\dim_k \alpha(u_{r-1})_M=V_1$. We conclude that $\im \alpha(u_{r-1})_M = V_1$.

Since $k\cU$ is abelian, it we readily obtain $\im \alpha(u_{r-1})^\ell_M=\im \beta(u_{r-1})^\ell_M$ for all $\alpha, \beta \in \Pt(\cU)$ and $\ell \in \{1,\ldots,p\!-\!1\}$. As a result, 
the module $M$ has the equal images property.

In the general case, we let $\alpha \in \Pt(\cG)$ be a $p$-point. By definition, there exists an abelian unipotent subgroup scheme $\cU \subseteq \cG$ such that $\im \alpha
\subseteq k\cU$. Since $V(\cU) \subseteq V(\cG)$ (cf.\ \cite[(1.5)]{SFB1}), the first part of the proof implies that $\im \alpha(u_{r-1})_M^\ell = V_\ell$ for every $\ell \in \{1,\ldots,p\!-\!1\}$. This shows that
the $\cG$-module $M$ has the equal images property. \end{proof}

\bigskip
\noindent
For an abelian unipotent group scheme $\cU$, condition (P2) above is automatic, so that $p$-points $\alpha : \fA_p \lra k\cU$ are flat algebra homomorphisms, a requirement that makes no reference
to the coalgebra structure of $k\cU$. As noted above, we have an isomorphism
\[ k\cU \cong k[X_1,\ldots, X_s]/(X_1^{p^{n_1}}, \ldots, X_s^{p^{n_s}}) \ \ \ \ ; \ \ \ \ n_i \in \NN\]
of associative algebras. A truncated polynomial ring $k[X_1,\ldots,X_s]/(X_1^{p^{n_1}}, \ldots, X_s^{p^{n_s}})$ can be interpreted as the restricted enveloping algebra $U_0(\fn)$ of the abelian restricted 
Lie algebra $\fn = \bigoplus_{i=1}^s \fn_{n_i}$, where $\fn_{n_i} := \bigoplus_{j=0}^{n_i-1}kx_i^{[p]^j} \ \ ; \ \ x_i^{[p]^{n_i-1}} \ne 0 = x_i^{[p]^{n_i}}$ is the $n_i$-dimensional nil-cyclic restricted Lie 
algebra and $s=\dim V(\fn)$. It follows from \cite[(3.8)]{FPe05} that $s = \dim \Pp(\cU)\!+\!1$. We shall exploit this observation to generalize some of our earlier results.

\bigskip

\begin{Thm} \label{FG2} Let $\cU$ be an abelian unipotent group scheme. Suppose that $M$ is a $\cU$-module of constant rank. Then the following 
statements hold:
\begin{enumerate}
\item If $\dim_k\Rad(M) \le \dim \Pp(\cU)$, then $M$ has the equal images property.
\item If $\max\{\dim_k\Rad(M),\dim_kM/\Soc(M)\} \le \dim \Pp(\cU)$, then $\rk(M)=0$. 
\item If $\dim_kM \le \dim \Pp(\cU)\!+\!1$, then $\rk(M)=0$. \end{enumerate}\end{Thm}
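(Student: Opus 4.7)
The plan is to reduce each statement to the restricted Lie algebra results of Theorem~\ref{TrLA1} via the algebra isomorphism $k\cU\cong U_0(\fn)$, where $\fn=\bigoplus_{i=1}^{s}\fn_{n_i}$ and $s=\dim\Pp(\cU)+1$. Inside $\fn$ sits the elementary subalgebra
\[
\fe:=\bigoplus_{i=1}^{s}kx_i^{[p]^{n_i-1}}
\]
of dimension $s$, which is exactly the subalgebra $kE\subseteq k\cU$ figuring in the proof of Lemma~\ref{FG1}(2). The strategy is to pass from the $\cU$-module $M$ to the $U_0(\fe)$-module $N:=M|_\fe$, apply Theorem~\ref{TrLA1} to $N$, and transfer the conclusion back to $\cU$-modules via the commutative-algebra argument already used in the proof of Lemma~\ref{FG1}.

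The preliminary step is to verify that $N$ is a $U_0(\fe)$-module of constant rank with $\rk(N)=\rk(M)$: for any $y\in\fe\smallsetminus\{0\}$, after a $\GL_s(k)$-change of basis in $\fe$ we may assume $y=\lambda v_i^{p^{n_i-1}}$, and the assignment $\alpha_y(t):=y$ is then a $p$-point of $\cU$ (it factors through the abelian unipotent subgroup scheme corresponding to $kE$, and $y^{p-1}\ne 0$), so the constant rank hypothesis on $M$ forces $\rk(y_M)=\rk(M)$. Moreover $\fe\subseteq\Rad(k\cU)$ yields $\Rad(N)=\fe\cdot M\subseteq\Rad(M)$ and dually $\dim_k N/\Soc(N)\le\dim_k M/\Soc(M)$. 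For (1), the hypothesis $\dim_k\Rad(M)\le s-1=\dim_k\fe-1$ then gives $\dim_k\Rad(N)\le\dim_k\fe-1$, and Theorem~\ref{TrLA1}(1) forces $N\in\EIP(\fe)$. I would then rerun the argument from the proof of Lemma~\ref{FG1}(2): every $p$-point $\alpha$ satisfies $\im\alpha(t)_M\subseteq V_1:=\sum_{y\in\fe\smallsetminus\{0\}}\im y_M$, with equality by the constant rank assumption, and commutativity of $k\cU$ propagates this to all powers.

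For (2), applying (1) to both $M$ and $M^*$ (using $\dim_k\Rad(M^*)=\dim_k M/\Soc(M)\le s-1$) yields that $M$ has both the equal images and equal kernels properties, and hence so does $N$. Assuming $s\ge 2$, Lemma~\ref{TrLA0} then forces $N$ to be trivial, so $\fe\cdot M=0$; a final appeal to the proof of Lemma~\ref{FG1}(2) gives $\alpha(t)_M=0$ for every $p$-point $\alpha$, i.e.\ $\rk(M)=0$. The degenerate case $s=1$ is immediate: the hypothesis collapses to $\Rad(M)=0=M/\Soc(M)$, so $M$ is semisimple over the local algebra $k\cU$ and therefore trivial. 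Statement (3) follows at once from (2): since $k\cU$ is local Frobenius, any non-zero $M$ has $M/\Rad(M)\ne 0$ and $\Soc(M)\ne 0$, so $\dim_k M\le s$ forces both $\dim_k\Rad(M)\le s-1$ and $\dim_k M/\Soc(M)\le s-1$.

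The main technical obstacle is the transfer step: Theorem~\ref{TrLA1} gives control only over the action of $\fe$, while the statement concerns arbitrary $p$-points of $\cU$. The crucial structural input, already isolated in the proof of Lemma~\ref{FG1}(2), is the identity $k\cU\cdot\Rad(kE)=(v_1^{p^{n_1-1}},\ldots,v_s^{p^{n_s-1}})$, expressing every element of $\Rad(k\cU)$---and in particular every $\alpha(t)$---as a $k\cU$-combination of the top powers lying in $\fe$. This commutative-algebra fact is what allows the passage from $\EIP(\fe)$ to the equal images property as a $\cU$-module, and it is where the hypothesis ``abelian'' on $\cU$ is genuinely used.
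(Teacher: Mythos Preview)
Your proposal is correct and follows essentially the same route as the paper: reduce via the isomorphism $k\cU\cong U_0(\fn)$ to the elementary subalgebra $\fe_s=V(\fn)$, apply Theorem~\ref{TrLA1} there, and transfer back using the commutative-algebra fact (from the proof of Lemma~\ref{FG1}(2)) that every $\alpha(t)$ lies in the ideal $(v_1^{p^{n_1-1}},\ldots,v_s^{p^{n_s-1}})$.

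Two small remarks. First, your justification that each nonzero $y\in\fe$ gives a $p$-point is phrased awkwardly: a $\GL_s(k)$-change of basis in $\fe$ does not literally reduce $y$ to a single generator $v_i^{p^{n_i-1}}$ inside $k\cU$. The clean argument is simply that $kE\cong k[Y_1,\ldots,Y_s]/(Y_1^p,\ldots,Y_s^p)$, so $y^{p-1}\ne 0$ in $kE$, and $k\cU$ is free over $kE$; hence $\alpha_y$ is flat. Second, in part~(2) you apply part~(1) to $M^\ast$; for this you should remark that $M^\ast$ has constant rank (which follows since the antipode $\eta$ of the commutative, cocommutative Hopf algebra $k\cU$ is an algebra automorphism, so $\alpha\mapsto\eta\circ\alpha$ permutes $\Pt(\cU)$). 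The paper's proof of (2) avoids this detour by restricting first and invoking Theorem~\ref{TrLA1}(3) directly on $M|_{\fe_s}$, using $\Soc(M)|_{\fe_s}\subseteq\Soc(M|_{\fe_s})$; the two arguments are equivalent, as Theorem~\ref{TrLA1}(3) itself passes through Lemma~\ref{TrLA0}.
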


\begin{proof} (1) Observing \cite[(14.4)]{Wa}, we write $k\cU \cong  k[X_1,\ldots,X_s]/(X_1^{p^{n_1}}, \ldots, X_s^{p^{n_s}}) \cong U_0(\fn)$, so that the spaces of flat points coincide. As $\fn$ is 
abelian, the nullcone $V(\fn)$ is the elementary restricted Lie algebra $\fe_s$ of dimension $s=\dim V(\fn)=\dim \Pp(\cU)\!+\!1$. Moreover, since $V(\fn)=V(\fe_s)=\fe_s$, an application of \cite[(3.8)]
{FPe05} shows that the canonical inclusion $\iota : \fe_s \hookrightarrow \fn$ induces a homeomorphism 
\[ \iota_\ast : \Pp(\fe_s) \lra \Pp(\fn) \ \ ; \ \ [\alpha] \mapsto [\iota\circ \alpha].\]
If $\fn = \bigoplus_{i=1}^s\fn_{n_i}$, then $\fe_s = \bigoplus_{i=1}^s\fn_{n_i}^{[p]^{n_i-1}}$, whence $\Rad(M|_{\fe_s}) \subseteq \Rad(M)|_{\fe_s}$. By Theorem \ref{TrLA1}(1), the map $\msim_M : 
\PP(\fe_s) \lra \Gr_{\rk(M)}(M)$ is constant. Since $\fe_s$ is abelian, a consecutive application of Theorem \ref{LASN2} and Lemma \ref{FG1} yields the assertion.

(2) Since $\Soc(M)|_{\fe_s} \subseteq \Soc(M|_{\fe_s})$, it follows from Theorem \ref{TrLA1} that $M|_{\fe_s}$ is trivial. Hence Lemma \ref{FG1} implies $\rk(M)=\max \{\rk(x_M) \ ; \ x \in \fe_s\} =0$. 

(3) This is a direct consequence of (2). \end{proof}

\bigskip
\noindent
In view of $\dim \Pp(\GG_{a(r)})\!+\!1 = r$, the foregoing result shows in particular, that a $\GG_{a(r)}$-module $M$ of constant rank with $\dim_kM \le r$ is trivial. This strengthens \cite[(3.18)]{FPe11}.

\bigskip
\noindent
Given a finite group scheme $\cG$, we let 
\[ \aurk(\cG) := \max \{\dim \Pp(\cU)\!+\!1 \ ; \ \cU \subseteq \cG \ \text{abelian unipotent subgroup}\}\]
be the {\it abelian unipotent rank} of $\cG$. If $G$ is a finite group, Quillen's Dimension Theorem \cite[(5.3.8)]{Be2} ensures that this number coincides with the {\it $p$-rank} $\rk_p(G)$ of $G$, that is, 
the maximum of all ranks of the $p$-elementary abelian subgroups of $G$. Thus, for finite groups $G$, we have $\aurk(G) = \dim \Pp(G)\!+\!1$. In general, there is an inequality $\aurk(\cG) \le 
\dim \Pp(\cG)\!+\!1$, with both numbers possibly being arbitrarily far apart: Assuming $p\ge 3$, we let $\fh_n$ be the $(2n\!+\!1)$-dimensional $p$-trivial Heisenberg algebra. Then $\aurk(\fh_n) = n\!+\!
1$, while $\dim \Pp(\fh_n)\!+\!1= 2n\!+\!1$.

In Section \ref{S:EA} below we will see that the rank of a group scheme is computable via elementary abelian subgroup schemes. 

\bigskip

\begin{Cor} \label{FG3} Let $M$ be a $\cG$-module of constant rank. If $\dim_kM \le \aurk(\cG)$, then $\rk(M)=0$.\end{Cor}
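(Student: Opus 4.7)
The plan is to reduce to the abelian unipotent setting already treated in Theorem~\ref{FG2}(3). Pick an abelian unipotent subgroup scheme $\cU \subseteq \cG$ which realizes the maximum in the definition of $\aurk(\cG)$, so that
\[ \dim \Pp(\cU)\!+\!1 = \aurk(\cG) \ge \dim_k M.\]
The entire argument will then consist in transferring the constant rank hypothesis from $\cG$ to $\cU$ and applying Theorem~\ref{FG2}(3) to $M|_{\cU}$.

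The key observation is that any $p$-point $\alpha : \fA_p \lra k\cU$ of $\cU$ automatically satisfies the two conditions (P1) and (P2) required for $\alpha$ to be a $p$-point of $\cG$ when composed with the inclusion $k\cU \hookrightarrow k\cG$: flatness persists because $k\cG$ is free over $k\cU$ (as $\cU$ is a subgroup scheme of $\cG$), and the abelian unipotent subgroup scheme witnessing (P2) for $\alpha$ inside $\cU$ also does the job inside $\cG$. Thus each $p$-point of $\cU$ extends to a $p$-point of $\cG$, and since $M$ has constant rank as a $\cG$-module, we obtain
\[ \rk(\alpha(t)_{M|_\cU}) = \rk(\alpha(t)_M) = \rk(M) \quad \text{for every $\alpha \in \Pt(\cU)$}.\]
Hence $M|_{\cU}$ is a $\cU$-module of constant rank with $\rk(M|_{\cU}) = \rk(M)$.

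Since $\dim_k(M|_{\cU}) = \dim_k M \le \dim \Pp(\cU)\!+\!1$, Theorem~\ref{FG2}(3) applies and gives $\rk(M|_{\cU}) = 0$, whence $\rk(M) = 0$. There is no real obstacle here; the only point requiring mild care is the verification that $p$-points of $\cU$ genuinely qualify as $p$-points of $\cG$, which is immediate from the two defining properties (P1), (P2), and it is this step that justifies restricting to $\cU$ without losing the constant rank property.
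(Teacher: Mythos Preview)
Your proof is correct and follows exactly the same route as the paper: choose an abelian unipotent subgroup $\cU$ realizing $\aurk(\cG)$, restrict $M$ to $\cU$, and invoke Theorem~\ref{FG2}(3). The paper's proof is terser and simply asserts that $M|_\cU$ has constant rank, whereas you spell out the justification via the fact that $p$-points of $\cU$ are $p$-points of $\cG$; this extra detail is correct and useful.
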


\begin{proof} Let $\cU \subseteq \cG$ be an abelian unipotent subgroup such that $\dim \Pp(\cU)\!+\!1=\aurk(\cG)$. Since $M|_\cU$ has constant rank, Theorem \ref{FG2} implies that $\rk(M)= \rk(M|_
\cU)=0$. \end{proof}

\bigskip
\noindent
Let $\cG$ be a finite group scheme. We denote by $\cx_\cG(k)$ the {\it complexity} of the trivial $\cG$-module, cf.\ \cite[(\S 5.1)]{Be2}. Thanks to \cite[(5.6)]{FPe05}, we have $\cx_\cG(k)=\dim 
\Pp(\cG)\!+\!1$.

Since many of our results will require the assumption $\aurk(\cG)\ge 2$, we indicate a structural ramification of this condition:

\bigskip 

\begin{Lem} \label{FG4} Let $\cG$ be a finite group scheme such that $\aurk(\cG) \ge 2$. Then there exists a subgroup scheme $\cE \subseteq \cG$ such that $\cE$ is isomorphic 
to one of the following group schemes: $\GG_{a(2)}, \GG_{a(1)}\!\times\!\GG_{a(1)}, \GG_{a(1)}\!\times\!E_1, E_2$. \end{Lem}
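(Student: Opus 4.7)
The plan is to exploit the canonical structural decomposition of abelian unipotent group schemes over an algebraically closed field. I first invoke the hypothesis $\aurk(\cG) \ge 2$ to choose an abelian unipotent subgroup $\cU \subseteq \cG$ with $\dim\Pp(\cU)+1 \ge 2$; combined with the identification $\cx_\cU(k) = \dim \Pp(\cU)+1$ recalled just before the statement of the lemma, this translates to $\cx_\cU(k) \ge 2$. Since $k$ is algebraically closed, the connected-\'etale sequence of the commutative group scheme $\cU$ splits, so $\cU \cong \cU^0 \times \pi_0(\cU)$ with $\cU^0$ infinitesimal and $\pi_0(\cU)$ a constant group scheme whose group of $k$-points $A$ is a finite abelian $p$-group.

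Next, the tensor decomposition $k\cU \cong k\cU^0 \otimes_k kA$ of Hopf algebras combined with the K\"unneth formula for $\Ext^\bullet_{k\cU}(k,k)$ yields the additivity $\cx_\cU(k) = \cx_{\cU^0}(k) + \cx_A(k) \ge 2$, which reduces the argument to three cases. If $\cx_A(k) \ge 2$, then $A$ has at least two cyclic factors in its decomposition as an abelian $p$-group and hence contains $(\ZZ/(p))^2$, producing an embedding $E_2 \hookrightarrow \pi_0(\cU) \hookrightarrow \cG$. If $\cx_A(k)=1$ and $\cx_{\cU^0}(k) \ge 1$, then $A$ is cyclic of $p$-power order (hence contains $E_1$) and the Frobenius kernel $\cU^0[F]$ is a non-trivial height-$1$ abelian unipotent subgroup of $\cU^0$; being attached to the $p$-trivial abelian restricted Lie algebra $\Lie(\cU^0)$, it is isomorphic to $\GG_{a(1)}^t$ with $t \ge 1$, whence $\GG_{a(1)}\times E_1 \hookrightarrow \cG$.

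The remaining case $\cx_A(k) = 0$ and $\cx_{\cU^0}(k) \ge 2$ I would handle by splitting on $\dim_k\Lie(\cU^0)$. If $\dim_k\Lie(\cU^0)\ge 2$, the same description of the Frobenius kernel gives $\cU^0[F] \cong \GG_{a(1)}^t$ with $t \ge 2$, and therefore $\GG_{a(1)}\times\GG_{a(1)}\hookrightarrow \cG$. If $\dim_k\Lie(\cU^0) = 1$, then the classification of one-dimensional commutative unipotent formal groups over an algebraically closed field of characteristic $p$ forces $\cU^0 \cong \GG_{a(r)}$ for some $r$, and the identity $\cx_{\GG_{a(r)}}(k)=r$ together with $\cx_{\cU^0}(k) \ge 2$ gives $r \ge 2$, producing $\GG_{a(2)} \subseteq \cU^0 \subseteq \cG$. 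The main obstacle I anticipate is this last sub-case: invoking the classification (via Dieudonn\'e theory or the equivalent classification of commutative one-dimensional formal groups) to pass from a one-dimensional Lie algebra to an isomorphism $\cU^0 \cong \GG_{a(r)}$ is the one non-trivial structural ingredient beyond the elementary case analysis.
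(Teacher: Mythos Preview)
Your overall architecture matches the paper's: pick an abelian unipotent $\cU$ of complexity $\ge 2$, decompose $\cU\cong\cU^0\times\cU_{\rm red}$, and run a case analysis. Using K\"unneth for additivity of complexity is a clean way to organize the split; the reduced and mixed cases go through. The genuine gap is in the purely infinitesimal case.

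Your assertion that $\Lie(\cU^0)$ is a \emph{$p$-trivial} abelian restricted Lie algebra is false: it is abelian and $p$-nilpotent, but the $p$-map need not vanish. A concrete witness is the Cartier dual $\GG_{a(2)}^D$, a height-$1$ abelian unipotent infinitesimal group with $k\GG_{a(2)}^D\cong k[T]/(T^{p^2})$ and $T$ primitive; here $\Lie(\GG_{a(2)}^D)=kT\oplus kT^p$ with $T^{[p]}=T^p\ne 0$. Hence $\cU^0[F]$ is not in general isomorphic to $\GG_{a(1)}^t$, and the inference ``$\dim_k\Lie(\cU^0)\ge 2\Rightarrow\GG_{a(1)}\!\times\!\GG_{a(1)}\hookrightarrow\cU^0$'' is unjustified. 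For instance, taking $\cU^0=\GG_{a(2)}^D\times\GG_{a(1)}$ one has $\cx_{\cU^0}(k)=2$ and $\dim_k\Lie(\cU^0)=3$, yet $\cU^0[F]=\cU^0\not\cong\GG_{a(1)}^3$; the desired embedding $\GG_{a(1)}^2\subseteq\cU^0$ happens to hold here, but not by your argument. (In the mixed case you only used $t\ge 1$, and that much is salvageable: a nonzero $p$-nilpotent abelian Lie algebra has nonzero nullcone, which produces one copy of $\GG_{a(1)}$.)

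The paper fixes this by splitting on $\dim V(\Lie(\cU^0))$ rather than on $\dim\Lie(\cU^0)$: if $\dim V(\Lie(\cU^0))\ge 2$, then $\fe_2\subseteq\Lie(\cU^0)$ genuinely yields $\GG_{a(1)}\!\times\!\GG_{a(1)}$ inside the first Frobenius kernel of $\cU^0$; if $\dim V(\Lie(\cU^0))=1$, the paper invokes \cite{FRV} (infinitesimal unipotent group schemes of complexity $1$) to force a copy of $\GG_{a(2)}$. This also bypasses your ``main obstacle'' entirely --- no classification of one-dimensional commutative formal groups is needed.
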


\begin{proof} By assumption, there exists an abelian unipotent subgroup $\cU \subseteq \cG$ such that $\dim \Pp(\cU) \ge 1$. General theory provides a decomposition
\[ \cU = \cU^0\!\times\!\cU_{\rm red}.\]
If both factors are non-trivial, then $\GG_{a(1)}\subseteq \cU^0$ and $E_1 \subseteq \cU_{\rm red}$, so that $\cE := \GG_{a(1)}\!\times\!E_1$ is the desired group. If $\cU^0 = e_k$,
then Quillen's dimension theorem provides an elementary abelian subgroup $E_r \subseteq \cU(k)$ such that $r=\dim \Pp(\cU)\!+\!1$. Thus, $\cE:=E_2 \subseteq \cU_{\rm red}$ is a suitable
subgroup. In the remaining case, $\cU$ is an infinitesimal unipotent subgroup. Let $\fu := \Lie(\cU)$ be its Lie algebra, so that $V(\fu)$ is an elementary subalgebra. If $\dim V(\fu) \ge 2$, $\fe_2
\subseteq V(\fu)$. It follows that $\cU'$ contains a subgroup $\cE$ that is isomorphic to $\GG_{a(1)}\!\times\!\GG_{a(1)}$. Alternatively, $\dim V(\fu)=1$, so that $\cU$ contains exactly one subgroup of 
type $\GG_{a(1)}$. If it contains no subgroups of type $\GG_{a(2)}$, then \cite[(5.2)]{FRV} yields $1=\cx_{\cU}(k)=1\!+\!\dim \Pp(\cU)$, a contradiction. \end{proof}

\bigskip

\subsection{Modules for elementary abelian group schemes}\label{S:EA}
In this section, we introduce a class of group schemes that are natural generalizations of $p$-elementary abelian groups and elementary restricted Lie algebras. Recall that a finite reduced group 
scheme $\cG$ is completely determined by its finite group $\cG(k)$ of $k$-rational points. Moreover, any finite group $G$ gives rise to a reduced group scheme $\cG_G={\rm Spec}_k(kG^\ast)$, 
where $\cG_G(k)=G$. We shall henceforth not distinguish between a finite group $G$ and its associated reduced group scheme $\cG_G$. 

\bigskip

\begin{Definition} An abelian group scheme $\cE$ is called {\it elementary abelian}, provided there exist subgroups
$\cE_1,\ldots,$ $\cE_n \subseteq \cE$ such that
\begin{enumerate}
\item[(a)] $\cE=\cE_1\cdots\cE_n$, and
\item[(b)] for each $i \in \{1,\ldots,n\}$, we have  isomorphisms $\cE_i \cong \GG_{a(r_i)}$ or $\cE_i \cong E_1$.
\end{enumerate}  \end{Definition}

\bigskip
\noindent
Since the group schemes $E_1$ and $\GG_{a(1)}$ are simple, it follows that a reduced elementary abelian group scheme $\cE$ is isomorphic to some $E_r$, while we have $\cE\cong (\GG_{a(1)})^r$ 
for every infinitesimal elementary abelian group scheme $\cE$ of height $1$. In the latter case, there are isomorphisms $\Lie(\cE)\cong \fe_r$ and $k\cE \cong U_0(\fe_r)$.

The dimension $\dim_kk\cG$ of a finite group scheme $\cG$ is also referred to as the {\it order} of $\cG$. Thus, an abelian unipotent subgroup $\cU\subseteq \cG$ is contained in an abelian unipotent 
subgroup $\cU_0$, whose order is maximal subject to these properties. The group scheme $\cU_0$ is maximal subject to being abelian and unipotent. 

Our next result shows that Hopf algebras of elementary abelian group schemes are isomorphic (as associative algebras) to group algebras of $p$-elementary abelian groups.

\bigskip

\begin{Lem} \label{EA1} Let $\cU$ be a finite abelian unipotent group scheme. Then the following statements hold:
\begin{enumerate}
\item There exists a unique elementary abelian subgroup $\cE_\cU \subseteq \cU$ that contains any other elementary abelian subgroup of $\cU$. 
\item The canonical map $\iota_\ast : \Pp(\cE_\cU) \lra \Pp(\cU)$ is a homeomorphism.
\item If $\cU$ is elementary abelian, then $k\cU \cong k[X_1,\ldots, X_n]/(X_1^p,\ldots,X^p_n)$, where $n=\aurk(\cU)$. \end{enumerate} \end{Lem}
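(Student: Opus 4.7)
My plan is to exploit the canonical decomposition $\cU = \cU^0\!\times\!\cU_{\rm red}$ of the finite abelian group scheme $\cU$ into its infinitesimal and étale components, handle each factor uniformly via the Waterhouse-style structure theorem cited in the proof of Lemma \ref{FG1}, and then combine. For part (1), that theorem gives an algebra isomorphism $k\cU \cong k[X_1,\ldots,X_s]/(X_i^{p^{n_i}})$; the subgroup scheme $\cE_\cU \subseteq \cU$ corresponding to the Hopf subalgebra generated by the highest $p$-power generators $\{X_i^{p^{n_i-1}}\}$ is elementary abelian, with its measure algebra isomorphic to $k[Y_1,\ldots,Y_s]/(Y_j^p)$. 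Maximality and uniqueness follow because any elementary abelian subgroup $\cE' \subseteq \cU$ is determined by a Hopf subalgebra whose augmentation ideal is generated by elements with vanishing $p$-th power, and such elements must lie in the subalgebra generated by the $X_i^{p^{n_i-1}}$.

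For (2), continuity of $\iota_\ast$ is built into the definition of the topology on $\Pp(\cdot)$ via support varieties. For the bijection, I would invoke the $k\cE_\cU$-linear projection $\pr_E : k\cU \lra k\cE_\cU$ with kernel $\Rad(k\cU)\Rad(k\cE_\cU)$, cited in the proof of Lemma \ref{FG1} from \cite[(1.6)]{Fa07}; the argument there shows precisely that this projection induces an inverse to $\iota_\ast$ on equivalence classes of $p$-points. To upgrade the bijection to a homeomorphism, use that $k\cU$ is free as a right $k\cE_\cU$-module (e.g.\ by Nichols--Zoeller), so that projectivity of $\alpha^\ast(M)$ as a $\fA_p$-module for a $\cU$-module $M$ is detected on restriction to $\cE_\cU$. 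Closed subsets of $\Pp(\cU)$, being loci of non-projectivity of $\cU$-modules, then correspond precisely to such loci in $\Pp(\cE_\cU)$.

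For (3), assume $\cU$ is elementary abelian, and write $\cU^0 \cong \prod_i \GG_{a(r_i)}$ and $\cU_{\rm red} \cong E_b$. Since $k\GG_{a(r)} \cong k[X_1,\ldots,X_r]/(X_j^p)$ as an associative algebra, and $kE_b \cong k[Y_1,\ldots,Y_b]/(Y_j^p)$ via the substitution $Y_j := \sigma_j\!-\!1$ (using $(Y_j\!+\!1)^p\!-\!1 = Y_j^p$ in characteristic $p$), the tensor product yields $k\cU \cong k[Z_1,\ldots,Z_n]/(Z_\ell^p)$ with $n = \sum_i r_i + b$. To identify $n$ with $\aurk(\cU)$, observe that $\cU$ is its own largest abelian unipotent subgroup, so $\aurk(\cU) = \dim\Pp(\cU)\!+\!1 = \cx_\cU(k)$; multiplicativity of complexity under tensor products of local Hopf algebras then gives $\cx_\cU(k) = \sum_i \cx_{\GG_{a(r_i)}}(k) + \cx_{E_b}(k) = \sum_i r_i + b = n$.

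The main obstacle lies in part (2), specifically in upgrading $\iota_\ast$ from a continuous bijection to a genuine homeomorphism. The set-theoretic content is supplied by the projection $\pr_E$ already in the literature, but verifying that support-variety loci in $\Pp(\cU)$ correspond precisely to those in $\Pp(\cE_\cU)$ under $\iota_\ast^{-1}$ requires showing that projectivity of a $\cU$-module's pull-back along a $p$-point is detected on $\cE_\cU$. This is where freeness of $k\cU$ over $k\cE_\cU$ plays a decisive role; once that freeness is established, the desired topological compatibility follows by a standard support-variety argument.
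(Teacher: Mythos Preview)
Your approach has a genuine gap in part (1) that propagates into (2). The Waterhouse structure theorem you cite gives only an \emph{algebra} isomorphism $k\cU \cong k[X_1,\ldots,X_s]/(X_i^{p^{n_i}})$; there is no reason for the subalgebra generated by $\{X_i^{p^{n_i-1}}\}$ to be closed under the comultiplication of $k\cU$, hence no reason for it to correspond to a subgroup scheme. (This is exactly why, in the proof of Lemma~\ref{FG1}, the algebra $kE$ is described merely as ``looking like'' a group algebra, and $\pr_E$ is a device on the level of associative algebras only.) The paper sidesteps this entirely: it takes $\cE_\cU$ to be any elementary abelian subgroup of maximal order and observes that for any elementary abelian $\cE' \subseteq \cU$ the product $\cE'\cE_\cU$ is again elementary abelian (it is a subgroup since $\cU$ is abelian, and it is generated by subgroups of the required type), so maximality gives $\cE' \subseteq \cE'\cE_\cU = \cE_\cU$.

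This gap infects (2): the projection $\pr_E$ from \cite{Fa07} lands in the algebra $kE$, which you have not identified with the Hopf subalgebra $k\cE_\cU$. The paper instead gets surjectivity of $\iota_\ast$ from \cite[(4.2)]{FPe05} (every $p$-point is equivalent to one factoring through some elementary abelian subgroup, which by (1) sits in $\cE_\cU$), and gets injectivity and closedness via the induction $M \mapsto k\cU\!\otimes_{k\cE_\cU}\!M$: freeness gives $(k\cU\!\otimes_{k\cE_\cU}\!M)|_{\cE_\cU} \cong M^n$, whence $\iota_\ast(\Pp(\cE_\cU)_M) = \Pp(\cU)_{k\cU\otimes_{k\cE_\cU}M}$. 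There is also a smaller over-claim in (3): the definition only makes $\cU$ a \emph{quotient} of $\prod_i\cE_i$, not an external product; the paper uses just the resulting surjection $\bigotimes_i k\cE_i \twoheadrightarrow k\cU$ to force $x^p=0$ on $\Rad(k\cU)$, and then the structure theorem plus projection to $k[X_i]$ forces each exponent to be $p$. Your identification $n=\aurk(\cU)$ via complexity is correct and matches the paper.
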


\begin{proof} (1) We let $\cE_{\cU}$ be an elementary abelian subgroup of maximal order. If $\cE \subseteq \cU$ is elementary abelian, then $\cE\cE_{\cU}$ is an elementary
abelian subgroup of $\cU$ containing $\cE_{\cU}$. Consequently, $\cE \subseteq \cE\cE_{\cU}=\cE_{\cU}$, as desired. 

(2) We denote by $\iota : \cE_\cU \lra \cU$ the canonical inclusion. Let $\alpha : \fA_p \lra k\cU$ be a $p$-point. Thanks to \cite[(4.2)]{FPe05}, there exists a $p$-point $\beta \in \Pt(\cU)$ and an elementary abelian subgroup $\cE \subseteq \cU$
such that $\alpha\! \sim\! \beta$ and $\im \beta \subseteq k\cE$. In view of (1), this implies that $[\alpha] \in \im \iota_\ast$.

Now suppose that $\alpha, \beta \in \Pt(\cE_\cU)$ are $p$-points such that $\iota_\ast([\alpha])=\iota_\ast([\beta])$. Let $M \in \modd \cE_\cU$. Since the abelian algebra $k\cU$ is
free over $k\cE_\cU$, we have $(k\cU\!\otimes_{k\cE_\cU}\!M)|_{k\cE_\cU}\cong M^n$, where $n$ is the rank of $k\cU$ over $k\cE_\cU$. Thus, if $\alpha^\ast(M)$ is projective, then
$(\iota\!\circ\!\alpha)^\ast(k\cU\!\otimes_{k\cE_\cU}\!M) \cong \alpha^\ast(M)^n$ is projective. Since $\iota_\ast([\alpha])=\iota_\ast([\beta])$, it follows that $\beta^\ast(M)^n \cong 
(\iota\!\circ\!\beta)^\ast(k\cU\!\otimes_{k\cE_\cU}\!M)$ is also projective. Consequently, the module $\beta^\ast(M)$ is projective. As a result, $\alpha\!\sim\!\beta$, so that $\iota_\ast$ is injective. 
The same arguments show that $\iota_\ast(\Pp(\cE_\cU)_M)=\Pp(\cU)_{k\cU\!\otimes_{k\cE_\cU}\!M}$ for all $M \in \modd \cE_\cU$. Hence the continuous bijective map $\iota_\ast$ is closed 
and therefore a homeomorphism.

(3) By assumption, there exists a quotient map $\prod_{i=1}^n\cE_i \lra \cU$, where $\cE_i = \GG_{a(r_i)},E_1$. There results a surjection
\[ \gamma : \bigotimes_{i=1}^n k\cE_i \twoheadrightarrow k\cU,\]
of Hopf algebras. As both algebras are local, we have $\gamma(\Rad(\bigotimes_{i=1}^n k\cE_i))=\Rad(k\cU)$. Since $x^p=0$ for all $x \in \Rad(\bigotimes_{i=1}^n k\cE_i)$, we conlude that $x^p=0$ for 
all $x \in \Rad(k\cU)$.

As $\cU$ is abelian an unipotent, general theory (\cite[(14.4)]{Wa}) provides an isomorphism
\[ k\cU \cong k[X_1,\ldots,X_n]/(X_1^{p^{a_1}},\ldots,X_n^{p^{a_n}}),\]
where $a_i \in \NN$ and $\aurk(\cU)=\dim \Pp(\cU)\!+\!1=\cx_\cU(k) =n$ coincides with the complexity $\cx_{\cU}(k)$ of the trivial $\cU$-module (cf.\ \cite[(5.6)]{FPe05}). By the above, we have 
$X_i^p \in (X_1^{p^{a_1}},\ldots,X_n^{p^{a_n}})$. By applying the canonical map $\omega_i : k[X_1,\ldots,X_n] \lra k[X_i]$ sending $X_j$ onto $\delta_{ij}X_i$, we see that $X_i^p \in (X_i^{p^{a_i}})$. This implies $a_i=1$. \end{proof}

\bigskip

\begin{Example} If $U=\cU$ is reduced, then $U$ is an abelian $p$-group and $\cE_\cU$ is the subgroup of elements of order $\le p$. \end{Example}

\bigskip

\begin{Cor} \label{EA2} Let $\cE$ be a finite group scheme.
\begin{enumerate} 
\item If $k\cE\cong k[X_1,\ldots,X_n]/(X_1^p,\ldots,X_n^p)$, then $\cE$ is elementary abelian and $n=\aurk(\cE)$.
\item If $\cE$ is elementary abelian and $\cE'\subseteq \cE$ is a subgroup, then $\cE'$ is elementary abelian. \end{enumerate}\end{Cor}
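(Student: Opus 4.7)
The plan is to establish (1) first and then derive (2) from it via the same algebraic criterion.

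For part (1), the hypothesised presentation shows that $k\cE$ is a commutative local algebra, so $\cE$ is an abelian unipotent group scheme. Applying Lemma \ref{EA1}(3), there is an algebra isomorphism $k\cE \cong k[Y_1,\ldots,Y_m]/(Y_1^{p^{a_1}},\ldots,Y_m^{p^{a_m}})$ with $m = \aurk(\cE)$. The key observation is that every element of the maximal ideal of $k[X_1,\ldots,X_n]/(X_1^p,\ldots,X_n^p)$ has vanishing $p$-th power, since in characteristic $p$ one has $(\sum c_\alpha X^\alpha)^p = \sum c_\alpha^p X^{p\alpha}$ and each $X_i^p = 0$. Transporting this property across the isomorphism yields $Y_i^p = 0$ in the quotient, and pulling back along the projection $Y_j \mapsto \delta_{ij}Y_i$ gives $Y_i^p \in (Y_i^{p^{a_i}}) \subseteq k[Y_i]$, which forces $a_i = 1$. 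A dimension count then delivers $n = m = \aurk(\cE)$.

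This shows only that $k\cE$ is algebra-isomorphic to the group algebra of an elementary abelian object; to upgrade to a group-theoretic statement, I invoke Lemma \ref{EA1}(1)--(2) for the maximal elementary abelian subgroup $\cE_\cE \subseteq \cE$. The homeomorphism $\iota_\ast : \Pp(\cE_\cE) \to \Pp(\cE)$ forces $\dim \Pp(\cE_\cE) = \dim \Pp(\cE)$. Since both $\cE$ and $\cE_\cE$ are abelian unipotent, Lemma \ref{EA1}(3) identifies their $\aurk$ with the number of variables in their truncated polynomial presentations, and these numbers agree because both equal $\dim \Pp + 1$. Consequently $\dim_k k\cE_\cE = p^{\aurk(\cE_\cE)} = p^n = \dim_k k\cE$, and the closed inclusion $\cE_\cE \subseteq \cE$ is an equality, so $\cE$ itself is elementary abelian.

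For part (2), any subgroup of an abelian unipotent group scheme is again abelian unipotent, so Lemma \ref{EA1}(3) furnishes a presentation $k\cE' \cong k[Z_1,\ldots,Z_\ell]/(Z_1^{p^{b_1}},\ldots,Z_\ell^{p^{b_\ell}})$. The closed immersion $\cE' \hookrightarrow \cE$ dualises to a Hopf subalgebra inclusion $k\cE' \hookrightarrow k\cE$, under which $\Rad(k\cE') \subseteq \Rad(k\cE)$ since both algebras are local Artinian. Because $\cE$ is elementary abelian, every $y \in \Rad(k\cE)$ satisfies $y^p = 0$, so the same holds for every element of $\Rad(k\cE')$. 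Repeating the projection argument from (1) now forces $b_i = 1$ for each $i$, so $k\cE'$ has the truncated polynomial form required by (1), which in turn certifies that $\cE'$ is elementary abelian.

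The main potential obstacle is the jump in (1) from the algebra isomorphism type of $k\cE$ to the group-scheme structure on $\cE$: the former does not obviously determine the latter, and the passage through $\cE_\cE$ and the bijection on $p$-point spaces provided by Lemma \ref{EA1}(2) is what closes this gap by matching orders rather than merely matching algebra invariants.
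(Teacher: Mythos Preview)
Your argument is correct and follows essentially the same route as the paper: reduce to the maximal elementary abelian subgroup $\cE_\cE$ via Lemma~\ref{EA1}(1)--(2), match abelian unipotent ranks through $\dim\Pp$, and conclude by comparing orders; for (2), use that $\Rad(k\cE')\subseteq\Rad(k\cE)$ consists of $p$-nilpotent elements and feed the resulting presentation into (1). Two small points: the general truncated-polynomial presentation you invoke for an abelian unipotent group scheme is \cite[(14.4)]{Wa}, not Lemma~\ref{EA1}(3) (whose hypothesis is precisely what you are proving), and your first paragraph in (1) is redundant --- the equality $n=\aurk(\cE)$ already follows directly from the hypothesised presentation together with $\aurk(\cE)=\dim\Pp(\cE)+1$, without passing through a second presentation.
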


\begin{proof} (1) Since $k\cU$ is abelian and local, the group scheme $\cU$ is abelian unipotent and $n=\aurk(\cU)=\dim\Pp(U)\!+\!1$. In view of Lemma \ref{EA1}(2), the map 
$\iota_\ast : \Pp(\cE_\cU)\lra \Pp(\cU)$ is a homeomorphism, so that $\dim \Pp(\cE_{\cU})=\dim\Pp(\cU)$. We thus conclude $\aurk(\cE_\cU)=\aurk(\cU)$, so that Lemma \ref{EA1}(3) 
together with our current assumption implies $\dim_kk\cE_\cU = p^n = \dim_kk\cU$. As a result, the group $\cU=\cE_\cU$ is elementary abelian.

(2) Since $\cE$ is elementary abelian, Lemma \ref{EA1} implies that $x^p =0$ for all $x \in \Rad(k\cE)$. By general theory, there is an isomorphism
\[ k\cE' \cong k[X_1,\ldots,X_r]/(X_1^{p^{n_1}},\ldots,X_r^{p^{n_r}}),\]
where $n_i \ge 1$. As $k\cE$ is abelian, we have $\Rad(k\cE')\subseteq\Rad(k\cE)$, so that $n_i=1$. Part (1) now shows that $\cE'$ is elementary abelian. \end{proof}

\bigskip
\noindent
By general theory, an elementary abelian group scheme $\cE$ is the direct product $\cE=\cE^0\!\times\!\cE_{\rm red}$ of its infinitesimal and reduced parts. By the above, we have
$\cE_{\rm red}\cong E_r$ for some $r\ge 0$.

We turn to the definition of $j$-degrees for modules of constant $j$-rank over an elementary abelian group $\cE$ with $\aurk(\cE)=n$. In view of Lemma \ref{EA1} this implies that
\[ k\cE \cong k[X_1,\ldots,X_n]/(X_1^p,\ldots,X_n^p)\]
is a truncated polynomial ring. Hence the algebra $k\cE$ inherits a $\ZZ$-grading from the polynomial ring in $n$ variables such that $k\cE_{\ge 1} = \Rad(k\cE)$. We consider the linear 
projection $\pr_1 : k\cE_{\ge 1} \lra k\cE_1$. General theory cf.\ \cite[(6.1),(6.4)]{Ca} ensures that
\[ \Pt(\cE) \stackrel{\sim}{\lra} \pr^{-1}(k\cE_1\!\smallsetminus\!\{0\})=\Rad(k\cE)\!\smallsetminus\!\Rad^2(k\cE)\]  
is an open, conical subset of $k\cE_{\ge 1}$. Consequently,
\[ U_\cE:= \{ [x] \in \PP(k\cE_{\ge 1}) \ ;  \ \pr_1(x) \ne 0\}\]
is a dense open subset of the projective space $\PP(k\cE_{\ge 1})\cong \PP^{p^n-2}$. 

\bigskip

\begin{Thm} \label{EA3} Let $M$ be a $k\cE$-module of constant $j$-rank. Then the following statements hold:
\begin{enumerate}
\item The map 
\[ \msIm^j_M : U_\cE \lra \Gr_{\rk^j(M)}(M) \ \ ; \ \ u \mapsto \im(u^j_M)\]
is a homogeneous morphism.
\item If $\aurk(\cE) \ge 2$ and $V \subseteq \Rad(k\cE)$ is a subspace such that $\Rad(k\cE)=V\!\oplus\!\Rad^2(k\cE)$, then $\PP(V) \subseteq U_\cE$, and $\deg(\mspl_M\circ\msIm^j_M)=\deg(\mspl_M\circ\msIm^j_M|_{\PP(V)})$. \end{enumerate} \end{Thm}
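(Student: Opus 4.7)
The plan is to establish part (1) by adapting the construction in the proof of Theorem \ref{Mo1} to the elementary-abelian setting, and to deduce part (2) by applying Corollary \ref{MPS6} with a single $\PP^1$-curve that simultaneously computes both degrees appearing in the statement.

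For part (1), I would begin with the evaluation map $\omega^j : \Rad(k\cE) \lra \End_k(M)$, $x \mapsto x^j_M$. Since $k\cE$ is commutative, $(\alpha x)^j_M = \alpha^j x^j_M$ for $\alpha \in k^\times$, so $\omega^j$ is a homogeneous morphism of conical affine varieties of degree $j$. Setting $d := \rk^j(M)$, the constant $j$-rank assumption together with the identification $\Pt(\cE) = \Rad(k\cE)\smallsetminus\Rad^2(k\cE)$ recalled in the paragraph preceding the theorem gives $\rk(x^j_M) = d$ for every $[x] \in U_\cE$. Assuming $d > 0$ (the case $d = 0$ being trivial, as $\Gr_0(M)$ is a point), Lemma \ref{HM1} produces a morphism $\bar{\omega}^j : U_\cE \lra \PP(\End_k(M))$ whose image lies in $\PP(\End_k(M))_d$. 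Composing with the map $\overline{\im}: \PP(\End_k(M))_d \lra \Gr_d(M)$ from Proposition \ref{Pr2}(2) delivers $\msIm^j_M$; its further composition with the Pl\"ucker embedding $\mspl_M$ is a morphism from an open subset of $\PP(k\cE_{\ge 1})$ to a projective space, whence Lemma \ref{MPS3} forces it to be homogeneous.

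For part (2), first observe that $\Rad(k\cE) = V \oplus \Rad^2(k\cE)$ forces $\pr_1|_V : V \stackrel{\sim}{\lra} k\cE_1$ to be an isomorphism; in particular $\pr_1(v) \ne 0$ for every $v \in V\smallsetminus\{0\}$, showing $\PP(V) \subseteq U_\cE$. For the degree equality, I would exploit $\aurk(\cE) \ge 2$ (so $\dim_k V = n \ge 2$) to pick any non-constant morphism $\omega : \PP^1 \lra \PP(V)$, for example a linear embedding onto a projective line in $\PP(V)$. Because the inclusion $V \hookrightarrow k\cE_{\ge 1}$ is linear, the degree $\ell := \deg(\omega)$ is the same whether $\omega$ is regarded as a morphism to $\PP(V)$ or composed with $\PP(V) \hookrightarrow \PP(k\cE_{\ge 1})$. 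Applying Corollary \ref{MPS6} to the homogeneous morphism $\mspl_M\circ\msIm^j_M : U_\cE \lra \PP(\bigwedge^d M)$ with the curve $\omega$ gives
\[
\deg(\mspl_M\circ\msIm^j_M) \;=\; \deg(\mspl_M\circ\msIm^j_M \circ \omega)/\ell,
\]
while applying Corollary \ref{MPS5}(1) to the composition $\mspl_M\circ\msIm^j_M|_{\PP(V)} \circ \omega : \PP^1 \lra \PP(V) \lra \PP(\bigwedge^d M)$ of morphisms between projective spaces gives
\[
\deg(\mspl_M\circ\msIm^j_M|_{\PP(V)}) \;=\; \deg(\mspl_M\circ\msIm^j_M|_{\PP(V)} \circ \omega)/\ell.
\]
Since $\omega$ factors through $\PP(V)$, the two numerators coincide, and the asserted equality follows.

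The only delicate point is confirming that $\mspl_M\circ\msIm^j_M$ on the open set $U_\cE$ qualifies as a homogeneous morphism in the sense required by Corollary \ref{MPS6}; this is exactly what part (1) provides, via Lemma \ref{MPS3}. No further genericity hypothesis on the complement $V$ is needed, since the common curve $\omega : \PP^1 \lra \PP(V)$ decouples the degree computation from the particular choice of $\Rad^2(k\cE)$-complement.
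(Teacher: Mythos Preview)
Your proof is correct. Part (1) is essentially identical to the paper's argument: factor through $\PP(\End_k(M))_d$ via Proposition \ref{Pr2}, then invoke Lemma \ref{MPS3}.

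For part (2) you take a slightly different route in two places. First, to show $\PP(V)\subseteq U_\cE$, you argue directly that $\ker\pr_1 = k\cE_{\ge 2} = \Rad^2(k\cE)$ forces $\pr_1|_V$ to be injective; the paper instead exhibits an automorphism $\lambda\in\Aut(k\cE)$ carrying the standard complement $\bigoplus kx_i$ onto $V$ and transports $p$-points along $\lambda$. Your argument is shorter and avoids the automorphism entirely. Second, for the degree equality, the paper simply notes that the linear inclusion $\iota:\PP(V)\hookrightarrow\PP(k\cE_{\ge 1})$ has degree $1$ and then applies Corollary \ref{MPS4}: any defining system for $\mspl_M\circ\msIm^j_M$ restricts (via $\iota$) to a defining system on the full projective space $\PP(V)$, hence is automatically reduced there. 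You instead route both computations through a single curve $\omega:\PP^1\to\PP(V)$ and invoke Corollary \ref{MPS6} for $U_\cE$ and Corollary \ref{MPS5}(1) for $\PP(V)$. This works, but is a detour: the paper's direct use of Corollary \ref{MPS4} accomplishes the same thing in one step, since $\PP(V)$ being a full projective space is exactly what makes any defining system reduced.
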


\begin{proof} (1) Let $d:=\rk^j(M)$ and denote by $\varrho : k\cE \lra \End_k(M)$ the representation afforded by $M$. Then 
\[ \omega^j : U_\cE \lra \PP(\End_k(M))_d \ \ ; \ \ [u] \mapsto [\varrho(u)^j]\]
is a morphism and Proposition \ref{Pr2} shows that $\msIm^j_M$ also enjoys this property. Thanks to Lemma \ref{MPS3}, the morphism $\msIm^j_M$ is homogeneous.

(2) By assumption, we have $\dim_kV=\aurk(\cE)=:n$ and $k\cE\cong k[X_1,\ldots,X_n]/(X_1^p,\ldots,X_n^p)$. Setting $x_i:=X_i\!+\!(X_1^p,\ldots,X^p_n)$, we note that there exists an automorphism $\lambda \in \Aut(k\cE)$ such that 
$\lambda(\bigoplus_{i=1}^nkx_i)=V$. Observe that $\alpha \mapsto \lambda\circ\alpha$ defines a bijection $\Pt(\cE)\lra \Pt(\cE)$. Since $(\bigoplus_{i=1}^nkx_i)\!\smallsetminus\!\{0\} \subseteq \Pt(\cE)$,
we conclude that $V\!\smallsetminus\!\{0\} \subseteq \Pt(\cE)$. Consequently, $\PP(V)\subseteq U_\cE$. 

Let $\iota : \PP(V) \lra \PP(k\cE_{\ge 1})$ be the morphism of degree $1$ that is induced by the inclusion $V\subseteq k\cE_{\ge1}$. As $\im \iota \subseteq U_\cE$, Corollary \ref{MPS4} implies
\[ \deg(\mspl_M\circ\msIm^j_M) = \deg(\mspl_M\circ\msIm^j_M\circ\iota),\]
as desired. \end{proof}  

\bigskip

\begin{Definition} Let $\cE$ be an elementary abelian group scheme, $M$ be an $\cE$-module of constant $j$-rank. Then
\[ \deg^j(M):= \deg(\mspl_M\circ\msIm^j_M)\]
is called the {\it $j$-degree} of $M$.\end{Definition}

\bigskip
\noindent
Let $M$ be a module for a finite group scheme $\cG$. Given an automorphism $\lambda \in \Aut(k\cG)$ of the associative $k$-algebra $k\cG$, we consider the 
twisted module $M^{(\lambda)}$, which has underlying $k$-space $M$ and action
\[ a\dact m := \lambda^{-1}(a)m \ \ \ \ \ \ \ \forall \ a \in k\cG, \, m \in M.\]
For future reference we record the following direct consequence of the foregoing result, which shows that the $j$-degree of an $\cE$-module does not depend on the choice
of generators of $k\cE$.

\bigskip

\begin{Cor} \label{EA4} Let $M$ be an $\cE$-module of constant $j$-rank, $\lambda \in \Aut(k\cE)$ be an automorphism. Then $M^{(\lambda)}$ has constant $j$-rank and
\[ \deg^j(M^{(\lambda)}) = \deg^j(M).\] 
\end{Cor}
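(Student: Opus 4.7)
The plan is to realize the twist by $\lambda$ as a linear change of coordinates on the projective space $\PP(k\cE_{\ge 1})$ and to show that the morphism $\mspl_M\circ \msIm^j_M$ is intertwined with this linear change of coordinates. The key formula is $u^j_{M^{(\lambda)}} = \lambda^{-1}(u)^j_M$ for every $u \in k\cE$, which follows at once from the definition of the twisted action and from $\lambda^{-1}$ being an algebra homomorphism.

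First I would address constancy of the rank. Since $\cE$ is abelian unipotent, condition (P2) is automatic, so precomposition with the algebra automorphism $\lambda^{-1}$ yields a bijection $\Pt(\cE)\to\Pt(\cE)$, $\alpha\mapsto \lambda^{-1}\!\circ\alpha$. Together with the identity above this gives $\rk(\alpha(t)^j_{M^{(\lambda)}})=\rk((\lambda^{-1}\!\circ\alpha)(t)^j_M)$ for every $\alpha\in\Pt(\cE)$. As $M$ has constant $j$-rank, all these ranks equal $\rk^j(M)$, proving that $M^{(\lambda)}$ has constant $j$-rank and that $\rk^j(M^{(\lambda)})=\rk^j(M)$.

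Next I would use that $\lambda^{-1}$ preserves the radical $\Rad(k\cE)=k\cE_{\ge 1}$ and its square $\Rad^2(k\cE)=k\cE_{\ge 2}$, because $k\cE$ is local and $\lambda^{-1}$ is an algebra automorphism. Hence $\lambda^{-1}$ restricts to a linear automorphism of $k\cE_{\ge 1}$ and induces a projective linear isomorphism $\Phi_\lambda:\PP(k\cE_{\ge 1})\lra \PP(k\cE_{\ge 1})$ of degree $1$. Since $U_\cE$ is cut out by $\pr_1(x)\ne 0$, equivalently by $x\notin\Rad^2(k\cE)$, and $\lambda^{-1}$ preserves $\Rad^2(k\cE)$, the map $\Phi_\lambda$ restricts to an automorphism of $U_\cE$.

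Finally, the key formula rewrites as $\msIm^j_{M^{(\lambda)}}([u])=\im(\lambda^{-1}(u)^j_M)=\msIm^j_M(\Phi_\lambda([u]))$, so
\[
\mspl_M\circ \msIm^j_{M^{(\lambda)}} \;=\; (\mspl_M\circ \msIm^j_M)\circ \Phi_\lambda
\]
as morphisms on $U_\cE$. Writing $\Phi_\lambda$ in coordinates as a linear substitution $X_i\mapsto L_i$ and picking a reduced defining system $(f_0,\ldots,f_m)$ for $\mspl_M\circ \msIm^j_M$ (homogeneous of degree $\deg^j(M)$ by Lemma \ref{MPS3} and Theorem \ref{EA3}(1)), the composite is defined by $(f_0(L),\ldots,f_m(L))$, which are homogeneous of the same degree, and whose gcd is still $1$ because a linear change of coordinates is a graded ring automorphism of $k[X_0,\ldots,X_{p^n-2}]$. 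Hence $\deg^j(M^{(\lambda)})=\deg^j(M)$. The only mild subtlety is that we are working on the open subset $U_\cE$ rather than all of $\PP(k\cE_{\ge 1})$, but this is harmless: Lemma \ref{MPS3} extends the morphism to a rational map on the ambient projective space, and the invariance of the degree under linear change of coordinates recorded in Remark (2) after the definition of $\deg(\varphi)$ then applies verbatim.
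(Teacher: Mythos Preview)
Your proof is correct and follows essentially the same idea as the paper: both establish constant $j$-rank via the bijection $\alpha\mapsto\lambda^{-1}\!\circ\alpha$ of $\Pt(\cE)$, and both derive the equality of degrees from the fact that $\lambda^{-1}$ induces a degree-$1$ projective automorphism intertwining $\mspl_M\circ\msIm^j_{M^{(\lambda)}}$ with $\mspl_M\circ\msIm^j_M$. The only cosmetic difference is that the paper first invokes Theorem~\ref{EA3}(2) to restrict to a complementary subspace $\PP(V)$ (and its image $\PP(W)=\PP(\lambda^{-1}(V))$) and then applies Corollary~\ref{MPS5}, whereas you work directly on the open set $U_\cE$ and appeal to the invariance of the degree under a linear change of coordinates.
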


\begin{proof} Since $\alpha(t)^j_{M^{(\lambda)}} = (\lambda^{-1}\!\circ\!\alpha)(t)^j_M$ and $\alpha \mapsto \lambda^{-1}\circ \alpha$ is a bijection of $\Pt(\cE)$,
it readily follows that $M^{(\lambda)}$ has constant $j$-rank. For $\aurk(\cE)=1$, there is nothing to be shown. Alternatively, we pick a subspace $V \subseteq 
\Rad(k\cE)$ of dimension $\aurk(\cE)\ge 2$ such that $\Rad(k\cE)=V\!\oplus\!\Rad^2(k\cE)$. Since $\lambda$ is an automorphism, the subspace $W:=\lambda^{-1}(V)$ enjoys the same 
property. Moreover, $\lambda^{-1}$ induces a morphism $\lambda^{-1} : \PP(V) \lra \PP(W)$ of degree $1$. Now Theorem \ref{EA3} in conjunction with Corollary \ref{MPS5} yields
\begin{eqnarray*} 
\deg^j(M^{(\lambda)}) &=& \deg(\mspl_M\circ\msIm^j_{M^{(\lambda)}}|_{\PP(V)}) = \deg(\mspl_M\circ\msIm^j_M|_{\PP(W)}\circ \lambda^{-1}) = \deg(\mspl_M\!\circ\!\msIm^j_M|_{\PP(W)})\\ 
& = &\deg^j(M),
\end{eqnarray*}
as desired. \end{proof}

\bigskip 

\begin{Thm} \label{EA5} Let $\cE$ be an elementary abelian group scheme. If $M$ is an $\cE$-module of constant $j$-rank, then 
\[ \deg^j(M) + \deg^j(M^\ast) = j\rk^j(M).\]
\end{Thm}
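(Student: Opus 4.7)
The plan is to bootstrap the statement from the restricted Lie algebra analogue, Theorem~\ref{RDF2}, via the identification $k\cE\cong U_0(\fe_n)$ where $n=\aurk(\cE)$. When $n\le 1$ the assertion is essentially degenerate, so I will concentrate on the case $n\ge 2$. By Lemma~\ref{EA1}(3), there is a $k$-algebra isomorphism $k\cE\cong k[X_1,\ldots,X_n]/(X_1^p,\ldots,X_n^p)\cong U_0(\fe_n)$, with $\fe_n$ the $n$-dimensional elementary abelian restricted Lie algebra. Let $V\subseteq\Rad(k\cE)$ be the image of $\fe_n$, so that $V$ is a complement of $\Rad^2(k\cE)$ in $\Rad(k\cE)$. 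Then the restriction $\msIm^j_M|_{\PP(V)}$ is literally the Lie-algebra morphism $\msim^j_{M|_{\fe_n}}$, and Theorem~\ref{EA3}(2) gives
\[
\deg^j_\cE(M)=\deg^j_{\fe_n}(M|_{\fe_n}).
\]
The same argument yields $\rk^j_\cE(M)=\rk^j_{\fe_n}(M|_{\fe_n})$, since each nonzero $v\in V$ gives rise to a $p$-point of $\cE$ (flatness being automatic in a truncated polynomial ring).

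The delicate part is the analogue for $M^\ast$. Let $\eta_\cE$ be the Hopf antipode of $k\cE$ and let $\eta_\fe$ be the antipode of $U_0(\fe_n)$, transported to $k\cE$ via the algebra isomorphism. Both are $k$-algebra automorphisms of $k\cE$ (since $k\cE$ is commutative), but they typically disagree when $\cE$ has a reduced component: on such a factor $\eta_\cE$ sends a group-like element $g$ to $g^{-1}$, whereas $\eta_\fe$ treats the corresponding generator $X_i=g-1$ as primitive. A direct check of the resulting actions shows that, with $\lambda:=\eta_\cE\circ\eta_\fe\in\Aut(k\cE)$, the $\cE$-module $M^\ast$ is isomorphic to the twist $(M^\vee)^{(\lambda)}$, where $M^\vee$ denotes the $U_0(\fe_n)$-dual of $M$ (viewed as a $k\cE$-module via the algebra isomorphism). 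Since $\eta_\fe$ permutes $\Pt(\cE)$, the module $M^\vee$ again has constant $j$-rank as an $\cE$-module, so Corollary~\ref{EA4} gives $\deg^j_\cE(M^\ast)=\deg^j_\cE(M^\vee)$, and applying the first paragraph to $M^\vee$ yields $\deg^j_\cE(M^\vee)=\deg^j_{\fe_n}(M^\vee)$.

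Assembling these identifications, Theorem~\ref{RDF2} applied to the $U_0(\fe_n)$-module $M|_{\fe_n}$---valid because $V(\fe_n)=\fe_n$ is itself a subspace of $\fe_n$---produces
\[
\deg^j_{\fe_n}(M|_{\fe_n})+\deg^j_{\fe_n}(M^\vee)=j\,\rk^j_{\fe_n}(M|_{\fe_n}),
\]
which is precisely the desired identity $\deg^j(M)+\deg^j(M^\ast)=j\rk^j(M)$. The principal obstacle is the one confronted in the second paragraph: the identification $k\cE\cong U_0(\fe_n)$ is not an isomorphism of Hopf algebras, so the $\cE$-dual and the $U_0(\fe_n)$-dual of $M$ do not agree on the nose. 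Absorbing the discrepancy into a $k$-algebra automorphism of $k\cE$ and invoking the automorphism-invariance of $\deg^j$ recorded in Corollary~\ref{EA4} is exactly what closes the gap and permits the reduction to Theorem~\ref{RDF2}.
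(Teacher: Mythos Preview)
Your proposal is correct and follows essentially the same route as the paper's own proof: identify $k\cE\cong U_0(\fe_n)$ as associative algebras, use Theorem~\ref{EA3}(2) to match the $j$-degree of $M$ with its Lie-algebraic counterpart, absorb the discrepancy between the two duals into the automorphism $\lambda=\eta_\cE\circ\eta_{\fe_n}$, invoke Corollary~\ref{EA4} to remove the twist, and then apply Theorem~\ref{RDF2}. Your write-up is in fact somewhat more explicit than the paper's (you spell out why the two antipodes differ and why $M^\vee$ retains constant $j$-rank), but the argument is the same.
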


\begin{proof} Setting $r:= \aurk(\cE)$, we observe that Lemma \ref{EA1} yields $k\cE = U_0(\fe_r)$, where $\Rad(k\cE)=\fe_r\oplus\Rad^2(k\cE)$. We denote by
$\eta_\cE$ and $\eta_{\fe_r}$ the antipodes of $k\cE$ and $U_0(\fe_r)$, respectively. Direct computation shows that
\[ M^\ast \cong (M^\sharp)^{(\lambda)},\]
where $M^\sharp$ is the dual of $M$ as a $U_0(\fe_r)$-module and $\lambda:=\eta_\cE\circ\eta_{\fe_r}$. 

By virtue of Theorem \ref{EA3}, we obtain
\[ \deg^j(M) = \deg(\mspl_M\circ \msIm^j_M|_{\PP(\fe_r)}) = \deg^j(\mspl_M\circ\msim^j_M),\]
implying that the $j$-degree of $M$ as a $k\cE$-module coincides with that of the $U_0(\fe_r)$-module $M$. A consecutive application of Theorem \ref{RDF2} and Corollary \ref{EA4} thus
yields
\[ j\rk^j(M) = \deg^j(M)+\deg^j(M^\sharp) = \deg^j(M)+\deg^j((M^\sharp)^{(\lambda)}) = \deg^j(M)+\deg^j(M^\ast),\]
as desired. \end{proof}

\bigskip
\noindent
Our final results of this section show that degrees of modules may be computed on elementary abelian groups of rank $2$.

\bigskip

\begin{Cor} \label{EA6} Let $\cE$ be an elementary abelian group scheme, $\cE'\subseteq\cE$ be a subgroup scheme such that $\aurk(\cE')\ge 2$. If $M$ is an $\cE$-module of constant
$j$-rank, then $\deg^j(M)=\deg^j(M|_{\cE'})$. \end{Cor}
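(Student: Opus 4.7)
The plan is to mimic the reduction used in Theorem \ref{DGP2}, but now relative to the ``good'' subspaces provided by Theorem \ref{EA3}(2). By Corollary \ref{EA2}(2), $\cE'$ is itself elementary abelian. The inclusion $k\cE' \hookrightarrow k\cE$ is a flat extension of commutative Hopf algebras (indeed $k\cE$ is free over $k\cE'$), so composition with this inclusion sends flat $p$-points of $\cE'$ to flat $p$-points of $\cE$, i.e.\ $\Pt(\cE')\subseteq \Pt(\cE)$ under the obvious identification. Since $M$ has constant $j$-rank over $\cE$, it follows that $M|_{\cE'}$ has constant $j$-rank equal to $\rk^j(M)$, so $\deg^j(M|_{\cE'})$ is defined.

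The heart of the argument is to choose compatible subspaces. Pick $V' \subseteq \Rad(k\cE')$ with $\Rad(k\cE') = V' \oplus \Rad^2(k\cE')$, so that $\dim V' = \aurk(\cE')\ge 2$. The key step is to check that $V' \cap \Rad^2(k\cE) = 0$. For this I invoke the characterization recalled just before Theorem \ref{EA3}, under which $p$-points of an elementary abelian group scheme $\cE$ correspond precisely to elements of $\Rad(k\cE) \smallsetminus \Rad^2(k\cE)$. A nonzero $v \in V'$ is outside $\Rad^2(k\cE')$, hence a $p$-point of $\cE'$, hence a $p$-point of $\cE$ by the previous paragraph, hence outside $\Rad^2(k\cE)$. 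Consequently, $V'$ maps injectively into $\Rad(k\cE)/\Rad^2(k\cE)$, and I can extend it to $V \subseteq \Rad(k\cE)$ with $\Rad(k\cE) = V \oplus \Rad^2(k\cE)$ and $V' \subseteq V$.

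Now Theorem \ref{EA3}(2), applied to both $(\cE,V)$ and $(\cE',V')$, gives
\[ \deg^j(M) = \deg(\mspl_M \circ \msIm^j_M|_{\PP(V)}) \quad \text{and} \quad \deg^j(M|_{\cE'}) = \deg(\mspl_M \circ \msIm^j_{M|_{\cE'}}|_{\PP(V')}). \]
For $u \in V' \subseteq k\cE'$ the operator $u^j_M$ depends only on the restricted module structure, so $\msIm^j_M|_{\PP(V')} = \msIm^j_{M|_{\cE'}}|_{\PP(V')}$. Since $V' \subseteq V$ is a linear embedding, the induced inclusion $\iota : \PP(V') \hookrightarrow \PP(V)$ is a morphism of degree $1$. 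Because $\dim \PP(V') \ge 1$, Corollary \ref{MPS5} (together with Corollary \ref{MPS6}, to handle the restriction from $\PP(V)$ along $\iota$, exactly as in the proof of Theorem \ref{DGP2}) yields
\[ \deg^j(M|_{\cE'}) = \deg(\mspl_M \circ \msIm^j_M|_{\PP(V)} \circ \iota) = \deg(\mspl_M \circ \msIm^j_M|_{\PP(V)})\cdot \deg(\iota) = \deg^j(M). \]

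The main obstacle is the second paragraph: ensuring that $V' \cap \Rad^2(k\cE) = 0$ so that one can coherently extend $V'$ to a complement $V$ of $\Rad^2(k\cE)$ in $\Rad(k\cE)$. Everything else is a routine transfer of the argument of Theorem \ref{DGP2} to the elementary-abelian-group-scheme setting via Theorem \ref{EA3}(2).
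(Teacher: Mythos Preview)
Your proof is correct and follows essentially the same route as the paper's own argument: both pick a complement $V'$ of $\Rad^2(k\cE')$ in $\Rad(k\cE')$, use the $p$-point characterization together with freeness of $k\cE$ over $k\cE'$ to show $V'\cap\Rad^2(k\cE)=(0)$, extend $V'$ to a complement $V$ of $\Rad^2(k\cE)$ in $\Rad(k\cE)$, and then invoke Theorem~\ref{EA3}(2) and Corollary~\ref{MPS5}. The only cosmetic differences are your explicit observation that $M|_{\cE'}$ inherits constant $j$-rank (which the paper leaves implicit) and your mention of Corollary~\ref{MPS6}, which is not actually needed here since all the relevant morphisms have full projective spaces as domain and Corollary~\ref{MPS5} applies directly.
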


\begin{proof} According to Corollary \ref{EA2}(2), the group $\cE'$ is elementary abelian. 

Now let $V \subseteq \Rad(k\cE')$ be a subspace such that $\Rad(k\cE')=V\!\oplus\!\Rad^2(k\cE')$. If $x \in V\!\smallsetminus\!\{0\}$, then $\alpha_x : \fA_p \lra k\cE'$ is a $p$-point (cf. \cite[\S 6]{Ca}). 
As $k\cE$ is a free $k\cE'$-module, the composite $\iota\circ\alpha_x$ of $\alpha_x$ and the natural inclusion $\iota : k\cE'\lra k\cE$ is a $p$-point. Thus, \cite[\S 6]{Ca} implies that $x \in \Rad(k\cE)\!
\smallsetminus\! \Rad^2(k\cE)$. Hence $V\cap \Rad^2(k\cE)=(0)$, and there exists a subspace $V\subseteq W\subseteq \Rad(k\cE)$ such that $\Rad(k\cE)=W\!\oplus\!\Rad^2(k\cE)$. Since $\dim_kV=
\aurk(\cE')\ge 2$, we may apply Theorem \ref{EA3} and Corollary \ref{MPS5} to arrive at
\[ \deg^j(M)= \deg(\mspl_M\circ\msIm^j_M|_{\PP(W)}) = \deg(\mspl_M\circ\msIm^j_M|_{\PP(V)})= \deg(\mspl_M\circ\msIm^j_{M|_{\cE'}}|_{\PP(V)})=\deg^j(M|_{\cE'}),\]
as desired. \end{proof}

\bigskip
\noindent
We denote by $Z(\cG)$ the {\it center} of the finite group scheme $\cG$, cf.\ \cite[(I.2.6)]{Ja}.

\bigskip

\begin{Cor} \label{EA7} Let $\cG$ be a finite group scheme, $M$ be a $\cG$-module of constant $j$-rank. 
\begin{enumerate}
\item If $\cE,\cE'$ are elementary abelian subgroups such that $\aurk(\cE\cap\cE')\ge 2$, then $\deg^j(M|_{\cE})=\deg^j(M|_{\cE'})$. 
\item If $\aurk(Z(\cG))\ge 2$, then there exists $d \in \NN_0$ such that $\deg^j(M|_\cE)=d$ for every elementary abelian subgroup $\cE\subseteq \cG$
such that $\aurk(\cE)\ge 2$. \end{enumerate}\end{Cor}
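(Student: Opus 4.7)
The idea is to reduce both sides to a common restriction. The scheme-theoretic intersection $\cE \cap \cE'$ is a subgroup scheme of the elementary abelian group $\cE$, hence itself elementary abelian by Corollary \ref{EA2}(2). Since $M$ has constant $j$-rank over $\cG$ and this property is inherited by restriction to any subgroup scheme (the composition of a $p$-point of a subgroup with the inclusion into $\cG$ is again a $p$-point of $\cG$), each of $M|_\cE$, $M|_{\cE'}$, and $M|_{\cE\cap\cE'}$ is of constant $j$-rank. The hypothesis $\aurk(\cE \cap \cE') \ge 2$ now makes Corollary \ref{EA6} applicable to both inclusions $\cE \cap \cE' \subseteq \cE$ and $\cE \cap \cE' \subseteq \cE'$, yielding
\[
\deg^j(M|_\cE) \;=\; \deg^j(M|_{\cE\cap\cE'}) \;=\; \deg^j(M|_{\cE'}).
\]

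\textbf{Plan for (2).} First I produce a central elementary abelian $\cE_0$ of sufficient rank. The hypothesis $\aurk(Z(\cG)) \ge 2$ supplies an abelian unipotent $\cU \subseteq Z(\cG)$ with $\dim \Pp(\cU)+1 \ge 2$; Lemma \ref{EA1}(1),(2) then provides its unique maximal elementary abelian subgroup $\cE_0 := \cE_\cU$, and the homeomorphism $\Pp(\cE_0) \to \Pp(\cU)$ gives $\aurk(\cE_0) \ge 2$. Crucially, $\cE_0 \subseteq Z(\cG)$.

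Next, for an arbitrary elementary abelian $\cE \subseteq \cG$ with $\aurk(\cE) \ge 2$, I use centrality of $\cE_0$: the product $\cE \cdot \cE_0$ is a well-defined finite abelian unipotent subgroup scheme of $\cG$. Invoking Lemma \ref{EA1}(1) again, $\cE \cdot \cE_0$ contains a unique maximal elementary abelian subgroup $\tilde{\cE}$, and by this maximality both $\cE$ and $\cE_0$ lie inside $\tilde{\cE}$. Two applications of part (1) --- first to the pair $(\cE,\tilde{\cE})$, whose intersection is $\cE$ and has $\aurk \ge 2$, and then to $(\cE_0,\tilde{\cE})$ --- give
\[
\deg^j(M|_\cE) \;=\; \deg^j(M|_{\tilde{\cE}}) \;=\; \deg^j(M|_{\cE_0}),
\]
so $d := \deg^j(M|_{\cE_0})$ is the desired constant.

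\textbf{Where the content sits.} The only non-bookkeeping step is verifying that $\cE \cdot \cE_0$ is an abelian unipotent subgroup scheme: this rests on centrality of $\cE_0$ (so that the image of the multiplication map $\cE \times \cE_0 \to \cG$ really is a subgroup scheme), on abelianness of both factors, and on the fact that a product of two commuting unipotent subgroup schemes is unipotent. Everything else is a direct assembly of Corollary \ref{EA6}, Lemma \ref{EA1}, and part (1).
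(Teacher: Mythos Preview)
Your proof is correct and follows essentially the same route as the paper's. For (1) both proofs simply apply Corollary \ref{EA6} to the two inclusions $\cE\cap\cE'\subseteq\cE$ and $\cE\cap\cE'\subseteq\cE'$. For (2) there is a small simplification you miss: since $\cE$ and $\cE_0$ are both elementary abelian and $\cE_0$ is central, the product $\cE\cE_0$ is itself elementary abelian directly from the definition (it is an abelian product of the generating subgroups of $\cE$ and of $\cE_0$), so there is no need to pass to a maximal elementary abelian subgroup $\tilde{\cE}$ inside it---one can apply Corollary \ref{EA6} straight to the inclusions $\cE\subseteq\cE\cE_0$ and $\cE_0\subseteq\cE\cE_0$. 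Your detour through $\tilde{\cE}$ and part (1) is harmless (indeed $\tilde{\cE}=\cE\cE_0$), just slightly longer.
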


\begin{proof} (1) In view of Corollary \ref{EA6} and our current assumption, we have $\deg^j(M|_{\cE})=\deg^j(M|_{\cE\cap\cE'})=\deg^j(M|_{\cE'})$. 

(2) By assumption, there exists an elementary abelian subgroup $\cE_0 \subseteq Z(\cG)$ such that $\aurk(\cE_0)\ge 2$. Let $\cE\subseteq\cG$ be
elementary abelian of rank $\aurk(\cE)\ge 2$. Then $\cE\cE_0$ is elementary abelian, and a two-fold application of Corollary \ref{EA6} gives
\[ \deg^j(M|_{\cE}) = \deg^j(M|_{\cE\cE_0}) = \deg^j(M|_{\cE_0}),\]
so that the left-hand value does not depend on the choice of $\cE$. \end{proof}

\bigskip

\begin{Remark} Let $\cG$ be a finite group scheme and let $\fE(2\!\uparrow,\cG)$ be the set of elementary abelian subgroups of $\cG$ such that $\aurk(\cE)\ge 2$. We endow this set
with the structure of a graph by postulating that two elements $\cE,\cE'$ are linked by a bond, whenever $\aurk(\cE\cap\cE')\ge 2$. The above arguments show that the conclusion
of Corollary \ref{EA7}(2) holds whenever $\fE(2\!\uparrow,\cG)$ is connected. \end{Remark} 

\bigskip
\noindent
Since the category $\EIP(\cG)$ of equal images modules is closed under images and direct sums, every $\cG$-module $M$ possesses a unique largest submodule $\fK(M)$ belonging to $\EIP(\cG)$, the so-called {\it generic kernel} of $M$. 

\bigskip

\begin{Cor} \label{EA8} Let $\cE$ be an elementary abelian group scheme such that $\aurk(\cE)=2$. If $M$ is an $\cE$-module of constant rank, then
\[ \deg(M)=\dim_kM/\fK(M).\]
\end{Cor}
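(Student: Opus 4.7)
By Lemma~\ref{EA1}(3) together with $\aurk(\cE)=2$, $k\cE\cong U_0(\fe_2)$ as associative algebras; via Corollary~\ref{EA4} and Theorem~\ref{EA3}, I may regard $M$ as a $U_0(\fe_2)$-module of constant rank $d$ and compute $\deg(M)$ on the projective line $\PP(V(\fe_2))=\PP^1$. The strategy is to apply Lemma~\ref{DGP4}(4) to the short exact sequence
\[
0\lra\fK(M)\lra M\lra M/\fK(M)\lra 0.
\]
Since $\fK(M)$ has the equal images property, $\msim_{\fK(M)}$ is constant and $\deg(\fK(M))=0$, so it suffices to prove
\[
\sum_{\alpha\in\fe_2\smallsetminus\{0\}}\ker f_\alpha\subseteq\fK(M).
\]

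Introduce the common image $W:=\bigcap_{\alpha}\im f_\alpha$, which is a submodule of $M$ because $\fe_2$ is abelian (if $w=f_\alpha(m_\alpha)$ then $\gamma w=f_\alpha(\gamma m_\alpha)\in\im f_\alpha$), and set $\tilde{\fK}:=\{m\in M:\fe_2 m\subseteq W\}$. The decisive geometric input is a closedness argument on $\PP^1$: for fixed $v\in M$, the set $\{[\alpha]\in\PP^1:v\in\im f_\alpha\}$ is the preimage, under the morphism $\msim_M:\PP^1\to\Gr_d(M)$ of Corollary~\ref{RLA1}, of the closed Schubert subvariety $\{V\in\Gr_d(M):v\in V\}\subseteq\Gr_d(M)$, hence is closed in $\PP^1$; whenever it contains a nonempty open subset it must coincide with $\PP^1$, which then forces $v\in W$.

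Applied to $m\in\ker y_M$, the identity $f_{[s:t]}m=s\cdot xm$ gives $xm\in\im f_{[s:t]}$ for every $s\ne 0$, hence for all $[s:t]\in\PP^1$, so $xm\in W$. For general $m\in\ker f_\alpha$ with $\alpha=ax+by$, the identity $xm=-(b/a)\,ym$ (when $a\ne 0$) reduces to the same argument applied to $k\cdot ym$ and yields $\fe_2 m\subseteq W$; consequently $\sum_\alpha\ker f_\alpha\subseteq\tilde{\fK}$. The same closedness argument, applied to $M/W$---which has constant rank $d-\dim W$ and satisfies $\bigcap_\alpha\im f_\alpha|_{M/W}=0$---yields $\ker x|_{M/W}=\ker y|_{M/W}$, so $M/W$ has the equal kernels property.

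Finally, for $\gamma\in\fe_2\smallsetminus\{0\}$ and $m\in M$ with $f_\gamma m\in W$, the class $\bar m\in M/W$ belongs to $\ker\gamma|_{M/W}=\ker\delta|_{M/W}$ for every $\delta$ by EKP, so $\fe_2 m\subseteq W$ and $m\in\tilde{\fK}$; thus $f_\gamma^{-1}(W)=\tilde{\fK}$, and combined with $W\subseteq\im f_\gamma$ this forces $f_\gamma(\tilde{\fK})=W$ independent of $\gamma$, exhibiting $\tilde{\fK}$ as an equal images submodule. Hence $\tilde{\fK}\subseteq\fK(M)$, and the inclusion chain $\sum_\alpha\ker f_\alpha\subseteq\tilde{\fK}\subseteq\fK(M)$ together with Lemma~\ref{DGP4}(4) delivers $\deg(M)=\deg(\fK(M))+\dim_k M/\fK(M)=\dim_k M/\fK(M)$. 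The main obstacle throughout is the closedness step, which relies essentially on $\aurk(\cE)=2$: a closed subset of the one-dimensional space $\PP^1$ that contains a nonempty open subset must equal $\PP^1$.
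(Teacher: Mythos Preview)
Your argument is correct and follows the same overall route as the paper: reduce to $U_0(\fe_2)$, apply Lemma~\ref{DGP4}(4) to the short exact sequence $(0)\to\fK(M)\to M\to M/\fK(M)\to(0)$, and use $\deg(\fK(M))=0$. The sole difference is in establishing the key hypothesis $\sum_{x\in\fe_2\smallsetminus\{0\}}\ker x_M\subseteq\fK(M)$.

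The paper obtains this inclusion in one line by citing \cite[(7.6)]{CFS}. You instead supply a self-contained proof: you introduce $W=\bigcap_\alpha\im\alpha_M$ and $\tilde{\fK}=\{m:\fe_2 m\subseteq W\}$, use the closedness of $\{[\alpha]\in\PP^1:v\in\im\alpha_M\}$ (as the $\msim_M$-preimage of a Schubert cell) together with the irreducibility of $\PP^1$ to show $\sum_\alpha\ker\alpha_M\subseteq\tilde{\fK}$, then verify that $\tilde{\fK}$ has the equal images property by showing $M/W$ has equal kernels and $\alpha_M^{-1}(W)=\tilde{\fK}$ for every $\alpha$. This is essentially a direct reproof of the relevant part of \cite[(7.6)]{CFS}, trading a citation for a page of argument. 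The benefit is that your proof is internal to the paper and makes the role of $\aurk(\cE)=2$ completely transparent (it is precisely what makes the ``closed with nonempty interior implies everything'' step on $\PP^1$ work); the cost is length. One small point worth making explicit: you show $\im\gamma|_{\tilde{\fK}}=W$ for all $\gamma$, which is constancy of $\msim_{\tilde{\fK}}$; to conclude $\tilde{\fK}\in\EIP(\fe_2)$ (and hence $\tilde{\fK}\subseteq\fK(M)$) you should invoke Corollary~\ref{LASN3}(2).
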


\begin{proof} We have $k\cE \cong U_0(\fe_2)$, where $\Rad(k\cE)=\fe_2\!\oplus\!\Rad^2(k\cE)$. It thus follows from Theorem \ref{EA3}(2) that $\deg(M)$ coincides with the degree
of $M$ as a $U_0(\fe_2)$-module. We consider the exact sequence
\[ (0) \lra \fK(M) \lra M \lra M/\fK(M) \lra (0).\]
As $M$ has constant rank, \cite[(7.6)]{CFS} implies $\sum_{x \in \fe_2\smallsetminus\{0\}}\ker x_M \subseteq \fK(M)$. The equal images module $\fK(M)$ has 
degree $\deg(\fK(M))=0$, and Lemma \ref{DGP4}(4) yields $\deg(M)=\deg(\fK(M))\!+\!\dim_kM/\fK(M)= \dim_kM/\fK(M)$. \end{proof}

\bigskip

\subsection{Self-dual modules}
Let $\alpha \in \Pt(\cG)$ be a $p$-point. If $M$ is a $\cG$-module, then the isomorphism class 
\[ \Jt(M,\alpha) = [\alpha^\ast(M)]\]
of the $\fA_p$-module $\alpha^\ast(M)$ is the {\it Jordan type of $M$ relative to $\alpha$}. We denote by
\[ \Jt(M) := \{\Jt(M,\alpha) \ ; \ \alpha \in \Pt(\cG)\}\]
the finite set of Jordan types of $M$ and say that $M$ has {\it constant Jordan type} if $|\Jt(M)|=1$. Since a $\cG$-module has constant Jordan type if and only if it has constant $j$-rank
for all $j \in \{1,\ldots,p\!-\!1\}$, Lemma \ref{FG1} ensures that our present definition is compatible with the earlier one.

\bigskip

\begin{Thm} \label{FG4} Let $\cG$ be a finite group scheme such that $\aurk(\cG)\ge 2$. If $M$ is a self-dual $\cG$-module, then the following statements hold:
\begin{enumerate}
\item If $M$ has constant $j$-rank, then $\rk^j(M)\equiv 0 \modd(2)$, whenever $j \equiv 1\modd(2)$.
\item If $M$ has constant Jordan type $\Jt(M)=\bigoplus_{i=1}^pa_i[i]$, then $a_i\equiv 0 \ \modd(2)$ whenever $i\equiv 0 \ \modd(2)$. \end{enumerate}\end{Thm}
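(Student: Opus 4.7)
The plan is to mirror the proof of Theorem \ref{SD1} closely, using Lemma \ref{FG4} to reduce from an arbitrary finite group scheme $\cG$ with $\aurk(\cG)\ge 2$ to a well-chosen elementary abelian subgroup, and then invoking the rank-degree formula for elementary abelian group schemes (Theorem \ref{EA5}) instead of the restricted-Lie-algebra version (Theorem \ref{RDF2}).

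First I would apply Lemma \ref{FG4} to obtain a subgroup scheme $\cE\subseteq \cG$ isomorphic to one of $\GG_{a(2)}$, $\GG_{a(1)}\!\times\!\GG_{a(1)}$, $\GG_{a(1)}\!\times\!E_1$ or $E_2$. In all four cases $\cE$ is elementary abelian in the sense of Section \ref{S:EA} and satisfies $\aurk(\cE)=2$, so Theorem \ref{EA5} is applicable to any $\cE$-module of constant $j$-rank. Set $N:=M|_{\cE}$; since duality commutes with restriction, $N$ is self-dual. Furthermore, since $k\cG$ is free over $k\cE$, composition with the inclusion $k\cE\hookrightarrow k\cG$ sends $p$-points of $\cE$ to $p$-points of $\cG$, so any constant-rank or constant-Jordan-type hypothesis for $M$ descends to $N$ with the same invariants: $\rk^j(N)=\rk^j(M)$ and $\Jt(N)=\Jt(M)$.

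For part (1), assume $M$ (hence $N$) has constant $j$-rank. Theorem \ref{EA5} applied to $N$ gives
\[ \deg^j(N)+\deg^j(N^\ast)=j\rk^j(N),\]
and $N\cong N^\ast$ yields $2\deg^j(N)=j\rk^j(M)$. If $j$ is odd, the right-hand side must be even, forcing $\rk^j(M)\equiv 0 \modd (2)$.

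For part (2), assume $M$ has constant Jordan type $\Jt(M)=\bigoplus_{i=1}^p a_i[i]$; then $M$ has constant $j$-rank for every $j$, and the standard telescoping identity
\[ a_j=\rk^{j-1}(M)-2\rk^j(M)+\rk^{j+1}(M)\qquad (\rk^0(M):=\dim_k M,\ \rk^p(M)=\rk^{p+1}(M):=0)\]
holds. For even $j$, both $j-1$ and $j+1$ are odd, so by (1) both $\rk^{j-1}(M)$ and $\rk^{j+1}(M)$ are even, forcing $a_j\equiv 0 \modd(2)$. The only subtlety — and hence the main thing to verify carefully — is the reduction step: that Lemma \ref{FG4} genuinely produces an elementary abelian subgroup of rank $\ge 2$ in every case (which is clear from its statement) and that restriction preserves constant $j$-rank with the same generic rank (which follows from flatness of $k\cG$ over $k\cE$, plus the observation that $\alpha\mapsto \iota\circ\alpha$ sends $\Pt(\cE)$ into $\Pt(\cG)$).
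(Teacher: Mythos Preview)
Your proposal is correct and follows essentially the same route as the paper: reduce via Lemma~\ref{FG4} to an elementary abelian subgroup $\cE$ of rank $2$, restrict $M$ to $\cE$, and apply the rank--degree formula for elementary abelian group schemes (Theorem~\ref{EA5}) to the self-dual module $N=M|_\cE$. The paper's proof is terser (it simply asserts the existence of such an $\cE$ and that $N$ is self-dual of the same constant $j$-rank), while you spell out why Lemma~\ref{FG4} supplies an elementary abelian $\cE$ with $\aurk(\cE)=2$ and why restriction along the flat inclusion $k\cE\hookrightarrow k\cG$ preserves constant $j$-rank with the same value; these extra justifications are accurate and in the spirit of the paper's framework.
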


\begin{proof} By assumption, there exists an elementary abelian subgroup of abelian unipotent rank $\ge 2$. We consider the self-dual $\cE$-module $N:=M|_\cE$.

Let $j \in \{1,\ldots,p\!-\!1\}$ be odd. If $M$ has constant $j$-rank, then $N$ has constant $j$-rank $\rk^j(N)=\rk^j(M)$, and assertion (1) follows from Theorem \ref{EA5}. The second statement
is a direct consequence of (1) and the formula $a_j = \rk^{j-1}(N)\!-\!2\rk^j(N)\!+\!\rk^{j+1}(N)$.\end{proof}

\bigskip

\section{The equal images property} \label{S:EIP}
We provide further applications of our morphisms $\msim^j_M$ by establishing conditions for a module of constant rank to have the equal images property. Our starting point is the following observation
concerning restricted Lie algebras:

\bigskip

\begin{Proposition} \label{EIP1} Let $\fg$ be a $p$-trivial restricted Lie algebra, $M$ be a $U_0(\fg)$-module of constant rank. If there exist linearly independent elements $x,y \in \fg\!\smallsetminus\!\{0\}
$ such that $\im x_M = \im y_M$, then $M$ has the equal images property. \end{Proposition}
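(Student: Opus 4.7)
The plan is to reduce this to Corollary \ref{LASN3} (which appeared in Section \ref{S:MRLA}). Since $\fg$ is $p$-trivial, we have $V(\fg)=\fg$, so $V(\fg)$ is trivially a subspace and the hypotheses of Corollary \ref{LASN3} are in force. The module $M$ of constant rank $d:=\rk(M)$ therefore gives rise to a morphism
\[ \msim_M : \PP(\fg) \lra \Gr_d(M) \ \ ; \ \ [z] \mapsto \im z_M.\]

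The hypothesis supplies linearly independent elements $x,y\in\fg\!\smallsetminus\!\{0\}$ with $\im x_M=\im y_M$, i.e.\ $\msim_M([x])=\msim_M([y])$ while $[x]\neq [y]$ in $\PP(\fg)$. This means $\msim_M$ is not injective. By Corollary \ref{LASN3}(1), the map $\msim_M$ is then forced to be constant.

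With $\msim_M$ constant, Corollary \ref{LASN3}(2) immediately yields $M \in \EIP(\fg)$, which is what we want.

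There is no real obstacle: the statement is essentially the contrapositive packaging of the dichotomy in Corollary \ref{LASN3}(1) (\emph{injective or constant}) combined with part (2) (\emph{constant implies equal images}). The substantive input — that injectivity fails globally as soon as two lines are collapsed, which in turn forces constancy via Corollary \ref{MPS5}(2) applied on $\PP(kx\!\oplus\!ky)\cong\PP^1$ — is already encoded in the proof of Theorem \ref{LASN2}(1), and the passage from constancy to the equal images property for higher $j$ uses the non-triviality of the center of the nilpotent Lie algebra $\fg$, which also has been dealt with.
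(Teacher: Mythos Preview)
Your proof is correct and essentially identical to the paper's own argument: the paper also observes that $\msim_M$ is not injective and then invokes Corollary \ref{LASN3} (parts (1) and (2)) to conclude that $M\in\EIP(\fg)$. Your added commentary on where the substantive input lies is accurate but goes beyond what the paper records.
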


\begin{proof} By assumption, the map $\msim_M : \PP(\fg) \lra \Gr_{\rk(M)}(M)$ is not injective. The assertion thus follows from Corollary \ref{LASN3}(2). \end{proof}

\bigskip

\begin{Remark} Suppose that $r\ge 2$. If $M$ is a $U_0(\fe_r)$-module of constant rank such that $M|_{\fe_2}$ has the equal images property, then Proposition \ref{EIP1} implies that $M$ has the 
equal images property. In view of Corollary \ref{EA6}, an analogous result holds for elementary abelian group schemes. \end{Remark} 

\bigskip
\noindent
Recall that $\GG_{a(r)}$ denotes the $r$-th Frobenius kernel of the additive group $\GG_a = {\rm Spec}(k[T])$. Thus, $k[\GG_{a(r)}] = k[T]/(T^{p^r})$ is generated by the primitive element $t:= T\!+\!(T^{p^r})$. If $u_0,\ldots, u_{r-1} \in k\GG_{a(r)}$ are given by $u_i(t^j):=\delta_{p^i,j}$, then the map $X_i \mapsto u_i$ that is defined on the indeterminates $X_0,\ldots,X_{r-1}$ over $k$ induces an isomorphism
\[ k[X_0,\ldots,X_{r-1}]/(X_0^p,\ldots,X_{r-1}^p) \stackrel{\sim}{\lra} k\GG_{a(r)}.\]
Given a finite group scheme $\cG$ and $r \in \NN$, we denote by $\cG_r$ the {\it $r$-th Frobenius kernel} of $\cG$. By definition, $\cG_r$ is the scheme-theoretic kernel of the $r$-th iterate
$F^r : \cG \lra \cG^{(r)}$ of the Frobenius homomorphism $F : \cG \lra \cG^{(1)}$. For $\cG=\GG_{a(r)}$, $F$ may be viewed as an endomorphism. Moreover, we have $\ker F^s = \GG_{a(s)}$
and $F^s$ induces an isomorphism $\GG_{a(r)}/\GG_{a(s)} \cong \GG_{a(r-s)}$. Note that the induced homomorphism $F^s : k\GG_{a(r)} \lra k\GG_{a(r)}$ sends the canonical generator $u_i$ 
onto $u_{i-s}$. 

\bigskip

\begin{Theorem} \label{EIP2} Let $\cG$ be an infinitesimal group. If $M$ is a $\cG$-module of constant rank such that $M|_{\cG_2} \in \EIP(\cG_2)$, then $M$ has the equal images property. 
\end{Theorem}

\begin{proof} Suppose that $\cG$ has height $r$. For $r\le 2$, there is nothing to be shown, so we assume that $r\ge 3$.

We first consider the case, where $\cG=\GG_{a(r)}$. The map $F^2 : \GG_{a(r)} \lra \GG_{a(r)}$ is given by $x \mapsto x^{p^2}$. This implies that $k[(\GG_{a(r)})_2] = k[\GG_{a(r)}]/(t^{p^2})$ as well
as $k(\GG_{a(r)})_2 = k[u_0,u_1]$. Setting $\fe_r := \bigoplus_{i=0}^{r-1}ku_i$, we have $k\GG_{a(r)}=U_0(\fe_r)$ as well as $k(\GG_{a(r)})_2 = U_0(\fe_2)$. Our assertion thus follows by applying
Proposition \ref{EIP1} to the elements $u_0$ and $u_1$.

In the general case, we consider the inclusion $(\GG_{a(r)})_2 = \GG_{a(2)} \subseteq \GG_{a(r)}$, which corresponds to the inclusion $k\GG_{a(2)} = k[u_0,u_1] \subseteq k[u_0,\ldots, u_{r-1}] = 
k\GG_{a(r)}$. By assumption, there exist subspaces $V_1,\ldots, V_{p-1} \subseteq M$ such that
\[ (\ast) \ \ \ \ \ \ \ \ \ \im \psi(u_1)^j_M = V_j \ \ \ \ \ \ \ \ \ \forall \ \psi \in V(\cG_2)\smallsetminus\!\{\varepsilon\}, \, j \in \{1,\ldots,p\!-\!1\}.\]
We fix $j \in \{1,\ldots, p\!-\!1\}$ and let $\varphi \in V(\cG)\!\smallsetminus\!\{\varepsilon\}$. Then there exists $s \le r\!-\!1$ such that $\ker \varphi = \GG_{a(s)}$ and $\GG_{a(r)}/\ker\varphi \cong 
\GG_{a(r-s)}$. Our observations above now provide an injective homomorphism $\zeta : \GG_{a(r-s)} \lra \cG$ such that
\[ \varphi = \zeta \circ F^s.\]
We consider the $\GG_{a(r-s)}$-module $N:= \zeta^\ast(M)$. It follows that $N$ has constant rank $\rk(M)$.

If $s = r\!-\!1$, then $\zeta : \GG_{a(1)} \lra \cG$ satisfies
\[\varphi = \zeta \circ F^{r-1} = (\zeta \circ F)\circ F^{r-2}.\]
Accordingly, the map $\zeta\circ F : \GG_{a(2)} \lra \cG$ factors through $\cG_2$ and $\zeta \circ F \in V(\cG_2)\!\smallsetminus\!\{\varepsilon\}$. Consequently, ($\ast$) yields
\[ \im\varphi(u_{r-1})^j_M = \im (\zeta \circ F)(u_1)^j_M = V_j,\]
as desired. 

Alternatively, $s\le r\!-\!2$, so that $\GG_{a(2)}\subseteq \GG_{a(r-s)}$. If $\lambda : \GG_{a(2)} \lra \GG_{a(2)}$ is a non-trivial homomorphism, then $\zeta \circ \lambda \in V(\cG_2)\!\smallsetminus \!
\{\varepsilon\}$, and 
\[ \im \lambda(u_1)^j_{N} = \im (\zeta\circ \lambda)(u_1)^j_M = V_j.\]
Consequently, $N|_{\GG_{a(2)}} \in \EIP(\GG_{a(2)})$, and the first part of the proof ensures that $N \in \EIP(\GG_{a(r-s)})$. In particular, $\im \zeta(u_{r-s-1})^j_M = V_j$, whence 
$\im \varphi(u_{r-1})^j_M = \im (\zeta\circ F^s)(u_{r-1})^j_M = \im \zeta(u_{r-s-1})^j_M = V_j$. In view of Lemma \ref{FG1}, the module $M$ thus has the equal images property. \end{proof}

\bigskip 

\begin{Remarks} (1) Since the restriction $M|_{\GG_{a(1)}}$ of an arbitrary $\GG_{a(r)}$-module $M$ has the equal images property, the foregoing result may fail if $\cG_2$ is replaced by $\cG_1$.

(2) Since $\aurk(\GG_{a(r)})=r$, it follows from Corollary \ref{EA6} that $\deg^j(M)=\deg^j(M|_{\GG_{a(2)}})$ for every $\GG_{a(r)}$-module of constant $j$-rank. \end{Remarks}

\bigskip

\begin{Lemma} \label{EIP3} Let $\cU$ be an abelian unipotent group scheme, $M$ be a $\cU$-module. 
\begin{enumerate}
\item If $M|_{\cE_{\cU}}$ has constant $j$-rank, then $M$ has constant $j$-rank.
\item If $M|_{\cE_{\cU}} \in \EIP(\cE_{\cU})$, then $M \in \EIP(\cU)$. \end{enumerate} \end{Lemma}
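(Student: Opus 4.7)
The plan rests on the explicit commutative truncated-polynomial structure of $k\cU$ from Lemma \ref{EA1}. Write $k\cU \cong k[x_1, \ldots, x_n]/(x_1^{p^{a_1}}, \ldots, x_n^{p^{a_n}})$ with $n = \aurk(\cU)$, and set $y_i := x_i^{p^{a_i-1}}$. Since $y_i^p = 0$ and the $y_i$ are linearly independent modulo $\Rad^2(k\cU)$, the subalgebra $k[y_1, \ldots, y_n]$ is isomorphic to $k[Y_1, \ldots, Y_n]/(Y_i^p)$ and has dimension $p^n$; by Lemma \ref{EA1}(1),(3) it coincides with $k\cE_\cU$. Part (1) then follows quickly: by Lemma \ref{EA1}(2), every $\alpha \in \Pt(\cU)$ is equivalent to $\iota\circ\beta$ for some $\beta \in \Pt(\cE_\cU)$, and since $\rk(\gamma(t)^j_M)$ is invariant under equivalence of $p$-points (\cite[(3.8)]{FPe10}, as used in Lemma \ref{FG1}(1)), the value $\rk(\alpha(t)^j_M) = \rk(\beta(t)^j_M)$ is constant by hypothesis.

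For (2), applying (1) to each $\ell$ shows that $M$ has constant $\ell$-rank for every $\ell \in \{1,\ldots,p-1\}$. Define $V_\ell := \im \beta(t)^\ell_M$ for any $\beta \in \Pt(\cE_\cU)$. The first step is to prove $y^\gamma M \subseteq V_\ell$ for every multi-index $\gamma \in \NN_0^n$ with $|\gamma| = \ell$. For each $c = (c_1,\ldots,c_n) \in k^n \setminus \{0\}$, the element $\sum_i c_i y_i$ defines a $p$-point $\beta_c$ of $\cE_\cU$, since commutativity and Frobenius give $(\sum c_i y_i)^p = \sum c_i^p y_i^p = 0$, and $\sum c_i y_i \notin \Rad^2(k\cE_\cU)$. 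The EIP hypothesis gives $\beta_c(t)^\ell M = V_\ell$. Expanding $\beta_c(t)^\ell = \sum_{|\gamma|=\ell} \binom{\ell}{\gamma} c^\gamma y^\gamma$ via the multinomial theorem and treating the relation $\sum_{|\gamma|=\ell} \binom{\ell}{\gamma} c^\gamma y^\gamma m \in V_\ell$ as a polynomial identity in $c$ with coefficients in $M/V_\ell$, algebraic closedness of $k$ forces every coefficient $\binom{\ell}{\gamma} y^\gamma m$ to vanish in $M/V_\ell$. Since $\binom{\ell}{\gamma} \not\equiv 0 \pmod p$ for $|\gamma| = \ell \le p-1$, we obtain $y^\gamma M \subseteq V_\ell$.

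For the second step, let $\alpha \in \Pt(\cU)$ and put $a := \alpha(t)$, so that $a^p = 0$. Writing $a = \sum c_\gamma x^\gamma$ in the standard basis (with $0 \le \gamma_i < p^{a_i}$) and using Frobenius, $a^p = \sum c_\gamma^p x^{p\gamma}$; for this to vanish in $k\cU$, every $\gamma$ with $c_\gamma \ne 0$ must satisfy $p\gamma_i \ge p^{a_i}$ for some $i$, i.e., $\gamma_i \ge p^{a_i-1}$, so that $x^\gamma$ is divisible by $y_i$. Thus $a$ lies in the ideal $(y_1,\ldots,y_n)\cdot k\cU$, and we may write $a = \sum_i y_i f_i$ with $f_i \in k\cU$. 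Commutativity gives $a^\ell = \sum_{(i_1,\ldots,i_\ell)} y_{i_1}\cdots y_{i_\ell}\, f_{i_1}\cdots f_{i_\ell}$, so that $\im a^\ell_M \subseteq \sum_{|\gamma|=\ell} y^\gamma M \subseteq V_\ell$ by the first step. The constant $\ell$-rank of $M$ from (1) gives $\dim \im a^\ell_M = \dim V_\ell$, forcing equality.

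The main obstacle is this second step, specifically the factorization $a \in (y_1,\ldots,y_n)\cdot k\cU$ extracted from $a^p = 0$; it rests essentially on the commutativity of $k\cU$ (making Frobenius an algebra endomorphism) and on the explicit presentation with the ``top Frobenius'' generators $y_i$ of $k\cE_\cU$. The first step is routine once one observes that the linear family of $p$-points $c \mapsto \sum c_i y_i$ sweeps out all generic directions in $\Rad(k\cE_\cU)/\Rad^2(k\cE_\cU)$, converting the EIP hypothesis into a polynomial identity.
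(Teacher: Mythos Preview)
Your argument for (2) is essentially the paper's approach: both establish the ideal membership $\alpha(t)\in(y_1,\dots,y_n)k\cU$ from $\alpha(t)^p=0$ via Frobenius on the commutative truncated polynomial ring, and then use constant rank to upgrade the inclusion $\im\alpha(t)^\ell_M\subseteq V_\ell$ to an equality. (The paper treats only $\ell=1$ and then invokes commutativity to reach higher $\ell$; your direct multinomial treatment of all $\ell$ is a harmless variant.)

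The gap is in (1). Your claim that $\rk(\gamma(t)^j_M)$ is invariant under equivalence of $p$-points is false, and neither \cite[(3.8)]{FPe10} nor Lemma~\ref{FG1}(1) asserts it. For a concrete failure, take $k\cE=k[x,y]/(x^p,y^p)$ with $p\ge3$ and the $3$-dimensional module $M$ on basis $e_1,e_2,e_3$ with $x\cdot e_1=e_3$, $y\cdot e_1=e_2$, $y\cdot e_2=-e_3$, all other products zero: then $x$ and $x+y^2$ are equivalent $p$-points (same class in $\Rad/\Rad^2$), yet $\rk(x_M)=1$ while $\rk((x+y^2)_M)=0$. What Lemma~\ref{FG1}(1) actually uses is the more delicate fact that constancy on a \emph{particular} system of representatives (the one-parameter subgroups) suffices to detect constant $j$-rank---not that the rank is the same for all representatives of a class. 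The paper proves (1) differently: it invokes \cite[(4.5)]{FPe10} to characterize constant $j$-rank as emptiness of the closed non-maximal locus $\Pp^j(\cU)_M\subseteq\Pp(\cU)$, and then uses \cite[(3.4)]{FPe10} together with the homeomorphism $\iota_\ast$ of Lemma~\ref{EA1}(2) to identify $\Pp^j(\cU)_M$ with $\Pp^j(\cE_\cU)_{M|_{\cE_\cU}}=\emptyset$. Since your part (2) relies on (1) to obtain $\dim_k\im\alpha(t)^\ell_M=\dim_kV_\ell$, this gap propagates there as well.
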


\begin{proof} Thanks to Lemma \ref{EA1}(2), the canonical inclusion $\iota : \cE_{\cU} \lra \cU$ induces a bijection $\iota_\ast : \Pp(\cE_{\cU}) \lra \Pp(\cU)$. 

(1) Given $j \in \{1,\ldots,p\!-\!1\}$, we put 
\[ \Pp^j(\cU)_M:=\{x \in \Pp(\cU) \ ; \ \text{there is} \ \alpha \in x \ \text{such that} \ \rk(\alpha(t)^j_M)<\rk^j(M)\}.\]
Thanks to \cite[(4.5)]{FPe10}, $\Pp^j(\cU)_M$ is a closed subset of $\Pp(\cU)$ such that $M$ has constant $j$-rank if and only if $\Pp^j(\cU)_M=\emptyset$. The observation above in conjunction with
\cite[(3.4)]{FPe10} yields
\[ \Pp^j(\cU)_M = \Pp^j(\cE_{\cU})_{M|_{\cE_{\cU}}}.\]
By assumption, the latter set is empty, so that $M$ has constant $j$-rank.

(2) By assumption, there exists a vector space $V\subseteq M$ such that 
\[ \im \alpha(t)_M = V \ \ \ \ \text{for all} \ \alpha \in \Pt(\cE_{\cU}).\]
In particular, the module $M|_{\cE_{\cU}}$ has constant rank. By (1), this implies that $M$ is a $\cU$-module of constant rank. 

Using the notation of the proof of Lemma \ref{FG1}(2), we observe that the identity $\Pp(\cU)=\iota_\ast(\Pp(\cE_\cU))$, provides $p$-points $\alpha_1, \ldots, \alpha_s \in \Pt(\cE_\cU)$ such that
\[ (v_1^{p^{n_1-1}},\ldots, v_s^{p^{n_s-1}}) = (\alpha_1(t),\ldots, \alpha_s(t))\]
is the ideal of all elements of $u \in k\cU$ with $u^p=0$. It follows that 
\[ \im (v_j^{p^{n_j-1}})_M \subseteq V \ \ \ \ \ \text{for all} \ j \in \{1,\ldots,s\}.\]
Now let $\alpha \in \Pt(\cU)$ be a $p$-point. Then we have $\alpha(t) \in (v_1^{p^{n_1-1}},\ldots, v_s^{p^{n_s-1}})$, so that
$\im \alpha(t)_M \subseteq V$. As $M$ has constant rank, these spaces are in fact equal. Since $\cU$ is abelian, this implies that $M$ has the equal images property.  \end{proof}

\bigskip

\begin{Examples} Suppose that $U$ is an abelian $p$-group, $U(p):=\{u\in U \ ; \ u^p=1\}$.
\begin{enumerate}
\item Let $N \in \EIP(U(p))$ and consider $M := kU\!\otimes_{kU(p)}\!N$. Since $M|_{U(p)}\cong N^{[U:U(p)]}$, it follows from Lemma \ref{EIP3} that $M$ is an equal 
images module.
\item Contrary to $p$-elementary abelian groups, modules of constant rank $0$ may not be trivial. The $U$-module $M:=kU\!\otimes_{kU(p)}\!k$ is a trivial 
$U(p)$-module, so that Lemma \ref{EA1} implies that $M \in \EIP(U)$ has constant rank $0$. However, if $U(p) \subsetneq U$, then $M$ is not a trivial $U$-module.\end{enumerate} \end{Examples} 

\bigskip

\begin{Theorem} \label{EIP5} Let $\cG$ be a finite group scheme such that $\aurk(Z(\cG))\ge 2$. If $M$ is a $\cG$-module of constant rank such that $M|_{Z(\cG)} \in \EIP(Z(\cG))$, then 
$M \in \EIP(\cG)$. \end{Theorem}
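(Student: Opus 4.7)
The plan is to show that for any $p$-point $\alpha\in\Pt(\cG)$, the image $\im\alpha(t)^\ell_M$ agrees with the common value $V_\ell\subseteq M$ that is already singled out by the equal images property of $M|_{Z(\cG)}$. The device is to build, around each such $\alpha$, an abelian unipotent subgroup $\cU'\subseteq\cG$ containing both the abelian unipotent subgroup through which $\alpha$ factors and a central elementary abelian subgroup $\cV\subseteq Z(\cG)$ on which the EIP already holds, and then reduce through Lemma \ref{EIP3} and Lemma \ref{EA1} to the restricted-Lie-algebra situation governed by Theorem \ref{LASN2}.

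Concretely, first I extract such a $\cV\subseteq Z(\cG)$: since $\aurk(Z(\cG))\ge 2$, Lemma \ref{EA1} yields an elementary abelian $\cV\subseteq Z(\cG)$ with $\aurk(\cV)\ge 2$, and restricting $M|_{Z(\cG)}\in\EIP(Z(\cG))$ to $\cV$ gives $M|_\cV\in\EIP(\cV)$. Given $\alpha\in\Pt(\cG)$, condition (P2) supplies an abelian unipotent $\cU\subseteq\cG$ with $\im\alpha\subseteq k\cU$, and the centrality of $\cV$ makes $\cU':=\cU\cdot\cV$ an abelian unipotent subgroup scheme of $\cG$ containing both $\cU$ and $\cV$. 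By Lemma \ref{EIP3}(2) it suffices to prove $M|_{\cE_{\cU'}}\in\EIP(\cE_{\cU'})$: this lifts to $M|_{\cU'}\in\EIP(\cU')$, whence the common value of $\im\beta(t)^\ell_M$ for $\beta\in\Pt(\cU')$ must coincide with $V_\ell$ upon restriction to $\Pt(\cV)\subseteq\Pt(\cU')$, and in particular $\im\alpha(t)^\ell_M=V_\ell$.

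To establish $M|_{\cE_{\cU'}}\in\EIP(\cE_{\cU'})$, I identify $k\cE_{\cU'}\cong U_0(\fe_s)$ via Lemma \ref{EA1}(3) with $s=\aurk(\cE_{\cU'})\ge 2$, and---following the complement argument from the proof of Corollary \ref{EA6}---arrange the identification so that $\cV\subseteq\cE_{\cU'}$ corresponds to a subspace $\fe_0\subseteq\fe_s$ of dimension $\aurk(\cV)\ge 2$. The module $M|_{\cE_{\cU'}}$ inherits constant rank from $M$, and $M|_\cV\in\EIP(\cV)$ translates into the assertion that $\msim_M\colon\PP(\fe_s)\to\Gr_{\rk(M)}(M)$ is constant on the positive-dimensional subvariety $\PP(\fe_0)$. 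Since $\fe_s$ is $p$-trivial, its nullcone $V(\fe_s)=\fe_s$ is a linear subspace, so Theorem \ref{LASN2}(1) forces $\msim_M$ to be either injective or constant; constancy on $\PP(\fe_0)$ excludes injectivity, whence $\msim_M$ is constant on the whole of $\PP(\fe_s)$. Corollary \ref{LASN3}(2)---equivalently, Theorem \ref{LASN2}(2) applied with any nonzero $z\in\fe_0\subseteq\fe_s=C(\fe_s)$---then promotes this $\ell=1$ conclusion to the full equal images property in every degree $j\in\{1,\ldots,p-1\}$.

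The main subtlety will be the bookkeeping that reconciles three descriptions of the same module---as a $\cG$-module via $p$-points, as a $k\cU'$-module for the abelian unipotent $\cU'$, and as a $U_0(\fe_s)$-module for the restricted Lie algebra $\fe_s$---together with verifying that the embedding $\fe_0\hookrightarrow\fe_s$ can be chosen so that the $p$-points of $\cV$ appear as the ``linear-part'' $p$-points $\alpha_x$ with $x\in\fe_0$. Once these identifications are in place, the proof reduces cleanly to the rank-injectivity dichotomy of Theorem \ref{LASN2}(1) combined with the central-element induction underlying Theorem \ref{LASN2}(2).
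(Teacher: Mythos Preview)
Your proof is correct and follows essentially the same architecture as the paper: pass to an abelian unipotent subgroup containing both a central elementary abelian piece of rank $\ge 2$ and the abelian unipotent through which a given $p$-point factors, then reduce via Lemma~\ref{EIP3} to the elementary abelian case and invoke the injective-or-constant dichotomy. The only cosmetic differences are that the paper takes $\cU$ to be a \emph{maximal} abelian unipotent subgroup (so that $\cE_0\subseteq\cU$ automatically, rather than forming the product $\cU\cdot\cV$), and that the paper packages the key step through the degree formalism---invoking Corollary~\ref{EA6} to get $\deg(M|_{\cE_\cU})=\deg(M|_{\cE_0})=0$ and then concluding EIP---whereas you appeal to Theorem~\ref{LASN2} directly; these are the same argument at different levels of abstraction.
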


\begin{proof} Since $\aurk(Z(\cG)) \ge 2$, Lemma \ref{FG4} provides an elementary subgroup $\cE_0 \subseteq Z(\cG)$ such that $\aurk(\cE_0)=2$. We let $\alpha_{\cE_0} \in \Pt(\cE_0)$ be a 
$p$-point.

Let $\cU \subseteq \cG$ be a maximal abelian unipotent subgroup of $\cG$. Being an image of the local algebra $k(\cE_0\!\times\!\cU)\cong k\cE_0\!\otimes_k\!k\cU$,  the algebra $k(\cE_0\cU)$ is 
local, so that the group $\cE_0\cU$ is unipotent. As $\cE_0$ belongs to the center of $\cG$, the group $\cE_0\cU$ is also abelian. We thus have $\cE_0\cU=\cU$, whence $\cE_0\subseteq \cU$ and
$\cE_0 \subseteq \cE_\cU$. 

By our current assumption, the $\cE_\cU$-module $N:=M|_{\cE_\cU}$ has constant rank, so that Corollary \ref{EA6} yields $\deg(N)=\deg(N|_{\cE_0})$. Since $N|_{\cE_0}\in \EIP(\cE_0)$, we obtain
$\deg(N)=0$. As $\cE_\cU$ is abelian, it follows that $N \in \EIP(\cE_\cU)$, while Lemma \ref{EIP3} implies $M|_{\cU} \in \EIP(\cU)$. 

Now let $\alpha \in \Pt(\cG)$ be a $p$-point. Then there exists a maximal abelian unipotent group $\cU$ containing $\im \alpha$. By the above, we have
\[ \im \alpha(t)^j_M = \im \alpha_{\cE_0}(t)^j_M \ \ \ \ \text{for all} \ j \in \{1,\ldots,p\!-\!1\}.\]
As a result, the $\cG$-module $M$ has the equal images property. \end{proof}

\bigskip

\begin{Remark} Passage to dual modules provides analogous results for equal kernels modules. We leave the details to the interested reader.\end{Remark}

\bigskip 

\bigskip

\end{document}